\documentclass[a4paper,12pt]{amsart}
\usepackage{amsmath, amssymb, amsthm, xcolor}
\usepackage{tikz, tikz-cd}
\usepackage{enumerate}
\usepackage[colorlinks=true, citecolor=blue, linkcolor=blue, bookmarks=true]{hyperref}
\tikzcdset{scale cd/.style={every label/.append style={scale=#1},
    cells={nodes={scale=#1}}}}

\newcounter{counter}
\newtheorem{theorem}[counter]{Theorem}
\newtheorem{lemma}[counter]{Lemma}
\newtheorem{corollary}[counter]{Corollary}
\newtheorem{proposition}[counter]{Proposition}
\newtheorem*{theorem*}{Theorem}

\theoremstyle{definition}
\newtheorem{definition}[counter]{Definition}

\newtheorem{remark}[counter]{Remark}
\newtheorem{example}[counter]{Example}

\newcommand{\B}{\mathcal{B}}
\newcommand{\Z}{\mathcal{Z}}

\newcommand{\R}{\mathcal{R}}
\newcommand{\K}{\mathbb{K}}
\newcommand{\C}{\mathbb{C}}
\newcommand{\T}{\mathbb{T}}
\newcommand{\N}{\mathbb{N}}
\newcommand{\M}{\mathbb{M}}
\newcommand{\z}{\mathbb{Z}}

\newcommand{\Hilb}{\mathrm{Hilb}}
\newcommand{\Bim}{\mathrm{Bim}}
\newcommand{\tr}{\mathrm{tr}}
\newcommand{\Tr}{\mathrm{Tr}}
\newcommand{\Ad}{\mathrm{Ad}}
\newcommand{\Aut}{\mathrm{Aut}}
\newcommand{\Inn}{\mathrm{Inn}}
\newcommand{\Out}{\mathrm{Out}}
\newcommand{\Ker}{\mathrm{Ker}}

\newcommand{\id}{\operatorname{id}}

\newcommand{\Hom}{\operatorname{Hom}}
\newcommand{\Ext}{\operatorname{Ext}}

\newcommand{\alg}{\mathrm{alg}}
\newcommand{\Aff}{\operatorname{Aff}}

\title{Anomalous symmetries of classifiable C*-algebras}

\author{Samuel Evington}
\address{\hskip-\parindent Samuel Evington, Mathematical Institute, University of M{\"u}nster, Einsteinstrasse 62, 48149 M{\"u}nster, Germany}
\email{evington@uni-muenster.de}

\author{Sergio Gir{\'o}n Pacheco}
\address{\hskip-\parindent Sergio Gir{\'o}n Pacheco, Mathematical Institute, University of Oxford, Oxford, OX2 6GG, UK.}
\email{sergio.gironpacheco@sjc.ox.ac.uk}

\thanks{{\footnotesize SE was partially supported by the Deutsche Forschungsgemeinschaft (DFG, German Research Foundation) – Project-ID 427320536 – SFB 1442, as well as under Germany's Excellence Strategy EXC 2044 390685587, Mathematics M{\"u}nster: Dynamics–Geometry–Structure, and was also partially supported by ERC Advanced Grant 834267 - AMAREC, and by EPSRC grant EP/R025061/2; SGP was supported by the Ioan and Rosemary James Scholarship awarded by St John's College and the Mathematical Institute, University of Oxford.}}

\begin{document}

\begin{abstract}
	We study the $H^3$ invariant of a group homomorphism $\phi:G \rightarrow \Out(A)$, where $A$ is a classifiable C$^*$-algebra. We show the existence of an obstruction to possible $H^3$ invariants arising from considering the unitary algebraic $K_1$ group. In particular, we prove that when $A$ is the Jiang--Su algebra $\Z$ this invariant must vanish. We deduce that the unitary fusion categories $\Hilb(G, \omega)$ for non-trivial $\omega \in H^3(G, \T)$ cannot act on $\Z$.
\end{abstract}
\maketitle
\section*{Introduction}
\renewcommand*{\thecounter}{\Alph{counter}}
The hyperfinite II$_1$ factor $\R$ of Murray and von Neumann (\cite{MvN36}) was proven to be the unique (separably acting) amenable II$_1$ factor by Connes (\cite{CO76}). Connes also classified automorphisms $\alpha \in \Aut(\R)$ up to outer conjugacy (\cite{CO75, CO77}).
Building on Connes' work, Jones classified group actions $G \rightarrow \Aut(\R)$ and group homomorphisms $G \rightarrow \Out(\R)$ for finite groups $G$ (\cite{JON80}).
These classification results were subsequently generalised by Ocneanu to the case of amenable groups (\cite{OC06}), and by Popa to the setting of quantum symmetries with his classification of subfactors $N \subset \R$ with amenable standard invariant (\cite{PO94}). 

Motivated by the recent classification of infinite-dimensional, simple, separable, unital $\rm{C}^*$-algebras of finite nuclear dimension satisfying the universal coefficient theorem (\cite{Ki95,Phi00,GLN15,EGLN15,TWW17}), we are interested in the extent to which these results on classical and quantum symmetries of $\R$ carry over to the setting of $\rm{C}^*$-algebras with the aforementioned properties (hereinafter referred to as \emph{classifiable} $\rm{C}^*$-algebras). The uniformly hyperfinite (UHF) algebras $\bigotimes_{k \in \N} \M_{n_k}$ (\cite{GlimmUHF}) and the Jiang--Su algebra $\Z$ (\cite{Jiang-Su}) are of particular interest as they can be viewed as $\rm{C}^*$-analogues of $\R$.

In this paper, we will focus on the existence question. It is known that any countable discrete group $G$ acts faithfully on any classifiable $\rm{C}^*$-algebra $A$. (Indeed, by \cite[Corollary 7.3]{Wi12} classifiable C$^*$-algebras are $\Z$-stable. Thus, using \cite[Corollary 8.8]{Jiang-Su}, we may write $A \cong A \otimes \bigotimes_{g \in G} \Z$ and define a $G$-action by permuting the tensor factors.) However, we shall show that, even for types of quantum symmetry that are very closely related to group actions, the existence question is subtle and can have both positive and negative answers. 

The symmetries that we consider are the \emph{$\omega$-anomalous} $G$-actions (\cite{JO20}), where $G$ is a group and $\omega \in Z^3(G, \T)$ is a 3-cocyle; see Definition~\ref{def:anomalous-action}. 
Whereas a $G$-action is a group homomorphism $G \rightarrow \Aut(A)$, an \emph{$\omega$-anomalous} $G$-action comes from a group homomorphism $G \rightarrow \Out(A)$ and the 3-cocyle $\omega$, or more precisely its cohomology class, can be viewed as an obstruction to lifting this group homomorphism to a group action. (We review these cohomological computations in Section~\ref{sec:H3}.)

Anomalous actions arise naturally in the classification of group actions up to outer conjugacy because the first invariant of a $G$-action $\alpha:G \rightarrow \Aut(A)$ is the normal subgroup $K = \alpha^{-1}(\Inn(A))$ and there is an induced homomorphism $G/K \rightarrow \Out(A)$. Indeed, it follows from a result of Jones  (\cite{JON79}) that every countable discrete group $G$ has an $\omega$-anomalous action on $\R$ for every 3-cocyle $\omega \in Z^3(G,\T)$. Our main results show that this can fail in the $\rm{C}^*$-setting.

\begin{theorem}\label{thm:JiangSu}
    Let $G$ be a group. Suppose there exists an $\omega$-anomalous action of $G$ on the Jiang--Su algebra $\Z$. Then $[\omega] = 0$ in $H^3(G, \T)$.   
\end{theorem}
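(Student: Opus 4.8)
The plan is to exhibit $[\omega]$ as an element of the kernel of a natural homomorphism $H^3(G,\T)\to H^3\bigl(G,\overline{U}(A)\bigr)$, where $\overline{U}(A):=U(A)/\overline{[U(A),U(A)]}$ is the unitary algebraic $K_1$ group of $A$ (the closure being taken so that automorphisms of $A$ act on it), and then to compute this group and this homomorphism for $A=\Z$. First I would unwind the $H^3$ invariant: choosing for each $g\in G$ a lift $\alpha_g\in\Aut(\Z)$ of $\phi(g)\in\Out(\Z)$, there are unitaries $u_{g,h}\in U(\Z)$ with $\alpha_g\alpha_h=\Ad(u_{g,h})\circ\alpha_{gh}$, and comparing the two bracketings of $\alpha_g\alpha_h\alpha_k$ --- using that $\Z$ is simple and unital, so has trivial centre --- one obtains scalars $\omega(g,h,k)\in\T$ with
\[
    u_{g,h}\,u_{gh,k} \;=\; \omega(g,h,k)\,\alpha_g(u_{h,k})\,u_{g,hk},
\]
and $[\omega]\in H^3(G,\T)$ is the invariant. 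Next I would note that $\alpha$ induces a $G$-module structure on $\overline{U}(A)$: each $\alpha_g$ preserves $[U(A),U(A)]$ and is isometric, hence descends to $\overline{U}(A)$; the descent is independent of the chosen lift, since $\Ad(w)$ acts trivially on $\overline{U}(A)$ because $\Ad(w)(v)v^{-1}=[w,v]\in[U(A),U(A)]$; and $g\mapsto\overline{\alpha_g}$ is a homomorphism because $\overline{\Ad(u_{g,h})}=\id$. Moreover the inclusion of scalars $\iota\colon\T\to\overline{U}(A)$ is $G$-equivariant for the trivial action on $\T$.

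The obstruction step is then formal. Writing $[v]$ for the class of $v\in U(A)$ in the abelian group $\overline{U}(A)$ and passing to additive notation, applying $[\,\cdot\,]$ to the displayed associator identity gives
\[
    [u_{g,h}]+[u_{gh,k}] \;=\; \iota\bigl(\omega(g,h,k)\bigr) + g\cdot[u_{h,k}] + [u_{g,hk}],
\]
so the $\overline{U}(A)$-valued $3$-cocycle $\iota\circ\omega$ equals $-d\beta$ for the $2$-cochain $\beta(g,h):=[u_{g,h}]$, where $d$ is the inhomogeneous group-cohomology differential. Hence $\iota_*[\omega]=0$ in $H^3\bigl(G,\overline{U}(A)\bigr)$. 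This argument works verbatim for any unital simple --- in particular any classifiable --- $A$ admitting an anomalous action of $G$, and is the promised obstruction; it is the $H^3$-analogue of the fact that $\overline{U}(\R)=0$ for the hyperfinite $\mathrm{II}_1$ factor, which is precisely why every anomaly is realised on $\R$.

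It remains to prove that for $A=\Z$ the map $\iota_*$ is injective. Two facts about $\Z$ enter. First, the de la Harpe--Skandalis determinant $\widetilde{\Delta}_\tau\colon U(\Z)\to\mathbb{R}/\tau_*\bigl(K_0(\Z)\bigr)$ of the unique trace $\tau$: since $K_0(\Z)\cong\mathbb{Z}$ with generator $[1_{\Z}]$ and $\tau(1)=1$, the target is $\mathbb{R}/\mathbb{Z}\cong\T$, and $\widetilde{\Delta}_\tau(e^{2\pi i t}\cdot 1)=t+\mathbb{Z}$; being norm-continuous and annihilating commutators, $\widetilde{\Delta}_\tau$ factors through $\overline{U}(\Z)$ via a map restricting to the identity on scalars, so $\iota$ is injective. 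Second, $\Z$ is strongly self-absorbing, hence every automorphism of $\Z$ is approximately inner: for a unitary $v$ and $\varepsilon>0$ there is $w\in U(\Z)$ with $\|\alpha_g(v)-wvw^*\|<\varepsilon$, whence $\|\alpha_g(v)v^*-[w,v]\|<\varepsilon$ and $[\alpha_g(v)]=[v]$ in $\overline{U}(\Z)$, so the $G$-module $\overline{U}(\Z)$ has trivial action. Since $\T$ is divisible, hence injective in the category of abelian groups, the monomorphism $\iota$ admits a retraction $r\colon\overline{U}(\Z)\to\T$ of abelian groups, which is automatically $G$-equivariant because both sides are trivial $G$-modules; therefore $r_*\iota_*=\id$ on $H^3(G,\T)$, so $\iota_*$ is injective, and with the preceding paragraph this forces $[\omega]=0$.

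The main obstacle, I expect, is not the cocycle manipulation --- which is purely formal --- but keeping the various maps mutually compatible: the descended $G$-action on $\overline{U}(A)$, the factorisation of $\widetilde{\Delta}_\tau$ through $\overline{U}(\Z)$, and the $G$-equivariance of $\iota$ and of the retraction $r$, all need to be arranged so that the displayed coboundary really witnesses the vanishing of a class in \emph{equivariant} cohomology; in addition one must quote from Section~\ref{sec:H3} that $[\omega]$ does not depend on the choices of $\alpha_g$ and $u_{g,h}$. Alternatively, one can avoid approximate innerness altogether by identifying $\overline{U}(\Z)\cong\T$ outright, at the cost of invoking the equality $\Ker\widetilde{\Delta}_\tau=\overline{[U(\Z),U(\Z)]}$ for $\Z$.
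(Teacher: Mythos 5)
Your proposal is correct and is essentially the paper's argument: the formal coboundary computation is exactly Proposition \ref{HomtoAb}, and the scalar 3-cocycle is detected by the Skandalis--de la Harpe determinant of the unique (hence automatically invariant) trace, using $K_1(\Z)=0$ and $\tau_*(K_0(\Z))=\mathbb{Z}$. The only divergence is the endgame: the paper applies the coboundary trick directly with $q=\Delta_\tau$, whose restriction to the scalars is already an isomorphism $\T\to\mathbb{R}/\mathbb{Z}$ by \eqref{eqn:SdlH-scalars}, so it needs neither the splitting of $\iota$ via divisibility of $\T$ nor the approximate innerness of automorphisms of $\Z$; your route through injectivity of $\iota_*$ on $H^3$ is valid but invokes the strictly stronger input that the entire $G$-module $\overline{U}(\Z)$ carries the trivial action, where trace-invariance alone suffices.
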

\begin{theorem}\label{thm:UHF}
    Let $G$ be a finite group. Suppose there is an $\omega$-anomalous action of $G$ on the the UHF algebra $\bigotimes_{k \in \N} \M_{n_k}$. Let $r$ be the order of $[\omega]$ in $H^3(G, \T)$. Then $r^\infty$ divides the supernatural number $\prod_{k\in\N}n_k$.
\end{theorem}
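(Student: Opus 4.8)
The plan is to deduce Theorem~\ref{thm:UHF} from our $K_1^{\alg}$-obstruction together with a finite matrix-amplification reduction. Write $A=\bigotimes_{k}\M_{n_k}$ and $\mathfrak n=\prod_k n_k$, and let $\phi\colon G\to\Out(A)$ be the given $\omega$-anomalous action with $H^3$-invariant $[\omega]\in H^3(G,\T)$ of order $r$. The bound $r\mid|G|$ is immediate, since $H^3(G,\T)$ is annihilated by $|G|$ for finite $G$. For $r^\infty\mid\mathfrak n$ it suffices to prove the following: \emph{if a prime $p$ has finite $p$-adic valuation $m:=v_p(\mathfrak n)<\infty$ in $\mathfrak n$, then the $p$-primary component of $[\omega]$ is zero.} Granting this for every such $p$, every prime dividing $r$ must have infinite valuation in $\mathfrak n$, which is exactly $r^\infty\mid\mathfrak n$.

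Fix a prime $p$ with $m=v_p(\mathfrak n)<\infty$. Regrouping tensor factors, write $A\cong\M_{p^m}\otimes B$, where $B$ is a UHF algebra (allowing $B\cong\C$ when $A$ is finite-dimensional) whose supernatural number $\mathfrak n/p^m$ is prime to $p$. The crucial point is that the homomorphism $\Out(B)\to\Out(\M_{p^m}\otimes B)$, $[\beta]\mapsto[\id_{\M_{p^m}}\otimes\beta]$, is an isomorphism: this rests on the fact that any unital embedding $\M_{p^m}\hookrightarrow\M_{p^m}\otimes B$ is unitarily equivalent to $x\mapsto x\otimes1$, which follows from the classical uniqueness of finite-dimensional subalgebras of UHF algebras, using that automorphisms of $B$ fix $K_0(B)$ pointwise (the unique trace is automatically invariant and injective on $K_0$). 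Composing $\phi$ with the inverse isomorphism yields $\phi_B\colon G\to\Out(B)$, and transporting a set-theoretic lift of $\phi_B$ together with its implementing unitaries through $\id_{\M_{p^m}}\otimes(-)$ produces a lift of $\phi$ whose associator $3$-cocycle equals the one computed for $\phi_B$. By well-definedness of the $H^3$-invariant (Section~\ref{sec:H3}), the anomaly of $\phi_B$ is again $[\omega]$.

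Now apply the $K_1^{\alg}$-obstruction to $\phi_B$: the class $[\omega]$ maps to $0$ in $H^3\!\bigl(G;K_1^{\alg}(B)\bigr)$, equivalently $[\omega]$ lies in the image of the map $H^3(G;\mu_B)\to H^3(G;\T)$ induced by the inclusion $\mu_B\hookrightarrow\T$, where $\mu_B=\ker\!\bigl(\T\to K_1^{\alg}(B)\bigr)$ is the group of roots of unity whose order divides the supernatural number of $B$. Since $p$ does not divide that supernatural number, $\mu_B$ has trivial $p$-primary part; as $G$ is finite, so does $H^3(G;\mu_B)$, and hence the $p$-primary component of $[\omega]$ vanishes. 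Running this for every prime $p$ with $v_p(\mathfrak n)<\infty$ completes the argument. (The Jiang--Su case, Theorem~\ref{thm:JiangSu}, is the degenerate instance where no reduction is needed because the defect group $\mu$ is already trivial.)

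The main obstacle is the reduction step: establishing that cancelling a finite matrix amplification does not change the anomaly, i.e.\ that $\Out(\M_{p^m}\otimes B)\cong\Out(B)$ compatibly with the associated $3$-cocycle. Once this is in place, the $p$-primary vanishing of $[\omega]$ is a formal consequence of the $K_1^{\alg}$-obstruction together with the elementary observation that the defect group loses all of its $p$-torsion precisely when $p$ is absent from the supernatural number; the remaining manipulations with supernatural numbers and with primary decompositions of finite-group cohomology are routine.
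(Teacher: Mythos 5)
Your proof is correct, and it reaches the conclusion by a genuinely different reduction than ours, although the core obstruction is identical. Both arguments rest on the determinant/$K_1^{\alg}$ obstruction (Theorem \ref{thm:K1-obstruction}, Corollary \ref{cor:invariant-trace}) together with the long exact sequence for a coefficient sequence of the form $0\to Q(\mathfrak{m})/\z\to\T\to\mathbb{R}/Q(\mathfrak{m})\to 0$, which places $[\omega]$ in the image of $H^3(G,Q(\mathfrak{m})/\z)$ and thereby constrains its order by the relevant supernatural number. The difference is in the reduction: we first show $r\mid\mathfrak{n}$ and then bootstrap to $r^\infty\mid\mathfrak{n}$ by induction, repeatedly cutting to a corner $pAp$ of trace $r^{-k}$ via the explicit partial-isometry computation of Lemma \ref{lem:corners}; you argue prime by prime, splitting off the finite $p$-part as $A\cong\M_{p^m}\otimes B$ and transporting the $G$-kernel through the isomorphism $\Out(B)\cong\Out(\M_{p^m}\otimes B)$, after which the $p$-primary part of $[\omega]$ dies in one step because $Q(\mathfrak{n}/p^m)/\z$ has no $p$-torsion and $G$ is finite. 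Since $B$ is isomorphic to the corner $qAq$ for $q$ of trace $p^{-m}$, the two reductions land in the same algebra; what you buy is the elimination of the induction, at the cost of having to justify the $\Out$-isomorphism for matrix amplifications of UHF algebras (uniqueness up to unitary equivalence of unital copies of $\M_{p^m}$, which follows from cancellation and injectivity of $\tau_*$ on $K_0$, plus passage to the relative commutant $1\otimes B$), which plays the role of our Lemma \ref{lem:corners}. One small point of hygiene: Theorem \ref{thm:K1-obstruction} is stated for anomalous actions rather than $G$-kernels, so after identifying the $H^3$ invariant of $\phi_B$ with $[\omega]$ you should invoke Remark \ref{rem:changing-cocycle} (using $Z(B)=\C$) to lift $\phi_B$ to an honest $\omega$-anomalous action on $B$ before applying the obstruction; this is implicit in your write-up and costs nothing.
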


These results are obtained by using algebraic $K_1$, which we compute with the aid of the de la Harpe--Skandalis determinant (\cite{SdlH84}). 

In addition to these no-go theorems, we show how to construct examples of $\omega$-anomalous actions of finite groups on a variety of classifiable $\rm{C}^*$-algebras. In particular, we obtain a partial converse to Theorem \ref{thm:UHF}.

\begin{theorem}\label{thm:UHF-converse}
    Let $G$ be a finite group and let $\omega \in Z^3(G, \T)$ be any 3-cocycle. There exists an $\omega$-anomalous action of $G$ on the UHF algebra $\bigotimes_{k \in \N} \M_{|G|}$.
\end{theorem}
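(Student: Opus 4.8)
The plan is to build the $\omega$-anomalous action explicitly on $A=\bigotimes_{k\in\N}B(\ell^2(G))\cong\bigotimes_{k\in\N}\M_{|G|}$, using the left regular representation $\lambda\colon G\to U(\ell^2(G))$ and the cocycle $\omega$ to assemble a ``staircase'' of local unitaries (a matrix product unitary). After replacing $\omega$ by a cohomologous cocycle we may assume it normalised. Concretely, I must produce automorphisms $\alpha_g\in\Aut(A)$ with $\alpha_e=\id$ and unitaries $u_{g,h}\in U(A)$ satisfying $\alpha_g\alpha_h=\Ad(u_{g,h})\alpha_{gh}$ and $\alpha_g(u_{h,k})\,u_{g,hk}=\omega(g,h,k)\,u_{g,h}\,u_{gh,k}$; once the first relation holds, the second automatically holds with $\omega$ replaced by \emph{some} scalar $3$-cocycle $c$ (as $A$ has trivial centre), so the real content is to arrange $[c]=[\omega]$.

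First I would fix, for each $g\in G$, a two-site unitary $V_g\in B(\ell^2(G))^{\otimes 2}\subset A$ built from $\omega$ — combining the regular representation on a ``bond'' copy of $\ell^2(G)$ with a controlled phase whose matrix coefficients in the group basis are values of $\omega$ — and set $\alpha_g:=\Ad\bigl(\prod_{k\ge 1}V_g^{(k,k+1)}\bigr)$, the infinite right staircase placing $V_g$ on the bonds $(1,2),(2,3),\dots$. Although the displayed product is merely a formal unitary (not an element of $A$), conjugation by it is a genuine automorphism of $A$, since on an operator supported on the first $N$ sites only finitely many gates act, and likewise for the inverse staircase; with $\omega$ normalised one gets $\alpha_e=\id$.

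Next I would check the two relations. Composing the staircases for ``$g$ then $h$'' and moving $\lambda_h$ past the $V_g$'s turns it into the staircase for $gh$ up to a correction at each bond; a local identity among $V_g,V_h,V_{gh}$ equivalent to $d\omega=1$ makes all the bulk corrections cancel, leaving a single unitary $u_{g,h}\in A$ supported near the boundary site $k=1$, so that $\alpha_g\alpha_h=\Ad(u_{g,h})\alpha_{gh}$. Then evaluating $\alpha_g(u_{h,k})\,u_{g,hk}\,u_{gh,k}^{-1}\,u_{g,h}^{-1}$ is a finite computation localised near site $1$: it records the failure of associativity of the bond-space fusion data carried by the $V_g$, which by design is exactly $\omega(g,h,k)$ (swapping $\omega$ for $\bar\omega$ if an orientation convention requires). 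Finally, $\alpha_g$ displaces operators supported on arbitrarily remote sites, hence is not implemented by any unitary of $A$, so it is outer for every $g\ne e$; thus $g\mapsto[\alpha_g]$ is in fact an injective homomorphism $G\to\Out(A)$, and $(\alpha,u)$ is an $\omega$-anomalous action of $G$ on $\bigotimes_{k\in\N}\M_{|G|}$.

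The main obstacle is calibrating the gates $V_g$ precisely enough that the boundary defect cocycle $c$ from the last step is cohomologous to $\omega$ — not merely to \emph{some} class of $H^3(G,\T)$ — while keeping $u_{g,h}$ genuinely finitely supported; this is what forces the bond-space ``fusion'' isomorphisms $\ell^2(G)\otimes\ell^2(G)\to\ell^2(G)$, and the $3$-cochain measuring their non-associativity, to coincide with the associativity data of the unitary fusion category $\Hilb(G,\omega)$. An equivalent and perhaps tidier organisation of exactly this computation is categorical: one realises $\Hilb(G,\omega)$ as acting on the path algebra of the Bratteli diagram of the object $\bigoplus_{g\in G}\C_g$, which (by Elliott's classification of AF algebras, since its $K_0$ is $\z[1/|G|]$ with the canonical order unit) is isomorphic to $\bigotimes_{k\in\N}\M_{|G|}$, with the associator of $\Hilb(G,\omega)$ producing the anomaly $[\omega]$ by construction.
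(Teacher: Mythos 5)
Your proposal is a strategy outline rather than a proof, and the gap sits exactly where all the content of the theorem lies. You never write down the two-site gates $V_g$, and consequently the two claims that carry the whole argument --- that the bulk corrections in $\alpha_g\alpha_h$ cancel, leaving a boundary unitary $u_{g,h}\in A$, and that the resulting boundary defect $3$-cocycle is cohomologous to $\omega$ rather than to $0$ or to some other class --- are asserted ``by design'' without the design being exhibited. You acknowledge this yourself in the final paragraph (``the main obstacle is calibrating the gates $V_g$ precisely enough\ldots''), but that calibration \emph{is} the theorem: producing \emph{some} anomalous action on $\bigotimes_k\M_{|G|}$ with \emph{some} cocycle is easy (take $\omega=1$ and an honest action), so a proof must pin down the class. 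The staircase/matrix-product-unitary picture you describe is a genuine alternative route (it is essentially the SPT-edge construction from the physics literature, and a half-infinite version of the shift trick used in Theorem \ref{thm:Connes} for cyclic groups), and if the gates and the finite boundary computation were supplied it would give a more self-contained proof than the paper's; but as written it proves nothing. A secondary error: your outerness argument (``$\alpha_g$ displaces operators supported on arbitrarily remote sites'') is false for conjugation by a staircase of nearest-neighbour gates, which enlarges supports by at most one site; fortunately Definition \ref{def:anomalous-action} does not require outerness, so this clause can simply be deleted.

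For comparison, the paper takes a different and complete route: it first invokes a purely cohomological lemma (Lemma \ref{lem:cohomology-lemma}, going back to V.~Jones and refined by C.~Jones) producing an extension $\rho:G'\twoheadrightarrow G$ with finite abelian kernel $K$ of order a power of $|G|$ and a normalised $2$-cochain $c$ on $G'$ with $dc=\rho^*(\omega)$ and $c|_K=1$; then Theorem \ref{thm:coreys-existence} yields an $\omega$-anomalous $G$-action on $\bigl(\bigotimes_{j\in\N}\B(\ell^2(G'))\bigr)\rtimes K$; finally an explicit Bratteli-diagram computation identifies this crossed product with the UHF algebra of type $|G|^\infty$. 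All of the ``calibration'' you defer is absorbed there into the cohomological lemma and the already-proved Theorem \ref{thm:coreys-existence}. Your closing categorical remark (realising $\Hilb_\C(G,\omega)$ on the path algebra of the fusion graph of $\bigoplus_{g\in G}\C_g$, whose Bratteli diagram is the same complete bipartite one the paper computes) is indeed the alternative the paper alludes to via Ocneanu compactness, but it too is left as a one-sentence gesture; note that to return from a $\rm{C}^*$-tensor functor into $\Bim(A)$ to an anomalous action one still needs the images of the $\C_g$ to be of the form $A_{\theta_g}$, as in Proposition \ref{prop:tensor-functor} and Lemma \ref{lem:invertibles}.
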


The strategy for proving this result is to adapt Vaughan Jones' construction of $\omega$-anomalous actions of finite groups on $\R$ (\cite{JON80}) to the $\rm{C}^*$-setting. 
We make use of a technical simplification of Vaughan Jones' argument recently obtained by Corey Jones in his study of anomalous actions on commutative $\rm{C}^*$-algebras and their stabilisations (\cite{JO20}).
As a corollary of Theorem \ref{thm:UHF-converse}, we obtain $\omega$-anomalous actions of $G$ on the
Jacelon--Razak algebra $\mathcal{W}$ (\cite{Jac13}),
and $\omega$-anomalous actions of $G$ on the Cuntz algebras $\mathcal{O}_{|G|}$ and $\mathcal{O}_{2}$ (which can also be obtained using \cite{Iz93, Iz98}).

\medskip

We have chosen to present our main results in the language of anomalous actions and highlight the close connections with group actions. However, our results can also be formulated in the more general setting of \emph{$\rm{C}^*$-tensor categories} (see Section \ref{sec:UTC} for full definitions). This provides a common framework for studying group actions, anomalous group actions, and the more general notions of symmetry arising from subfactor theory  (\cite{Mug03,PO94,PIS18,LORO97,KAEV98,JOPE19}).

The model example of a $\rm{C}^*$-tensor category is $\Hilb_\C$, whose objects are finite-dimensional complex Hilbert spaces.
Loosely speaking, an action of a $\rm{C}^*$-tensor category $\mathcal{C}$ on an operator algebra $A$ is a representation of $\mathcal{C}$ as a collection of bimodules over $A$ such that tensor products in $\mathcal{C}$ correspond to tensor products of bimodules. This is formalised by the notion of a \emph{$\rm{C}^*$-tensor functor} into the $\rm{C}^*$-tensor category $\Bim(A)$, whose objects are Hilbert-$A$-bimodules (\cite{EKQR06, KW00, KWP04}).

Group actions give rise to actions of the $\rm{C}^*$-tensor category $\Hilb_\C(G)$, whose objects are $G$-graded Hilbert spaces, and anomalous actions give rise to actions of the closely related categories $\Hilb_\C(G,\omega)$. In the setting of Theorems \ref{thm:JiangSu} and \ref{thm:UHF}, we show that any action of $\Hilb_\C(G,\omega)$ comes from an anomalous action of $G$. We obtain the following corollaries.
\begin{corollary}\label{cor:UTC-JiangSu}
    Let $G$ be a group. There exists a $\rm{C}^*$-tensor functor $\Hilb_\C(G,\omega) \rightarrow \Bim(\Z)$ if and only if $[\omega] = 0$ in $H^3(G, \T).$
\end{corollary}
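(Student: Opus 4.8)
The strategy is to combine Theorem~\ref{thm:JiangSu} with the correspondence, developed in Section~\ref{sec:UTC}, between C$^*$-tensor functors $\Hilb_\C(G,\omega) \to \Bim(A)$ and $\omega$-anomalous actions of $G$ on $A$; the only input specific to $\Z$ will be Theorem~\ref{thm:JiangSu} itself. For the ``only if'' direction, let $F \colon \Hilb_\C(G,\omega) \to \Bim(\Z)$ be a C$^*$-tensor functor. Each simple object $\delta_g$ ($g \in G$) of $\Hilb_\C(G,\omega)$ is invertible, with $\delta_g \otimes \delta_h \cong \delta_{gh}$ and $\delta_e$ the tensor unit, so the bimodules $X_g := F(\delta_g)$ are invertible Hilbert $\Z$-bimodules satisfying $X_g \otimes_\Z X_h \cong X_{gh}$ and $X_e \cong {}_\Z\Z_\Z$. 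Since $\Z$ is simple and unital with $K_0(\Z) \cong \z$ and stable rank one, every invertible Hilbert $\Z$-bimodule is the graph of an automorphism, and two are isomorphic exactly when the automorphisms agree modulo $\Inn(\Z)$; hence $g \mapsto [X_g]$ is a homomorphism $\overline{\alpha} \colon G \to \Out(\Z)$. Unwinding the associativity coherence for the tensorator of $F$, against the $\omega$-twisted associator of $\Hilb_\C(G,\omega)$ and the canonical associators of $\Bim(\Z)$ on copies of the unit, identifies the $H^3$-obstruction to lifting $\overline{\alpha}$ to a genuine action with $[\omega]$; that is, $F$ encodes an $\omega$-anomalous action of $G$ on $\Z$ in the sense of Definition~\ref{def:anomalous-action}. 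Theorem~\ref{thm:JiangSu} now gives $[\omega] = 0$.

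For the converse, suppose $[\omega] = 0$ in $H^3(G,\T)$ and choose a $2$-cochain $\mu \in C^2(G,\T)$ with $\omega = d\mu$. I would produce the functor explicitly: put $F(\delta_g) := {}_\Z\Z_\Z$ for every $g \in G$, extend $F$ to all objects by additivity and to all morphisms (which are scalar-valued between simples) in the obvious way, and equip $F$ with the tensorator $J_{\delta_g,\delta_h} := \mu(g,h)\cdot\id_{{}_\Z\Z_\Z}$, again extended additively. As ${}_\Z\Z_\Z$ is the tensor unit of $\Bim(\Z)$, all associativity and unit constraints in the image are canonical, and the associativity coherence for $(F,J)$ reduces to the scalar identity $\mu(h,k)\,\mu(g,hk)\,\omega(g,h,k) = \mu(g,h)\,\mu(gh,k)$, which is exactly $\omega = d\mu$. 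Hence $(F,J)$ is the desired C$^*$-tensor functor. (Equivalently, $[\omega]=0$ makes $\Hilb_\C(G,\omega)$ unitarily monoidally equivalent to $\Hilb_\C(G)$, which acts on $\Z$ via the trivial $G$-action.)

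Almost all of the work lies in the correspondence of Section~\ref{sec:UTC}, and in particular in checking that a C$^*$-tensor functor out of $\Hilb_\C(G,\omega)$ records an $\omega$-anomalous action of $G$ with the correct cohomology class $[\omega]$, rather than only an $\omega'$-anomalous action for some $\omega'$ cohomologous to $\omega$. Two points need attention. First, one must confirm that the invertible bimodules in the image of $F$ are all graphs of automorphisms, so that $\overline{\alpha}$ really lands in $\Out(\Z)$; this is where the structure of $\Z$ enters, and one must keep in mind that although $\Z$ absorbs itself tensorially it has a comparatively small automorphism group. Second, since a C$^*$-tensor functor is determined only up to unitary monoidal natural isomorphism whereas an anomalous action comes with chosen implementing data, one needs the cohomological bookkeeping of Section~\ref{sec:H3} to see that altering these choices moves the representing $3$-cocycle only within its class. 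Granting the correspondence, the corollary is then immediate: ``only if'' is Theorem~\ref{thm:JiangSu} applied to the associated anomalous action, and ``if'' is the functor constructed above.
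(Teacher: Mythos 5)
Your argument for the ``only if'' direction is essentially the paper's: identify each $F(\C_g)$ with a bimodule $\Z_{\theta_g}$, extract an $\omega$-anomalous action (the paper's Proposition \ref{prop:tensor-functor}), and apply Theorem \ref{thm:JiangSu}. The one substantive step you assert rather than prove --- that every invertible Hilbert $\Z$-bimodule is of the form $\Z_\theta$ --- is exactly the paper's Lemma \ref{lem:invertibles}, proved by passing to the stabilisation, invoking Brown--Green--Rieffel to realise the self-Morita equivalence as $(\Z\otimes\K)_\theta$, and then using triviality of $\Aut(K_0(\Z),K_0(\Z)_+)$ together with cancellation to correct $\theta$ so that it fixes $1\otimes\K$; you name the right hypotheses and flag this as the crux, so I would count it as a correctly cited lemma rather than a gap, but be aware it is where the real work sits. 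For the ``if'' direction you give a direct construction (all $F(\delta_g)$ equal to the trivial bimodule, tensorator $\mu(g,h)$ with $\omega=d\mu$), whereas the paper routes through the existence of an $\omega$-anomalous action (Remark \ref{rem:changing-cocycle}) plus Proposition \ref{prop:tensor-functor}; your version is cleaner and, unlike the permutation-action realisation $\Z\cong\bigotimes_{g\in G}\Z$ mentioned in the paper, does not implicitly require $G$ countable. One cosmetic point: with the paper's conventions an $\omega$-anomalous action corresponds to a functor out of $\Hilb_\C(G,\bar\omega)$ (Remark \ref{rem:why-omega-bar}), so the cocycle you extract is really $\bar\omega$; this is harmless here since $[\omega]=0$ if and only if $[\bar\omega]=0$, but it is worth tracking if you ever need the class itself rather than its vanishing.
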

\begin{corollary}\label{cor:UTC-UHF}
    Let $G$ be a finite group. There exists a $\rm{C}^*$-tensor functor $\Hilb_\C(G,\omega) \rightarrow \Bim(\bigotimes_{k \in \N} \M_{n_k})$ only if, letting $r$ be the order of $[\omega]$ in $H^3(G, \T)$, $r^\infty$ divides the supernatural number $\prod_{k\in\N}n_k$.
\end{corollary}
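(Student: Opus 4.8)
The plan is to reduce Corollary~\ref{cor:UTC-UHF} to Theorem~\ref{thm:UHF} by means of the dictionary, announced in the introduction, between $\rm{C}^*$-tensor functors out of $\Hilb_\C(G,\omega)$ and $\omega$-anomalous actions of $G$. The essential feature of $\Hilb_\C(G,\omega)$ is that it is pointed: its simple objects are indexed by $g \in G$, each is $\otimes$-invertible with inverse $g^{-1}$, every object is a finite direct sum of these, and the associativity constraint is governed by (the class of) $\omega$. So a $\rm{C}^*$-tensor functor $F \colon \Hilb_\C(G,\omega) \to \Bim(A)$, with $A = \bigotimes_{k \in \N} \M_{n_k}$, is determined by what it does on the invertibles together with its coherence data.

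First I would observe that, because $F$ is monoidal and $g \otimes g^{-1} \cong 1$ in $\Hilb_\C(G,\omega)$, each $F(g)$ is an invertible object of $\Bim(A)$; in particular $\mathrm{End}_{\Bim(A)}(F(g)) \cong \mathrm{End}_{\Bim(A)}(A) = Z(A) = \C$, so $F(g)$ is simple. Since $A$ is a UHF algebra — in particular simple, unital, with cancellation and $[1_A]$ not a zero divisor in $K_0(A)$ — every invertible Hilbert $A$-bimodule is isomorphic to ${}_{\id}A_{\alpha}$ for an automorphism $\alpha$ of $A$ that is unique modulo $\Inn(A)$; that is, $\mathrm{Pic}(A) \cong \Out(A)$. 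This yields a group homomorphism $\bar\alpha \colon G \to \Out(A)$. Choosing set-theoretic lifts $\alpha_g \in \Aut(A)$, the tensorators $F(g) \otimes_A F(h) \xrightarrow{\sim} F(gh)$ of $F$ are implemented by unitaries $u_{g,h} \in U(A)$ with $\alpha_g \alpha_h = \Ad(u_{g,h})\,\alpha_{gh}$, and the pentagon/associativity coherence for $F$ — comparing the associator $\omega$ of $\Hilb_\C(G,\omega)$ with the (essentially trivial) associator of $\otimes_A$ on $\Bim(A)$ — forces the resulting $3$-cocycle $(g,h,k) \mapsto \alpha_g(u_{h,k})\,u_{g,hk}\,u_{gh,k}^{*}\,u_{g,h}^{*}$ to be cohomologous to $\omega^{\pm 1}$. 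Hence $F$ gives rise to an $\omega$-anomalous action of $G$ on $A$ in the sense of Definition~\ref{def:anomalous-action}, and since $\ord[\omega^{\pm 1}] = \ord[\omega] = r$, Theorem~\ref{thm:UHF} gives that $r \mid |G|$ and $r^\infty \mid \prod_{k \in \N} n_k$, as required. (The same reduction, with Theorem~\ref{thm:JiangSu} in place of Theorem~\ref{thm:UHF} and $\mathrm{Pic}(\Z) \cong \Out(\Z)$, gives the forward implication of Corollary~\ref{cor:UTC-JiangSu}.)

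The step I expect to be the main obstacle is the cohomological bookkeeping in that last move: one must check that the unitarity of $F$ allows the tensorators to be chosen unitary, fix conventions for the unit constraints and for the associativity isomorphisms of $\otimes_A$ in $\Bim(A)$, and then verify that after all this the only residual scalar in the coherence diagram is precisely $[\omega]$, with no stray coboundary and no inversion ambiguity that could affect the order. This is a bounded but fiddly diagram chase, most cleanly packaged once and for all as a proposition in Section~\ref{sec:UTC} establishing an equivalence between $\rm{C}^*$-tensor functors $\Hilb_\C(G,\omega) \to \Bim(A)$ and $\omega$-anomalous $G$-actions on $A$, valid whenever $A$ is simple, unital and satisfies $\mathrm{Pic}(A) \cong \Out(A)$; granting it, Corollary~\ref{cor:UTC-UHF} is immediate. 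Note that only the ``only if'' direction is needed, so no construction of functors is required here.
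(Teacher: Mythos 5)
Your overall strategy---convert the tensor functor into an anomalous action by showing each $F(\C_g)$ is of the form $A_{\theta_g}$, then quote Theorem \ref{thm:UHF}---is the paper's strategy, and the coherence bookkeeping you defer is exactly what the paper packages as Proposition \ref{prop:tensor-functor} (including the $\omega$ versus $\bar{\omega}$ inversion, which as you note is harmless for the order). However, there is a genuine gap at the key step: the claim that $\mathrm{Pic}(A)\cong\Out(A)$ for a UHF algebra $A$ is false. An invertible $A$-$A$-bimodule induces an automorphism of the ordered group $(K_0(A),K_0(A)_+)$ which need not fix $[1_A]$. For $A=\bigotimes_{k}\M_{n_k}$ with $K_0(A)\cong Q(\mathfrak{n})\subseteq\mathbb{Q}$, multiplication by any positive $q$ with $q,q^{-1}\in Q(\mathfrak{n})$ is such an order-automorphism (e.g.\ $q=2$ when $\mathfrak{n}=2^\infty$), and it is realised by the imprimitivity bimodule $pA$ between $A$ and $pAp$ for a projection $p$ of trace $q^{-1}$, composed with an isomorphism $pAp\cong A$. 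That bimodule is invertible but not isomorphic to any $A_\theta$, since automorphisms of a UHF algebra act trivially on $K_0$. Your side condition that $[1_A]$ is not a zero divisor in $K_0(A)$ does not repair this.

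The repair is the one made in the paper's Lemma \ref{lem:invertibles}: one only needs that invertible bimodules of \emph{finite order} in $\Bim(A)$ are of the form $A_\theta$, and this holds because the only finite-order automorphism of the group $Q(\mathfrak{n})$ is multiplication by $-1$, which does not preserve positivity, so $\Aut(K_0(A),K_0(A)_+)$ is torsion-free. The finite order of each $F(\C_g)$ is supplied by the finiteness of $G$---which your reduction never uses, and that should have been a warning sign. Your parenthetical about Corollary \ref{cor:UTC-JiangSu} is fine as stated, since $\Aut(K_0(\Z),K_0(\Z)_+)$ genuinely is trivial and there $\mathrm{Pic}(\Z)\cong\Out(\Z)$ does hold.
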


These results show that there are additional obstructions to the existence of actions of C$^*$-tensor categories on C$^*$-algebras beyond restrictions on the dimensions of simple objects (\cite{KWP04}).

In the case of Theorem \ref{thm:UHF-converse}, the existence of a $\rm{C}^*$-tensor functor $\Hilb_\C(G,\omega) \rightarrow \Bim(\bigotimes_{k \in \N} \M_{|G|})$ is an immediate corollary. In fact, in the general setting of $\rm{C}^*$-tensor categories an alternative strategy for proving Theorem \ref{thm:UHF-converse} becomes apparent, based on an adaptation of the Ocneanu compactness argument to the $\rm{C}^*$-setting (\cite{OC88}).

\subsection*{Structure of the paper}
In Section \ref{sec:prelims}, we fix our notation for the paper and review preliminary material on group cohomology and algebraic $K_1$. In Section \ref{sec:anomalous-actions}, we recall the definition of anomalous actions and the related notion of a $G$-kernel. In Section \ref{sec:obstructions}, we prove Theorems \ref{thm:JiangSu} and \ref{thm:UHF}. In Section \ref{sec:existence}, we discuss existence results for anomalous actions and prove Theorem \ref{thm:UHF-converse}. In Section \ref{sec:UTC}, we review C$^*$-tensor categories and prove Corollaries \ref{cor:UTC-JiangSu} and \ref{cor:UTC-UHF}.

\subsection*{Acknowledgements} This paper is based on research started during the OSU Summer Research Program on Quantum Symmetries (supported by National Science Foundation grant DMS 1654159). The authors would also like to thank Roberto Hernandez Palomares, Corey Jones, David Penneys and Stuart White for contributing to discussions that have informed this paper.

\numberwithin{counter}{section}
\section{Preliminaries}\label{sec:prelims}

\subsection{Notation}
For a non-unital $\rm{C}^*$-algebra $A$, we write $A^\sim$ for the minimal unitisation and $M(A)$ for the multiplier algebra. 

Let $A$ be a unital $\rm{C}^*$-algebra. 
We write $U(A)$ for the unitary group of $A$, and we write $\Aut(A)$ for the group of automorphisms of $A$.
Given $u \in U(A)$, we write $\Ad(u)$ for the inner automorphism $a \mapsto uau^*$.
The normal subgroup of $\Aut(A)$ consisting of all inner automorphisms is denoted $\Inn(A)$, and we write $\Out(A)$ for the quotient $\Aut(A)/\Inn(A)$.  

We write $K_0(A)$ and $K_1(A)$ for the topological $K$-groups of $A$. 
We denote the tracial state space by $T(A)$, and we write $\Aff(T(A))$ for the space of continuous affine functions $T(A) \rightarrow \mathbb{R}$.
We recall that the \emph{pairing map} $\rho_A:K_0(A) \rightarrow \Aff(T(A))$ is characterised by $\rho_A([e])(\tau) = (\tau \otimes \Tr_n)(e)$ for projections $e \in A \otimes \M_n$. 

We set $U_\infty(A) = \bigcup_{n \in \N} U_n(A)$, where $U_n(A)$ denotes the unitary group of $\M_n(A)$ and 
we view $U_n(A) \subseteq U_{n+1}(A)$ via the embedding $u \mapsto u \oplus 1_A$. 
We endow $U_\infty(A)$ with the direct limit topology, i.e.\ $V \subseteq U_\infty(A)$ is open if and only if $V \cap U_n(A) \subseteq U_n(A)$ is open for all $n \in \N$. 

Strictly speaking, $U_\infty(A)$ endowed with this topology may not be a topological group as multiplication may not be jointly continuous (see \cite{TSH98}). 
However, this naive choice of topology will suffice for our purposes because every compact subset $K \subseteq U_\infty(A)$ lies in $U_N(A)$ for some $N \in \N$ by \cite[Lemma 1.7]{Gl05}. In particular, continuous paths $[0,1] \rightarrow U_\infty(A)$ and homotopies $[0,1]^2 \rightarrow U_\infty(A)$ will factor through $U_N(A)$ for some $N \in \N$. Consequently, $K_1(A)$ can be identified with the space of path components of $U_\infty(A)$, and $K_0(A)$ can be identified with the fundamental group of $U_\infty(A)$ with the identity as the basepoint, using Bott periodicity.

We write $U_n^{(0)}(A)$ for the path component of the identity in $U_n(A)$ for $n \in \N \cup \{\infty\}$. Note that  $U_\infty^{(0)}(A) = \bigcup_{n\in\N} U_n^{(0)}(A)$.

\subsection{Group cohomology}
We recall the definition of the cohomology groups $H^n(G, M)$ where $G$ is a group and $M$ is a $\mathbb{Z}G$-module. Further information can be found in \cite{BR12}.

Fix $G$ and $M$. An \emph{$n$-cochain} is a function $\phi:G^n \rightarrow M$. The set of all $n$-cochains $C^n(G,M)$ inherits a $\mathbb{Z}G$-module structure from $M$.
The coboundary maps $d^{n+1}:C^n(G,M) \rightarrow C^{n+1}(G,M)$ are defined by
\begin{align}\label{eqn:cohomology-def}
    d^{n+1}(\phi)(g_1,g_2,\ldots,g_{n+1}) &=
    g_1\phi(g_2,\ldots,g_{n+1}) \\
    &+ \sum_{i=1}^{n} (-1)^i\phi(g_1,\ldots,g_ig_{i+1},\ldots, g_{n+1}) \nonumber\\
    &+ (-1)^{n+1}\phi(g_1,\ldots, g_n). \nonumber
\end{align}
An \emph{n-cocycle} is an $n$-cochain $\phi$ satisfying $d^{n+1}\phi = 0$; the set of all $n$-cocyles form an abelian group $Z^n(G,M)$ under addition. An \emph{n-coboundary} is an $n$-cochain $\phi$ satisfying $\phi = d^{n}\psi$ for some $(n-1)$-cochain $\phi$; the set of all $n$-coboundaries forms an abelian group $B^n(G,M)$ under addition.

Since $d^{n+1} \circ d^n = 0$, it follows that $B^n(G,M) \subseteq Z^n(G,M)$. The quotient group $H^n(G,M) = Z^n(G,M)/B^n(G,M)$ is the $n$-th \emph{cohomology group} of $G$ with \emph{coefficients} in $M$.

We warn the reader that the formula \eqref{eqn:cohomology-def} assumes that $M$ is written additively. In multiplicative notation, the right hand side would be 
\begin{equation*}\label{eqn:cohomology-def-2}
    \alpha_{g_1}(\phi(g_2,\ldots,g_{n+1}))\cdot \prod_{i=1}^{n} \phi(g_1,\ldots,g_ig_{i+1},\ldots, g_{n+1})^{\epsilon_i} \cdot \phi(g_1,\ldots, g_n)^{\epsilon_{n+1}}, \nonumber
\end{equation*}
where $\epsilon_i = (-1)^i$ and where $\alpha_g$ denotes the action of $g$ on $M$.

In this paper, $M$ will often be an abelian group endowed with a trivial $\mathbb{Z}G$-module structure, where $gm = m$  (additive notation) or $\alpha_g(m) = m$ (multiplicative notation) for all $g \in G$ and $m \in M$. However, for some of our more general results, we shall need coefficient modules with non-trivial $G$-actions.

\subsection{The de la Harpe--Skandalis determinants}\label{sec:SdlH}

Let $A$ be a unital $\rm{C}^*$-algebra. Given a tracial state $\tau \in T(A)$, de la Harpe and Skandalis defined a group homomorphism 
\begin{equation*}
    \Delta_\tau: U_\infty^{(0)}(A) \rightarrow \frac{\mathbb{R}}{\tau_*(K_0(A))},
\end{equation*}
where $\tau_*$ is the induced state on $K_0(A)$. We outline the construction below, referring the reader to \cite{SdlH84} or \cite{dlH13} for the full details.

Suppose $u \in U^{(0)}_n(A)$. Let $\xi:[0,1]\rightarrow U_n(A)$ be a smooth path with $\xi(0) = 1_A$ and $\xi(1) = u$.\footnote{By \cite[Proposition 2.1.6]{IntroK} for example, $u$ can be written as a product of exponentials, so there exists a path of the form $\xi(t) = \exp(ith_1)\cdots\exp(ith_r)$, which is clearly $C^\infty$-smooth.} Then 
\begin{align}\label{eqn:SdlH-def}
    \Delta_\tau(u) = \frac{1}{2\pi i}\int_0^1 (\tau \otimes \Tr_{n})(\xi'(t)\xi(t)^{-1}) \, \mathrm{d}t + \tau_*(K_0(A)).
\end{align}
It is not immediately clear that $\Delta_\tau$ is independent of the choice of path $\xi$. However, de la Harpe and Skandalis prove this in \cite[Lemme 1]{SdlH84}. The main ingredient in their argument is the Bott periodicity theorem (see for example \cite[Section 9]{Bl98}), which says that up to homotopy loops in $U_\infty(A)$ come from elements of $K_0(A)$. A direct computation using the product rule
\begin{align}
    (\xi_1\xi_2)'(t) = \xi_1'(t)\xi_2(t) + \xi_1(t)\xi_2'(t)
\end{align}
and the trace identity for $\tau$, shows that $\Delta_\tau$ is a group homomorphism.
We draw special attention to the case where $u = \exp(2\pi ih)$ for some self-adjoint $h \in M_n(A)$. 
Taking $\xi$ to be the path $\xi(t) = \exp(2\pi ith)$, direct computation gives $\Delta_\tau(u) = \tau(h) + \tau_*(K_0(A))$. 
In particular, for $x \in \mathbb{R}$, we have 
\begin{equation}\label{eqn:SdlH-scalars}
    \Delta_\tau(e^{2\pi i x} 1_{A}) = x + \tau_*(K_0(A)).    
\end{equation}

 Combining the de la Harpe--Skandalis determinants $\Delta_\tau$ for all $\tau \in T(A)$, we obtain a group homomorphism
    \begin{equation}
        \Delta_A:U^{(0)}_\infty(A) \rightarrow \frac{\Aff(T(A))}{\rho_A(K_0(A))},
    \end{equation}
where $\Aff(T(A))$ denotes the real-valued, continuous affine functions on the trace space of $A$ and $\rho_A$ is the paring map. We call this map the \emph{universal} de la Harpe--Skandalis determinant.

\begin{remark}
In \cite{SdlH84}, de la Harpe and Skandalis carry out their construction with any continuous linear map $\tau:A \rightarrow E$ into a Banach space $E$ satisfying the trace identity $\tau(ab) = \tau(ba)$. The universal de la Harpe--Skandalis determinant can also be obtained by this method starting with the universal trace $\Tr:A \rightarrow \Aff_\C(T(A))$. By \cite[Proposition 2.7]{CUPE79}, we have $\Ker(\Tr) = \overline{\mathrm{span}}\{ab-ba: a,b \in A\}$, and the universal trace can alternatively be viewed as taking the quotient of $A$ by this closed subspace. 
\end{remark}

\subsection{Unitary algebraic \texorpdfstring{$K_1$}{K1}} \label{sec:algebraic-K1}

Let $A$ be a unital $\rm{C}^*$-algebra. Then the \emph{unitary algebraic $K_1$-group} of $A$ is defined as the abelianisation of $U_\infty(A)$, i.e.\
\begin{align}
    K_1^{\alg} (A) = \frac{U_\infty(A)}{[U_\infty(A),U_\infty(A)]}.
\end{align}

A couple of variants of the algebraic $K_1$-group are possible. Firstly, one can replace unitary groups with general linear groups throughout (see for example \cite{HIG88}). Secondly, one can define \emph{Hausdorffised} unitary algebraic $K_1$ by replacing the commutator subgroup $[U_\infty(A),U_\infty(A)]$ by its closure in the direct limit topology on $U_\infty(A)$ (see for example \cite{Th95}). We will not use these variants in our main results but may refer to them in some remarks.

There is a canonical surjection $\kappa_A: K_1^{\alg}(A) \twoheadrightarrow K_1(A)$, which is typically not injective. For example, $K_1(\C) = 0$ as the unitary groups $U_n(\C)$ are path connected, whereas  $K_1^{\alg}(\C) \twoheadrightarrow \T$ as the determinant is well-defined on $K_1^{\alg}(\C)$-classes. (In fact, by an elementary diagonalisation argument, every unitary matrix with determinant one is a product of commutators, so $K_1^{\alg}(\C) \cong \T$.)

More generally, de la Harpe--Skandalis determinants can be used to extract some information about the kernel of the map $\kappa_A$. Using a result of Ng and Robert (\cite{KernelDet}), the kernel of $\kappa_A$ is completely determined by this information when $A$ is a classifiable $\rm{C}^*$-algebra.

\begin{theorem}[{cf.\ \cite[Theorem 1.1]{KernelDet}}]\label{thm:classifiable-K1-alg}
    Let $A$ be a unital, simple, separable, exact and $\Z$-stable $\rm{C}^*$-algebra with $T(A) \neq \emptyset$. There is a short exact sequence 
    \begin{equation}\label{eqn:alg-K1-ses}
        0 \rightarrow \frac{\Aff(T(A))}{\rho_A(K_0(A))} \xrightarrow{(\bar{\Delta}_A)^{-1}} K^{\alg}_1(A) \xrightarrow{\kappa_A} K_1(A) \rightarrow 0. 
    \end{equation}
\end{theorem}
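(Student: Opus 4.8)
The plan is to assemble the short exact sequence \eqref{eqn:alg-K1-ses} from three ingredients: the surjectivity of $\kappa_A$, the identification of $\Ker(\kappa_A)$ with the image of the universal Skandalis--de la Harpe determinant, and the computation of that image as all of $\Aff(T(A))/\rho_A(K_0(A))$. First I would recall that $\kappa_A$ is surjective essentially by definition, since $K_1(A) \cong \pi_0(U_\infty(A))$ is already a quotient of $U_\infty(A)/[U_\infty(A),U_\infty(A)]$ — every element of $K_1(A)$ is represented by a unitary in some $U_n(A)$, and $U_\infty^{(0)}(A) = \bigcup_n U_n^{(0)}(A)$ maps onto the identity component. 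So $\Ker(\kappa_A)$ is precisely the image in $K_1^{\alg}(A)$ of $U_\infty^{(0)}(A)$, i.e.\ $U_\infty^{(0)}(A)/([U_\infty(A),U_\infty(A)] \cap U_\infty^{(0)}(A))$.

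The key structural input is the theorem of Ng and Robert (\cite{KernelDet}): for a unital, simple, separable, exact, $\Z$-stable C$^*$-algebra with $T(A) \neq \emptyset$, the commutator subgroup $[U_\infty(A), U_\infty(A)]$ is already closed in the direct limit topology and coincides with the kernel of the universal Skandalis--de la Harpe determinant $\Delta_A$ restricted to $U_\infty^{(0)}(A)$; equivalently, Hausdorffised and ordinary unitary algebraic $K_1$ agree and the de la Harpe--Skandalis determinant separates the identity component modulo commutators. Granting this, $\Delta_A$ descends to an injective homomorphism $\bar\Delta_A$ from $U_\infty^{(0)}(A)/[U_\infty(A),U_\infty(A)] = \Ker(\kappa_A)$ into $\Aff(T(A))/\rho_A(K_0(A))$, and its inverse on the image gives the map labelled $(\bar\Delta_A)^{-1}$ in the statement. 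It remains to check surjectivity of $\bar\Delta_A$ onto $\Aff(T(A))/\rho_A(K_0(A))$. For this I would use that $A$ has strict comparison / the requisite regularity so that $\Aff(T(A))$ is generated (modulo the image of $K_0$) by traces of self-adjoint elements $h \in M_n(A)$; the computation quoted just before the theorem, $\Delta_\tau(\exp(2\pi i h)) = \tau(h) + \tau_*(K_0(A))$ for all $\tau$, shows every such affine function $\Tr(h)$ lies in the image of $\Delta_A$. Hence $\bar\Delta_A$ is onto and we get the exact sequence.

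The main obstacle is the middle step — that $[U_\infty(A),U_\infty(A)]$ equals $\Ker(\Delta_A|_{U_\infty^{(0)}})$ and is closed — which is exactly the deep content of Ng--Robert and which I am permitted to cite; everything else is bookkeeping. A secondary point requiring a little care is exactness at the middle term, i.e.\ that the image of $(\bar\Delta_A)^{-1}$ is precisely $\Ker(\kappa_A)$ and not something smaller: this is immediate once one knows $\Ker(\kappa_A) = U_\infty^{(0)}(A)/[U_\infty(A),U_\infty(A)]$ and that $\Delta_A$ is injective on this quotient with full image, since then $(\bar\Delta_A)^{-1}$ is literally the inverse isomorphism onto that kernel. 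One should also double-check that the direct-limit topology subtleties flagged in the preliminaries (multiplication not jointly continuous) do not interfere — they do not, because closedness of the commutator subgroup and the path-lifting arguments only ever involve compact subsets, which by \cite[Lemma 1.7]{Gl05} sit inside some $U_N(A)$ where everything is a genuine topological group.
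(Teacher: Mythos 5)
Your proposal follows essentially the same route as the paper: $\kappa_A$ is surjective by definition of $K_1$ as $\pi_0(U_\infty(A))$, the Ng--Robert theorem supplies the hard inclusion identifying the kernel of $\Delta_A$ with the commutator subgroup, and surjectivity of $\Delta_A$ comes from exponentials of self-adjoint lifts of affine functions. Two small points where the paper is more careful. First, the paper records the identity $[u,v]\oplus 1_A\oplus 1_A=[u\oplus u^*\oplus 1_A,\; v\oplus 1_A\oplus v^*]$, which (since $u\oplus u^*$ is homotopic to the identity) shows $[U_\infty(A),U_\infty(A)]=[U_\infty^{(0)}(A),U_\infty^{(0)}(A)]$. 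This is what guarantees that $\Delta_A$ --- defined only on $U_\infty^{(0)}(A)$, with abelian target --- annihilates commutators $[u,v]$ of unitaries that are not themselves in the identity component, and hence genuinely descends to $\Ker(\kappa_A)=U_\infty^{(0)}(A)/[U_\infty(A),U_\infty(A)]$; your write-up buries this step inside the citation of \cite{KernelDet}, whose statement concerns the derived group of the identity component, so you should make the reconciliation explicit. Second, surjectivity of $\Delta_A$ does not require strict comparison or any regularity hypothesis: the paper invokes the Cuntz--Pedersen theorem \cite[Proposition 2.7]{CUPE79}, valid for arbitrary unital $\rm{C}^*$-algebras, to realise every $f\in\Aff(T(A))$ \emph{exactly} (not merely approximately) as $\tau\mapsto\tau(h)$ for a single self-adjoint $h\in A$. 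Exact realisation matters because $\rho_A(K_0(A))$ need not be closed in $\Aff(T(A))$, so a density argument would not by itself yield surjectivity onto the quotient.
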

\begin{proof}
    The $K_1$-class of a unitary $u \in U_\infty(A)$ is precisely its path component in $U_\infty(A)$. 
Moreover, for any $u,v \in U_N(A)$, we have
\begin{equation}
    [u,v] \oplus 1_A \oplus 1_A = [u \oplus u^* \oplus 1_A, v \oplus 1_A \oplus v^*].
\end{equation}
Therefore,
    \begin{equation}
        \Ker(\kappa_A) = \frac{U^{(0)}_\infty(A)}{[U^{(0)}_\infty(A),U^{(0)}_\infty(A)]}.
    \end{equation}
It remains to show that the universal de la Harpe--Skandalis determinant 
    \begin{equation}
        \Delta_A:U^{(0)}_\infty(A) \rightarrow \frac{\Aff(T(A))}{\rho_A(K_0(A))}
    \end{equation}
is surjective and that its kernel is precisely $[U^{(0)}_\infty(A),U^{(0)}_\infty(A)]$.
    
For every $f \in \Aff(T(A))$, there is a self-adjoint $h \in A$ with $\tau(h) = f(\tau)$ for all $\tau \in T(A)$ by \cite[Theorem 9.3]{Lin07}, which builds on results of Cuntz and Pedersen (\cite{CUPE79}). Then $\Delta_A(\exp(2\pi ih)) = f + \rho(K_0(A))$. Therefore, $\Delta_A$ is surjective.  
    
By construction $\Ker(\Delta_A) \supseteq [U^{(0)}_\infty(A),U^{(0)}_\infty(A)]$. The reverse inclusion follows from \cite[Theorem 1.1]{KernelDet} since $A$ is a simple, separable, pure C$^*$-algebra with stable rank one and where all quasitraces are traces. (Pureness is a consequences of $\Z$-stability; see \cite[Proposition 2.7]{Wi12}. Stable rank one also follows from $\Z$-stability when $A$ is stably finite; see \cite[Theorem 6.7]{RO04}. All quasitraces are traces as $A$ is exact; see \cite{HAA14}.)
\end{proof}

\begin{remark}
    By \cite[Theorem 3]{SdlH84a}, every unitary in a unital, simple, \emph{infinite} C$^*$-algebra in the path component of the identity is a product of commutators. Hence, the canonical surjection $\kappa_A:K_1^{\alg}(A) \twoheadrightarrow K_1(A)$ is an isomorphism in this case. Together with Theorem \ref{thm:classifiable-K1-alg} this facilitates the computation of $K_1^{\alg}(A)$ for any classifiable C$^*$-algebra. 
\end{remark}

Given a unital $^*$-homomorphism $f:A\rightarrow B$. There is a well defined group homomorphism $K_1^{\alg}(f):K_1^{\alg}(A) \rightarrow K_1^{\alg}(B)$ given by $[u]_{K_1^{\alg}(A)} \mapsto [f(u)]_{K_1^{\alg}(B)}$. In the language of category theory, $K_1^{\alg}(\cdot)$ is a covariant functor from the category of unital C$^*$-algebras to the category of abelian groups. The same is true for $K_1(\cdot)$ and $\Aff(T(\cdot))$.

The short exact sequence \eqref{eqn:alg-K1-ses} is natural in the sense that a morphism of unital $\rm{C}^*$-algebra will induce a morphism between the corresponding short exact sequences. For every $A$, the short exact sequence \eqref{eqn:alg-K1-ses} will split, since $\Aff(T(A))$ is a divisible group. However, the splitting is not natural.  

\section{\texorpdfstring{$G$}{G}-Kernels and anomalous actions} \label{sec:anomalous-actions}

Let $A$ be a unital $\rm{C}^*$-algebra and let $G$ be a group. A \emph{$G$-kernel} is a group homomorphism $G \rightarrow \Out(A)$. Motivated by an analogous concept of the same name in group theory (\cite{EILMAC47}), the study of $G$-kernels in the setting of von Neumann algebras was initiated by Nakamura and Takeda (\cite{NATA59,Ta59}) and developed by Sutherland (\cite{Su80}) and Jones (\cite{JON80}).

In this section, we review the definition of the $H^3$ invariant of a $G$-kernel and the definition of an anomalous action.

\subsection{The \texorpdfstring{$H^3$}{H3} invariant}\label{sec:H3}
Let $A$ be a unital $\rm{C}^*$-algebra and let $G$ be a group. Fix a $G$-kernel $\bar{\theta}:G \rightarrow \Out(A)$. 

For each $g \in G$, choose a lift $\theta_g \in \Aut(A)$ for $\bar{\theta}(g)$.
Since $\bar{\theta}$ is a group homomorphism, there exist unitaries $u_{g,h} \in U(A)$ such that $\theta_{gh} = \Ad(u_{g,h})\theta_g\theta_h $ for each $g, h \in G$. 
Given $g,h,k\in G$, we may compute  $\theta_{ghk}$ in two different ways:
\begin{align}
    \theta_{ghk} &= \Ad(u_{gh,k})\theta_{gh}\theta_k\\
    &= \Ad(u_{gh,k})\Ad(u_{g,h})\theta_{g}\theta_{h}\theta_k\nonumber\\
    &= \Ad(u_{gh,k}u_{g,h})\theta_{g}\theta_{h}\theta_k,\nonumber\\
    \nonumber \\
    \theta_{ghk} &= \Ad(u_{g,hk})\theta_g\theta_{hk}\\
    &= \Ad(u_{g,hk})\theta_g\Ad(u_{h,k})\theta_{h}\theta_{k} \nonumber\\
    &= \Ad(u_{g,hk})\Ad(\theta_g(u_{h,k}))\theta_g\theta_{h}\theta_{k} \nonumber\\
    &= \Ad(u_{g,hk}\theta_g(u_{h,k}))\theta_g\theta_{h}\theta_{k}. \nonumber
\end{align}
Hence, $\Ad(u_{gh,k}u_{g,h}) = \Ad(u_{g,hk}\theta_g(u_{h,k}))$. 

The kernel of the group homomorphism $\Ad:U(A) \rightarrow \Aut(A)$ is the centre of the unitary group $Z(U(A))$. Therefore, we may define a function $\omega:G^3 \rightarrow Z(U(A))$ by
\begin{equation}\label{eqn:def-omega}
\omega(g,h,k) = u_{g,hk}\theta_g(u_{h,k})u_{g,h}^{-1}u_{gh,k}^{-1}
\end{equation}
for all $g,h,k \in G$.

The group $Z(U(A))$ is abelian, and can be endowed with a $\mathbb{Z}G$-module structure where $g$ acts via $\theta_g|_{Z(U(A))}$. So $\omega$ is a 3-cochain with coefficients in $Z(U(A))$. Moreover, a simple but tedious computation shows that $d\omega = 0$; see  \cite[Lemma 7.1]{EILMAC47}. Hence, $\omega \in Z^3(G,Z(U(A)))$. The cohomology class $[\omega] \in H^3(G, Z(U(A))$ is the \emph{$H^3$ invariant} of the $G$-kernel $\bar{\theta}$. It does not depend on the choice of lifts $\theta_g$ or the choice of unitaries $u_{g,h}$; see \cite[Theorem 7.1]{EILMAC47}.

\subsection{Anomalous actions}\label{sec:omega-anomalous}

We now recall the definition of an anomalous action from \cite[Definition 1.1]{JO20}. 

\begin{definition}\label{def:anomalous-action}
    Let $G$ be a group and let $\omega \in Z^3(G, \T)$. An \emph{$\omega$-anomalous action} of $G$ on a $\rm{C}^*$-algebra $A$ consists of an automorphism $\theta_g$ for every $g \in G$ and unitaries $u_{g,h} \in U(M(A))$ such that
    \begin{enumerate}
        \item $\theta_{gh} = \Ad(u_{g,h})\theta_g\theta_h$ for $g, h \in G$,\label{cond:1}
        \item $\omega(g,h,k)u_{gh,k}u_{g,h} = u_{g,hk}\theta_g(u_{h,k})$ for $g,h,k \in G$.\label{cond:2}
    \end{enumerate}
\end{definition}

Formally, the data for an $\omega$-anomalous action is a pair of functions $(\theta, u)$ where $\theta:G \rightarrow \Aut(A)$ and $u:G \times G \rightarrow U(M(A))$. However, we shall typically use the subscript notation $\theta(g) = \theta_g$ and $u(g,h) = u_{g,h}$ to improve readability.

The relationship between anomalous actions of $G$ and $G$-kernels is straightforward when $A$ is a unital and has trivial centre. 
\begin{remark}\label{rem:changing-cocycle}
Let $A$ be a unital $\rm{C}^*$-algebra with $Z(A) = \C$.
\begin{itemize}
    \item  Every $G$-kernel lifts to an $\omega$-anomalous action of $G$ on $A$, where the cohomology class of $\omega$ coincides with the $H^3$ invariant. This follows from the derivation of the $H^3$ invariant in Section \ref{sec:H3} as $Z(U(A)) = U(Z(A)) = \T$.
    \item  Suppose $(\theta, u)$ is an $\omega$-anomalous actions of the group $G$ on a $\rm{C}^*$-algebra and $\omega'$ is cohomologous to $\omega$. Then $\omega' = d\lambda\cdot\omega$ for some 2-cochain $\lambda \in C^2(G,\T)$. Setting $u'_{g,h} = \lambda(g,h)u_{g,h}$, we have that $(\theta, u')$ is an $\omega'$-anomalous action of $G$ on $A$. 
    \item Conversely, every $\omega$-anomalous action of $G$ induces a $G$-kernel with $H^3$-invariant $[\omega]$.
\end{itemize}
\end{remark}

Suppose $A$ is a  unital C$^*$-algebra with $Z(A) \neq \C$. Then lifting a $G$-kernel gives a cocycle $\omega \in Z^3(G, Z(U(A)))$. In \cite{JO20}, the decision to restrict to cocycles with coefficients in $\T \subseteq Z(U(A))$ is justified by physical interpretations. While we are mostly interested in simple $\rm{C}^*$-algebras, which necessarily have $Z(A) = \C$, our methods don't fundamentally require $A$ to have trivial centre, so could be extended to work with a more general notion of anomalous actions.

\subsection{Anomalous Actions on \texorpdfstring{$\R$}{R}}\label{sec:actions-on-R}

In \cite{CO77}, Connes considers automorphisms $\alpha \in \Aut(\R)$ such that $\alpha^n = \Ad(u)$ for some $u \in U(\R)$. He shows that $\alpha(u) = \gamma u$ for some $n$-th root of unity $\gamma \in \C$. 

These computations can be viewed as a special case of those of Section \ref{sec:H3}, when $G$ is the cyclic group $\mathbb{Z}_n$. In particular, the cohomology group $H^3(\mathbb{Z}_n,\T)$ is cyclic of order $n$ and can be identified with the group of $n$-roots of unity; see for example \cite[Proposition 2.3]{HU14}.

Moreover, Connes constructs an automorphism $s_n^\gamma$ with order $n$ in $\Out(\R)$ and such that $s_n^\gamma(u) = \gamma u$ for every $n$-th root of unity $\gamma$. We record this result of Connes below.
\begin{theorem}[{cf. \cite[Proposition 1.6]{CO77}}]\label{thm:Connes}
    Fix $n\in\N$. View $\R = \overline{\bigotimes}_{i\in \N} (\M_n, \tr_n)$. Let $\pi_i:\M_n \rightarrow \R$ be the embedding into the $i$-th tensor factor, and let $\theta:\R \rightarrow \R$ be the endomorphism such that $\theta\pi_{i} = \pi_{i+1}$ for all $i \in \N$. 
    
    Let $\gamma$ be an $n$-th root of unity.
    Set 
    \begin{align}
        u &= \sum_{j=1}^n \gamma^j\pi_1(e_{j,j})\\
        v &= \pi_1(e_{n,1})\theta(u) + \sum_{j=1}^{n-1} \pi_1(e_{j,j+1}).
    \end{align}
    Then the sequence $(\Ad(v\theta(v)\theta^2(v)\cdots\theta^k(v)))_{k=1}^\infty$ converges pointwise in the $\|\cdot\|_2$-norm topology to an automorphism $s_n^\gamma$ such that $(s_n^\gamma)^n = \Ad(u)$ and $s_n^\gamma(u) = \gamma u$.
\end{theorem}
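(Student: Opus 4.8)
The plan is to establish convergence of the sequence by a soft argument, then to verify the two algebraic relations by an explicit computation in the matrix model, and finally to deduce that $s_n^\gamma$ is an automorphism from the identity $(s_n^\gamma)^n = \Ad(u)$.

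\emph{Convergence.} The first step is to observe that $v$ is a unitary: setting $P = \pi_1(P_0)$ with $P_0 = \sum_{j=1}^{n-1}e_{j,j+1} + e_{n,1}$ the cyclic permutation matrix and $W = \sum_{j=1}^{n-1}\pi_1(e_{jj}) + \pi_1(e_{nn})\theta(u)$, a short calculation gives $v = WP$, a "weighted cyclic shift" supported on the first two tensor factors. Write $v_k = v\theta(v)\cdots\theta^k(v) = v_{k-1}\theta^k(v)$, which is supported on the factors $1,\dots,k+2$. If $a$ is supported on the factors $1,\dots,m$ and $k\geq m$, then $\theta^k(v)$ commutes with $a$, so $\Ad(v_k)(a) = \Ad(v_{k-1})(a)$; hence $\Ad(v_k)(a)$ is eventually equal to $\Ad(v_{m-1})(a)$ on the $\|\cdot\|_2$-dense $*$-algebra $\bigcup_m \bigotimes_{i=1}^m \M_n$. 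Since each $\Ad(v_k)$ preserves the trace, hence is $\|\cdot\|_2$-isometric, and since bounded $\|\cdot\|_2$-Cauchy sequences in $\R$ converge, the limit $s_n^\gamma(a) := \lim_k \Ad(v_k)(a)$ exists for every $a\in\R$. A routine check using $\|bc\|_2 \leq \|b\|\,\|c\|_2$ and $\|bc\|_2 \leq \|b\|_2\,\|c\|$ shows $s_n^\gamma$ is a unital, trace-preserving, normal $*$-endomorphism of $\R$, and from the above $s_n^\gamma(\pi_i(x)) = \Ad(v_{i-1})(\pi_i(x))$, an element supported on the factors $1,\dots,i+1$.

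\emph{The algebraic relations.} Next I would record the computations making the construction work. First, $P_0 u_0 P_0^* = \gamma u_0$, since conjugating by the cyclic permutation cyclically permutes the eigenvalues $\gamma,\gamma^2,\dots,\gamma^n=1$ of $u_0$; and $W$ commutes with $u = \pi_1(u_0)$, because $u$, $\theta(u)$ and the matrix units $e_{jj}$ are all diagonal on the first factor. Hence $\Ad(v)(u) = W(PuP^*)W^* = \gamma u$; and as $\theta^j(v)$ commutes with $u$ for $j\geq 1$, this already gives $s_n^\gamma(u) = \Ad(v)(u) = \gamma u$. Second, each $P_\ell$ need only be commuted past $W$'s on strictly higher factors, so $v_k = (W_1\cdots W_{k+1})(P_1\cdots P_{k+1})$, where $W_\ell$, $P_\ell$ are the translates of $W$, $P$ to the factors $\ell,\ell+1$, resp.\ $\ell$; the $W_\ell$ are diagonal, hence mutually commuting, the $P_\ell$ commute, and $P_\ell^n = 1$. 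Writing $\mathcal{W} = W_1\cdots W_{k+1}$ and $\mathcal{P} = P_1\cdots P_{k+1}$, one gets $v_k^n = \prod_{m=0}^{n-1}\mathcal{P}^m\mathcal{W}\mathcal{P}^{-m}$; conjugation by $\mathcal{P}^m$ cyclically shifts the first-factor support of each $W_\ell$ and scales its second-factor part by $\gamma^m$, so summing over $m$ collapses the relevant sub-product to $\pi_\ell(u_0)^*\pi_{\ell+1}(u_0)$ for $\ell\leq k$ and to $\pi_{k+2}(u_0)$ for $\ell = k+1$, and the product over $\ell$ telescopes:
\begin{equation*}
 v_k^n = u^*\,\theta^k(u)\,\theta^{k+1}(u), \qquad k\geq 0 .
\end{equation*}

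\emph{The relation $(s_n^\gamma)^n = \Ad(u)$, and conclusion.} Iterating $s_n^\gamma(\pi_i(x)) = \Ad(v_{i-1})(\pi_i(x))$ and tracking supports gives $(s_n^\gamma)^m(\pi_i(x)) = \Ad(v_{i+m-2}v_{i+m-3}\cdots v_{i-1})(\pi_i(x))$, so in particular $(s_n^\gamma)^n(\pi_i(x)) = \Ad(v_{i+n-2}\cdots v_i v_{i-1})(\pi_i(x))$. It therefore remains to show that $u^*\,v_{i+n-2}\cdots v_i v_{i-1}$ commutes with $\pi_i(\M_n)$, equivalently lies in the relative commutant $\bigvee_{\ell\neq i}\pi_\ell(\M_n)$; this I would extract by a direct computation from $v_\ell = (W_1\cdots W_{\ell+1})(P_1\cdots P_{\ell+1})$ together with the identities above, showing that the "factor-$i$ content" of the product $v_{i+n-2}\cdots v_{i-1}$ is $u_0$ when $i=1$ and trivial when $i\geq 2$. \textbf{This is the main obstacle}: essentially all the genuine content of the theorem lies in this bookkeeping. (The case $n=2$ already shows the mechanism, where one computes $(s_n^\gamma)^2(\pi_1(x)) = \pi_1(u_0 x u_0^*)$ directly on the four matrix units of $\M_2$.) Granting $(s_n^\gamma)^n = \Ad(u)$, the endomorphism $s_n^\gamma$ has right inverse $(s_n^\gamma)^{n-1}\circ\Ad(u^*)$ and left inverse $\Ad(u^*)\circ(s_n^\gamma)^{n-1}$, hence is an automorphism; together with $s_n^\gamma(u) = \gamma u$, this proves the theorem. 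Alternatively, one may quote \cite[Proposition 1.6]{CO77} for the relations and use the above only for the convergence and automorphism statements.
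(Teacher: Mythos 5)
First, a point of comparison: the paper itself does not prove this statement. It is recorded as a result of Connes with the citation \cite[Proposition 1.6]{CO77}, and the only ingredient of the proof the paper isolates (in the remark that follows) is that $(\Ad(v\theta(v)\cdots\theta^k(v))(x))_{k}$ is eventually constant for $x$ in the algebraic tensor product. Your fallback of quoting Connes for the two relations is therefore exactly what the paper does, and your convergence argument correctly supplies the one ingredient the paper actually uses later.

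Judged as a self-contained proof, however, there is a genuine gap, and you have flagged it yourself: the verification that $(s_n^\gamma)^n = \Ad(u)$. Everything up to that point checks out: the factorisation $v = WP$, the computation $\Ad(v)(u)=\gamma u$ (hence $s_n^\gamma(u)=\gamma u$, which is complete), the commutation bookkeeping giving $v_k = (W_1\cdots W_{k+1})(P_1\cdots P_{k+1})$, the identity $v_k^n = u^*\theta^k(u)\theta^{k+1}(u)$, and the reduction of $(s_n^\gamma)^n = \Ad(u)$ to the claim that $u^*\,v_{i+n-2}\cdots v_{i-1}$ commutes with $\pi_i(\M_n)$ for every $i$. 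But that last claim is where the content of the theorem sits, and it does not follow from the identity $v_k^n = u^*\theta^k(u)\theta^{k+1}(u)$ that you do establish: that identity controls the $n$-th power of a \emph{single} $v_k$, whereas the product $v_{i+n-2}\cdots v_{i-1}$ interleaves $n$ \emph{different} $v_k$'s, so the $P_\ell$'s and $W_\ell$'s coming from distinct factors of the product must be commuted past one another afresh. (For instance, for $n=2$ and $i=1$ one needs $v_1v_0 = v\theta(v)v = u\,\theta(u)\,\theta(v)$, which is a new computation rather than an instance of $v_k^2=\cdots$.) Until that bookkeeping is carried out in general, the proof of $(s_n^\gamma)^n=\Ad(u)$ --- and with it the automorphism property, which you correctly deduce from it --- is incomplete. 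The surrounding architecture (convergence on a $\|\cdot\|_2$-dense subalgebra via trace preservation, the support computation $s_n^\gamma(\pi_i(x)) = \Ad(v_{i-1})(\pi_i(x))$, and the observation that an endomorphism $\sigma$ with $\sigma^n=\Ad(u)$ is invertible) is all sound.
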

\begin{remark}{(cf. \cite[Proposition 1.6]{CO77})}
    For the purposes of Section~\ref{sec:existence}, we wish to draw attention to one aspect of Connes proof.

    For any element of the algebraic tensor product $\bigodot_{i\in \N} \M_n$, Connes shows that the sequence $(\Ad(v\theta(v)\theta^2(v)\cdots\theta^k(v))(x))_{k=1}^\infty$ is eventually constant. It follows that $s_n^\gamma$ restricts to an automorphism of the UHF algebra $\bigotimes_{i\in \N} \M_n$ and $(\Ad(v\theta(v)\theta^2(v)\cdots\theta^k(v)))_{k=1}^\infty$ converges in the point norm topology on this subalgebra.
\end{remark}

Given Connes' automorphism $s_n^\gamma$, we can define a $\mathbb{Z}_n$-kernel $\bar{\theta}$ on $\R$ by $\bar{\theta}(i) = (s_n^\gamma)^i + \Inn(\R)$. As $\gamma$ ranges over all $n$-th roots of unity, all possible $H^3$ invariants are realised. As outlined in Remark \ref{rem:changing-cocycle}, it follows that for any $\omega \in Z^3(\mathbb{Z}_n,\T)$, there is an $\omega$-anomalous $\mathbb{Z}_n$-action on $\R$. 

Connes result was later generalised by Jones to cover all countable discrete groups. We record this result of Jones below, translated into the language of anomalous actions using Remark \ref{rem:changing-cocycle}.
\begin{theorem}[{cf. \cite[Theorem 2.5]{JON79}}]
    Let $G$ be a countable discrete group and $\omega \in Z^3(G,\T)$. Then there exists a $\omega$-anomalous action of $G$ on $\R$. 
\end{theorem}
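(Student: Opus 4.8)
We plan to use the reformulation provided by Remark~\ref{rem:changing-cocycle}, which applies since $Z(\R) = \C$: it suffices to build a $G$-kernel $\bar\theta \colon G \to \Out(\R)$ whose $H^3$ invariant is the prescribed class $[\omega]$. Such a kernel lifts to an $\omega'$-anomalous action for some $\omega'$ cohomologous to $\omega$, and the cocycle is then adjusted back to $\omega$ by reparametrising the unitaries $u_{g,h}$ as in Remark~\ref{rem:changing-cocycle}. So the whole problem becomes: realise every class of $H^3(G,\T)$ as the $H^3$ invariant of a $G$-kernel on $\R$.

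To organise the cocycle data we fix a free group $F$ with a surjection $\pi \colon F \twoheadrightarrow G$ and put $N = \ker \pi$. Since $F$ has cohomological dimension at most $1$, $H^3(F,\T) = 0$, so $\pi^*\omega = d\mu$ for some $\mu \in C^2(F,\T)$. The Eilenberg--MacLane obstruction picture (\cite{EILMAC47}) then tells us exactly what to construct: a genuine action $\alpha \colon F \to \Aut(\R)$ with $\alpha(N) \subseteq \Inn(\R)$, implemented by a map $w \colon N \to U(\R)$ with $\alpha_n = \Ad(w_n)$ whose associated $2$-cocycle is cohomologous to $\mu|_{N \times N}$ and which transforms compatibly under $\alpha|_F$. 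Given such data, $\bar\alpha$ kills $N$ and hence factors through $G$ as a $G$-kernel $\bar\theta$, and unwinding the derivation of the $H^3$ invariant in Section~\ref{sec:H3} identifies the invariant of $\bar\theta$ with $[\omega]$.

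The concrete model for $\R$ and the construction of $\alpha$ would proceed by a Connes-type infinite tensor product argument generalising Theorem~\ref{thm:Connes}. Using that $\overline{\bigotimes}_{i \in \N}(B_i,\tau_i)$ is isomorphic to $\R$ whenever each $B_i$ is a full matrix algebra and $\tau_i$ its tracial state, we take $\R = \overline{\bigotimes}_{i\in\N}(B_0,\tau_0)$ for a building block $B_0$ adapted to a chosen free generating set of $F$ and to $\mu$, together with a shift endomorphism $\theta$. For each generator $a$ we set $\theta_a = \lim_{k\to\infty}\Ad\bigl(v_a\,\theta(v_a)\,\theta^2(v_a)\cdots\theta^k(v_a)\bigr)$, where the unitaries $v_a \in B_0$ (together with a local twist encoding $\mu$, in the spirit of Connes' $u$ and $\gamma$) are chosen so that each defining relation of $N$ in $F$ is implemented by the prescribed unitary $w_n$ with cocycle matching $\mu$. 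As in Theorem~\ref{thm:Connes}, the product $v_a\,\theta(v_a)\cdots\theta^k(v_a)$ is eventually constant on any element of the algebraic tensor product, so the limit exists in the $\|\cdot\|_2$-topology and defines an automorphism of $\R$; this is the mechanism by which the defects in the relations are pushed to infinity and absorbed as inner automorphisms. When $G$ is not finitely generated one runs the construction one generator at a time on separate tensor sub-blocks, using $\R \cong \overline{\bigotimes}_{i\in\N}\R$, or equivalently passes to an inductive limit over an exhaustion of $F$ by finitely generated free subgroups.

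The main obstacle is the one that makes this a theorem rather than a triviality: arranging that $\alpha|_N$ is \emph{genuinely} inner, not merely approximately inner, while simultaneously controlling the $2$-cocycle of the implementing projective representation $w$ so that the Eilenberg--MacLane reduction lands in the class $[\omega]$ and not in some other class. This requires a careful ``infinite product of approximately inner perturbations'' argument in the style of Connes (\cite{CO77, JON79}), with convergence verified on a norm-dense $*$-subalgebra, together with the bookkeeping needed to handle the generally infinitely many relators of $G$ coherently; it is precisely here that the hyperfiniteness and asymptotic structure of $\R$ are indispensable, and the construction has no evident analogue for general classifiable $\rm{C}^*$-algebras.
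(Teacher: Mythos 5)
Your reduction via Remark \ref{rem:changing-cocycle} to realising $[\omega]$ as the $H^3$ invariant of a $G$-kernel is correct, and the Eilenberg--MacLane framework you set up (a free resolution $\pi\colon F \twoheadrightarrow G$ with $\pi^*\omega = d\mu$, using that $F$ has cohomological dimension one) is a legitimate way to organise the obstruction theory. But the argument has a genuine gap exactly where you flag ``the main obstacle'': the existence of an action $\alpha\colon F \to \Aut(\R)$ that is genuinely inner on $N = \ker\pi$, implemented by a map $w\colon N \to U(\R)$ whose $2$-cocycle and $\alpha$-equivariance defects match $\mu$, \emph{is} the content of the theorem, and you assert it rather than construct it. The mechanism you gesture at --- one Connes-type shift automorphism per free generator, with defects ``pushed to infinity'' --- does not obviously yield genuine (as opposed to approximate) innerness on $N$: the argument of Theorem \ref{thm:Connes} exploits the single relation $\alpha^n = \Ad(u)$, $\alpha(u)=\gamma u$ of a cyclic group, whereas here $N$ is a free group of large (typically infinite) rank and one must choose $w_n$ coherently for every $n \in N$. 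In particular, running the generators on separate tensor sub-blocks makes them act essentially independently and provides no mechanism forcing $\alpha_r$ to be inner for a relator $r$ mixing several generators.

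For comparison, the paper does not reprove this statement --- it is quoted from \cite{JON79} --- but the underlying construction (revisited in the C$^*$-setting in Section \ref{sec:existence}) sidesteps your obstacle entirely. Lemma \ref{lem:cohomology-lemma} replaces the free resolution by an extension $\rho\colon \tilde{G} \twoheadrightarrow G$ with \emph{abelian} kernel $K$ together with a normalised $2$-cochain $c$ on $\tilde{G}$ satisfying $dc = \rho^*(\omega)$, and Theorem \ref{thm:coreys-existence} then produces the anomalous action on the twisted crossed product $B \rtimes^{\mathrm{r}}_{\pi,c} K$ by explicit formulas. In that model the innerness on the kernel is automatic, because the elements of $K$ are implemented by the canonical unitaries of the crossed product and the cocycle bookkeeping is built into the twisting; the only remaining work is to identify the crossed product with $\R$, which uses amenability of $K$ and hyperfiniteness. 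To complete your route you would either have to prove your central step directly --- which in effect reconstructs such a twisted crossed product --- or switch to the extension supplied by Lemma \ref{lem:cohomology-lemma}.
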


In Jones' construction, $\R$ is realised as a (twisted) crossed product. We will revisit this construction in Section \ref{sec:existence}. 

\section{\texorpdfstring{$\rm{C}^*$}{C*}-Obstructions}\label{sec:obstructions}

In this section, we showcase the obstruction to the existence of $\omega$-anomalous actions on $\rm{C}^*$-algebras, which we will then use to prove Theorem \ref{thm:JiangSu} and Theorem \ref{thm:UHF}. 
The obstruction arises as the unitary group of a C$^*$-algebra can have non-trivial abelian quotients. This is not the case for $\R$, as every unitary in $\R$ can be written as a product of commutators (see for example \cite{BR67}).

We begin by isolating the key computation as a proposition. 
\begin{proposition}\label{HomtoAb}
    Let $G$ be a group and $\omega \in Z^3(G,\T)$.
	Let $(\theta, u)$ be an $\omega$-anomalous action of $G$ on a unital $\rm{C}^*$-algebra $A$. 
	Suppose $q:U(A) \rightarrow M$ is a group homomorphism into some $\mathbb{Z}G$-module $M$, such that $q(\theta_g(v)) = g \cdot q(v)$ for all $v \in U(A)$ and $g\in G$. Then $[q \circ \omega]=0$ in $H^3(G, M)$.
\end{proposition}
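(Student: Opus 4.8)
The plan is to show that $q \circ \omega$ is a $3$-coboundary by exhibiting an explicit $2$-cochain whose coboundary it is. The natural candidate comes from applying $q$ to the unitaries $u_{g,h}$ that witness condition \eqref{cond:1} of the anomalous action. So first I would define $\lambda \in C^2(G, M)$ by $\lambda(g,h) = q(u_{g,h})$ for all $g, h \in G$. Since $q$ is a group homomorphism $U(A) \to M$ and each $u_{g,h} \in U(A)$, this is a well-defined $2$-cochain with coefficients in $M$.

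Next I would compute $d^3\lambda$ directly from the definition \eqref{eqn:cohomology-def} (in multiplicative notation for $M$, matching the convention in the remark after \eqref{eqn:cohomology-def-2}), namely
\begin{equation*}
    d^3\lambda(g,h,k) = g \cdot \lambda(h,k) \cdot \lambda(gh,k)^{-1} \cdot \lambda(g,hk) \cdot \lambda(g,h)^{-1}.
\end{equation*}
Using the hypothesis $q(\theta_g(v)) = g \cdot q(v)$, we have $g \cdot \lambda(h,k) = g \cdot q(u_{h,k}) = q(\theta_g(u_{h,k}))$, so that
\begin{equation*}
    d^3\lambda(g,h,k) = q\!\left( u_{g,hk}\,\theta_g(u_{h,k})\, u_{g,h}^{-1}\, u_{gh,k}^{-1} \right),
\end{equation*}
using that $q$ is a homomorphism and $M$ abelian to recombine the factors. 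By the defining formula \eqref{eqn:def-omega} for $\omega$, the argument of $q$ on the right-hand side is exactly $\omega(g,h,k)$ — here one must be slightly careful about whether $\omega$ is viewed as $\T$-valued via the inclusion $\T \hookrightarrow Z(U(A))$ and then pushed into $M$, but since $\omega(g,h,k) \in Z(U(A)) \subseteq U(A)$ we may simply apply $q$ to it. Hence $d^3\lambda = q \circ \omega$, which shows $q \circ \omega \in B^3(G, M)$ and therefore $[q \circ \omega] = 0$ in $H^3(G, M)$.

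The only genuine subtlety — and the step I would be most careful about — is bookkeeping around the coefficient module and the sign/inverse conventions: the coboundary formula \eqref{eqn:cohomology-def} is written additively, so I must transcribe it faithfully into multiplicative notation for $M$, track the $\mathbb{Z}G$-action on $M$ (which enters only through the $g \cdot \lambda(h,k)$ term and is precisely what the equivariance hypothesis on $q$ converts into $\theta_g$), and make sure the ordering of factors in \eqref{eqn:def-omega} matches the alternating product; since $M$ is abelian none of the reorderings cost anything, so this is purely a matter of matching conventions rather than a real obstacle. Everything else is a direct substitution.
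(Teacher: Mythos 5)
Your proposal is correct and follows essentially the same route as the paper: the paper likewise sets $\eta(g,h) = q(u_{g,h})$, applies $q$ to the defining relation \eqref{eqn:def-omega}, and uses the equivariance hypothesis to identify $q(\omega(g,h,k)1_A)$ with $d\eta(g,h,k)$. Your extra remark about viewing $\omega(g,h,k)$ as the unitary $\omega(g,h,k)1_A$ before applying $q$ is exactly the bookkeeping the paper also performs.
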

\begin{proof}
Let $g,h,k\in G$. Since addition in $M$ is commutative, applying $q$ to \eqref{eqn:def-omega} yields
\begin{align}
q(\omega(g,h,k)1_A) &= q(u_{g,hk})+q(\theta_g(u_{h,k}))-q(u_{g,h})-q(u_{gh,k})\\
&= g \cdot q(u_{h,k}) -q(u_{gh,k}) + q(u_{g,hk}) - q(u_{g,h})\nonumber \\
&= d\eta(g,h,k)\nonumber 
\end{align}
where $\eta$ is the $2$-cochain defined by $\eta(g,h) = q(u_{g,h})$.
\end{proof}

In order to make use of Proposition \ref{HomtoAb}, we need a candidate for the homomorphism $q$. This is where the unitary algebraic $K_1$ group enters the picture (see Section \ref{sec:algebraic-K1}). By construction, $K^{\alg}_1(A)$ is an abelian group and every automorphism of $A$ induces an automorphism of $K^{\alg}_1(A)$ with inner automorphisms acting trivially. Hence, an anomalous action on $A$ gives rise to a $\mathbb{Z}G$-module structure on $K^{\alg}_1(A)$.

The reason for working with $K^{\alg}_1(A)$ instead of $K_1(A)$ is that scalars $\lambda 1_A$ always have trivial $K_1$ class but can have non-trivial $K_1^{\alg}$ class. This is necessary in order for the conclusion of Proposition \ref{HomtoAb} to be non-trivial when $Z(A) = \C$. In the case of the Jiang-Su algebra $\Z$ it would be sufficient to use the Hausdorffised version of algebraic $K_1$. However, for UHF algebra the non-Hausdorffised version of unitary algebraic $K_1$ is required.

\begin{theorem}\label{thm:K1-obstruction}
    Let $G$ be a group and $\omega \in Z^3(G,\T)$.
	Let $(\theta, u)$ be an $\omega$-anomalous action of $G$ on a unital $\rm{C}^*$-algebra $A$. 
	View $K^{\alg}_1(A)$ as a $\mathbb{Z}G$-module where $g$ acts via $K^{\alg}_1(\theta_g)$.
	Then the 3-cocycle given by  $(g,h,k) \mapsto [\omega(g,h,k)1_A]_{K^{\alg}_1}$ is trivial in $H^3(G, K^{\alg}_1(A))$.
\end{theorem}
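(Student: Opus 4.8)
The plan is to deduce Theorem \ref{thm:K1-obstruction} directly from Proposition \ref{HomtoAb} by choosing the right homomorphism $q$. The natural candidate is the composite
\[
q : U(A) \hookrightarrow U_\infty(A) \twoheadrightarrow \frac{U_\infty(A)}{[U_\infty(A),U_\infty(A)]} = K^{\alg}_1(A),
\]
where the first map is the canonical inclusion $U(A) = U_1(A) \subseteq U_\infty(A)$ and the second is the abelianisation quotient. Since $K^{\alg}_1(A)$ is abelian by construction, $q$ is a genuine group homomorphism into an abelian group, so all the hypotheses of Proposition \ref{HomtoAb} of the form ``$q$ is a homomorphism into a $\z G$-module'' are in place once we check the equivariance condition.

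The key step is therefore to verify that $q(\theta_g(v)) = g \cdot q(v)$ for all $v \in U(A)$ and $g \in G$, where the $\z G$-module structure on $M = K^{\alg}_1(A)$ is the one induced by the anomalous action, i.e.\ $g$ acts via $K^{\alg}_1(\theta_g)$. First I would recall that for a unital $^*$-homomorphism $f : A \to B$ the induced map $K^{\alg}_1(f)$ sends $[w]_{K^{\alg}_1(A)}$ to $[f(w)]_{K^{\alg}_1(B)}$ (this is the functoriality statement recorded just after Theorem \ref{thm:classifiable-K1-alg}), and that this is compatible with the inclusions $U_n(A) \subseteq U_{n+1}(A)$ since $f(w \oplus 1_A) = f(w) \oplus 1_B$. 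Applying this with $f = \theta_g \in \Aut(A)$ and $w = v \in U_1(A)$ gives $K^{\alg}_1(\theta_g)([v]) = [\theta_g(v)]$, which is exactly $q(\theta_g(v)) = g \cdot q(v)$. A small point worth spelling out: the automorphism $K^{\alg}_1(\theta_g)$ of $K^{\alg}_1(A)$ depends only on $\theta_g$, and inner automorphisms act trivially on $K^{\alg}_1(A)$ (since $\Ad(u)(w)$ and $w$ differ by a conjugation, hence have the same class in the abelianisation); this is what makes the $\z G$-module structure well defined, and it is also implicitly needed so that the relation $\theta_{gh} = \Ad(u_{g,h})\theta_g\theta_h$ is consistent with $g \mapsto K^{\alg}_1(\theta_g)$ being a homomorphism — though strictly speaking Proposition \ref{HomtoAb} only needs the equivariance of the single map $q$, not that $G$ acts, so I would keep the argument minimal.

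With the equivariance in hand, Proposition \ref{HomtoAb} applies verbatim and yields $[q \circ \omega] = 0$ in $H^3(G, K^{\alg}_1(A))$. Unravelling the definition, $q(\omega(g,h,k)1_A) = [\omega(g,h,k)1_A]_{K^{\alg}_1}$, so $q \circ \omega$ is precisely the 3-cocycle $(g,h,k) \mapsto [\omega(g,h,k)1_A]_{K^{\alg}_1}$ appearing in the statement, and the theorem follows. I expect the only genuine subtlety — rather than an ``obstacle'' — to be bookkeeping: making sure the $\z G$-module structure referenced in the statement (via $K^{\alg}_1(\theta_g)$) is literally the same as the one needed to match the hypothesis ``$q(\theta_g(v)) = g\cdot q(v)$'' of Proposition \ref{HomtoAb}, and that the scalar unitary $\omega(g,h,k)1_A \in U(A)$ is being pushed through the correct map (the inclusion $U(A) \to U_\infty(A)$ followed by abelianisation). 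Everything else is a direct appeal to already-proved functoriality of $K^{\alg}_1$.
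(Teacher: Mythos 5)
Your proposal is correct and follows exactly the paper's argument: the paper's proof likewise applies Proposition \ref{HomtoAb} with $M = K^{\alg}_1(A)$ carrying the induced $\mathbb{Z}G$-module structure and $q:U(A) \rightarrow K^{\alg}_1(A)$ the class map $v \mapsto [v]_{K^{\alg}_1}$. Your extra care in checking the equivariance $q(\theta_g(v)) = g\cdot q(v)$ is just a spelled-out version of what the paper leaves implicit.
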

\begin{proof}
    The result follows from Proposition \ref{HomtoAb}, taking $M = K^{\alg}_1(A)$ with the induced $\mathbb{Z}G$-module structure and $q:U(A) \rightarrow K^{\alg}_1(A)$ to be the map $u \mapsto [u]_{K^{\alg}_1}$.
\end{proof}

Theorem \ref{thm:classifiable-K1-alg} can compute the unitary algebraic $K_1$-groups of classifiable $\rm{C}^*$-algebras, and extract information about the $\mathbb{Z}G$-action. The following special case can be deduced from Theorem \ref{thm:K1-obstruction}; however, we choose to derive it directly from Proposition \ref{HomtoAb}. 

\begin{corollary}\label{cor:invariant-trace}
    Let $G$ be a group and $\omega \in Z^3(G,\T)$. 
    Let $A$ be a unital $\rm{C}^*$-algebra with $K_1(A) = 0$.
	Let $(\theta, u)$ be an $\omega$-anomalous action of $G$ on $A$. 
	Let $\tau \in T(A)$ be invariant under $\theta_g$ for all $g \in G$.
	Then $[\Delta_\tau \circ \omega]=0$ in $H^3(G, \mathbb{R}/ \tau_*(K_0(A)))$, where
\begin{equation}
    \Delta_\tau: U_\infty^{(0)}(A) \rightarrow \frac{\mathbb{R}}{\tau_*(K_0(A))}
\end{equation}
is the de la Harpe--Skandalis determinant with respect to $\tau$, and the abelian group $\mathbb{R}/ \tau_*(K_0(A))$ has the trivial $\mathbb{Z}G$-module structure.
\end{corollary}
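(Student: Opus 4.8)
The plan is to apply Proposition~\ref{HomtoAb} with $M = \mathbb{R}/\tau_*(K_0(A))$ carrying the trivial $\mathbb{Z}G$-module structure and with $q$ taken to be the Skandalis--de la Harpe determinant $\Delta_\tau$, restricted appropriately to $U(A)$. There are only two points to verify: that $\Delta_\tau$ genuinely makes sense as a group homomorphism on all of $U(A)$, and that it is invariant under each $\theta_g$; once these are established the conclusion is immediate.

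For the first point, recall that $\Delta_\tau$ is a priori defined only on $U_\infty^{(0)}(A)$. However, $K_1(A) \cong \pi_0(U_\infty(A))$, so the hypothesis $K_1(A) = 0$ gives $U_\infty(A) = U_\infty^{(0)}(A)$; in particular $U(A) = U_1(A) \subseteq U_\infty^{(0)}(A)$, and since the group operation on $U_1(A)$ agrees with that on $U_\infty(A)$, the restriction $q := \Delta_\tau|_{U(A)}$ is a group homomorphism $U(A) \to \mathbb{R}/\tau_*(K_0(A))$. Note also that, as $A$ is unital, $M(A) = A$, so the unitaries $u_{g,h}$ of the anomalous action lie in $U(A)$, hence in the domain of $q$.

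For the second point, fix $g \in G$ and $w \in U_n^{(0)}(A)$ for some $n$, and let $\xi:[0,1] \to U_n(A)$ be a smooth path from $1$ to $w$. Then $(\theta_g \otimes \id_{M_n}) \circ \xi$ is a smooth path in $U_n(A)$ from $1$ to $(\theta_g \otimes \id_{M_n})(w)$, and because $\tau \circ \theta_g = \tau$ we have $(\tau \otimes \Tr_n) \circ (\theta_g \otimes \id_{M_n}) = \tau \otimes \Tr_n$. Substituting into the defining integral \eqref{eqn:SdlH-def} shows $\Delta_\tau((\theta_g \otimes \id_{M_n})(w)) = \Delta_\tau(w)$. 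Taking $n = 1$ gives $q(\theta_g(v)) = q(v)$ for all $v \in U(A)$, i.e.\ $q(\theta_g(v)) = g \cdot q(v)$ for the trivial module structure on $M$.

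With these two facts, Proposition~\ref{HomtoAb} applies and yields $[q \circ \omega] = 0$ in $H^3(G, \mathbb{R}/\tau_*(K_0(A)))$. Finally, $q \circ \omega$ is by definition the $3$-cocycle $(g,h,k) \mapsto \Delta_\tau(\omega(g,h,k)1_A)$, which is exactly $\Delta_\tau \circ \omega$, using \eqref{eqn:SdlH-scalars} to identify the value on the scalar unitary $\omega(g,h,k)1_A$ with the class of $\omega(g,h,k) \in \T = \mathbb{R}/\mathbb{Z}$. There is no real obstacle here; the only thing that needs care is the bookkeeping around the domain of $\Delta_\tau$, and this is precisely what the vanishing of $K_1(A)$ takes care of.
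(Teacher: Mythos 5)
Your proposal is correct and follows the paper's own argument: the paper likewise observes that $K_1(A)=0$ forces $U_\infty(A)=U_\infty^{(0)}(A)$, applies Proposition~\ref{HomtoAb} with $q=\Delta_\tau$, and notes that $\theta_g$-invariance of $\tau$ gives $\Delta_\tau\circ\theta_g=\Delta_\tau$ via the defining integral \eqref{eqn:SdlH-def}. You simply spell out the verifications in more detail than the paper does.
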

\begin{proof}
    Since $K_1(A) = 0$, we have $U_\infty(A) = U_\infty^{(0)}(A)$. Therefore, we can apply Proposition \ref{HomtoAb} with $q = \Delta_\tau$. The fact that $\Delta_\tau(\theta_g(v)) = \Delta_\tau(v)$ follows from \eqref{eqn:SdlH-def} since $\tau$ is invariant under $\theta_g$.
\end{proof}

We are now ready to prove Theorem \ref{thm:JiangSu}. 
\begin{proof}[Proof of Theorem \ref{thm:JiangSu}]
   The Jiang--Su algebra $\Z$ has a unique tracial state $\tau$. Moreover,  $K_0(\Z) \cong \mathbb{Z}$ with the isomorphism given by $\tau_*$, and $K_1(\Z) = 0$;  see \cite[Theorem 1]{Jiang-Su}.  As $\tau$ is the unique tracial state, it is invariant under all automorphisms.
    
    Suppose there exists an $\omega$-anomalous action of $G$ on $\Z$.
    Then, by Corollary \ref{cor:invariant-trace}, we have  $[\Delta_\tau \circ \omega] = 0$ in $H^3(G, \mathbb{R}/\mathbb{Z})$.  However, $\Delta_\tau$ restricted to $Z(U(\Z)) = \T$ is an isomorphism by \eqref{eqn:SdlH-scalars}. Hence, $[\omega] = 0$ in $H^3(G, \T)$.
\end{proof}

When $G$ is countable, the converse to Theorem \ref{thm:JiangSu} is true. There exists an action of $G$ on $\Z \cong \bigotimes_{g \in G}\Z$ defined by permuting the tensor factors. Using Remark \ref{rem:changing-cocycle}, there exists $\omega$-anomalous actions of $G$ on $\Z$ for all $\omega \in Z^3(G, \T)$ with $[\omega] = 0$ in $H^3(G, \T)$. 

We now turn to the proof of Theorem \ref{thm:UHF}. We begin with a preliminary result that is of independent interest.
Given an anomalous-action on $A$, the following lemma will allow us, under certain conditions, to induce anomalous-actions on corners of $A$. We recall that a C$^*$-algebra $A$ is said to have the \emph{cancellation property} if any two projections that agree in $K_0(A)$ are Murray von Neumann equivalent.

\begin{lemma}\label{lem:corners}
Let $G$ be a group and $\omega \in Z^3(G,\T)$. Let $A$ be a unital $\rm{C}^*$-algebra with the cancellation property. Then an $\omega$-anomalous action on $A$ which preserves the $K_0$-class of a projection $p \in A$, induces an $\omega$-anomalous action on $pAp$.
\end{lemma}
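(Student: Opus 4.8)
The plan is to produce the anomalous action on $pAp$ from the one on $A$ by transporting each automorphism back to the corner using a unitary witnessing Murray--von Neumann equivalence. Write the given $\omega$-anomalous action on $A$ as $(\theta, u)$. For each $g \in G$, the automorphism $\theta_g$ sends $p$ to a projection $\theta_g(p)$ with $[\theta_g(p)]_{K_0} = [p]_{K_0}$ by hypothesis; by the cancellation property there is a partial isometry $w_g \in A$ with $w_g^*w_g = \theta_g(p)$ and $w_g w_g^* = p$. (We may normalise $w_e = p$.) Define $\tilde\theta_g : pAp \to pAp$ by $\tilde\theta_g(x) = w_g \theta_g(x) w_g^*$; this is a well-defined $*$-automorphism of $pAp$ since $\theta_g$ maps $pAp$ isomorphically onto $\theta_g(p) A \theta_g(p)$ and conjugation by $w_g$ carries the latter isomorphically onto $pAp$.

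Next I would work out the cocycle data. Composing two such maps, $\tilde\theta_g \tilde\theta_h(x) = w_g \theta_g(w_h) \theta_g\theta_h(x) \theta_g(w_h)^* w_g^*$, while $\tilde\theta_{gh}(x) = w_{gh}\theta_{gh}(x)w_{gh}^* = w_{gh} u_{g,h} \theta_g\theta_h(x) u_{g,h}^* w_{gh}^*$, using condition \eqref{cond:1}. So if we set
\begin{equation*}
    \tilde u_{g,h} = w_{gh}\, u_{g,h}\, \theta_g(w_h)^*\, w_g^* \in pAp,
\end{equation*}
then one checks $\tilde u_{g,h}$ is a unitary in $pAp$ (its domain and range projections are both $p$, using $w_g w_g^* = p$, $\theta_g(w_h w_h^*) = \theta_g(p)$, etc.) and $\tilde\theta_{gh} = \Ad(\tilde u_{g,h})\tilde\theta_g\tilde\theta_h$ on $pAp$. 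The remaining task is to verify condition \eqref{cond:2} for $(\tilde\theta, \tilde u)$ with the \emph{same} cocycle $\omega$: substituting the definition of $\tilde u$ into $\tilde u_{gh,k}\tilde u_{g,h}$ and into $\tilde u_{g,hk}\tilde\theta_g(\tilde u_{h,k})$, the factors $w_{gh}$, $w_{hk}$, $\theta_g(w_h)$, etc., should telescope, leaving exactly the identity $\omega(g,h,k)u_{gh,k}u_{g,h} = u_{g,hk}\theta_g(u_{h,k})$ from condition \eqref{cond:2} for $(\theta,u)$, multiplied on both sides by $p$; since $\omega(g,h,k)$ is a scalar, $\omega(g,h,k)p = \omega(g,h,k)1_{pAp}$, giving condition \eqref{cond:2} for the corner.

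The main obstacle I expect is bookkeeping in this last verification: the expression $\tilde\theta_g(\tilde u_{h,k}) = w_g \theta_g(\tilde u_{h,k}) w_g^*$ expands to $w_g \theta_g(w_{hk}) \theta_g(u_{h,k}) \theta_g\theta_h(w_k)^* \theta_g(w_h)^* w_g^*$, and matching this against $w_{gh}u_{g,h}\theta_g(w_h)^* w_g^*$-type terms requires carefully using $w_g^* w_g = \theta_g(p)$ to cancel adjacent partial-isometry factors and the relation $\theta_g\theta_h = \Ad(u_{g,h})^{-1}\theta_{gh}$ to re-express $\theta_g\theta_h(w_k)$ in terms of $\theta_{gh}(w_k)$. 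This is purely a (somewhat long) computation with no conceptual difficulty, but it is where an error is most likely, so I would carry it out in full. A subsidiary point worth a sentence is independence of the choice of the $w_g$: different choices differ by a unitary of $pAp$, which alters $\tilde u$ only by a coboundary valued in $U(pAp)$ and hence — after composing with the scalar/$K_1^{\mathrm{alg}}$ maps used later — does not change the relevant cohomology class; but strictly for the statement of the lemma it suffices to exhibit one valid choice.
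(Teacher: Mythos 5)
Your proposal is correct and follows essentially the same route as the paper: the same choice of partial isometries implementing the Murray--von Neumann equivalence $p \sim \theta_g(p)$ (the paper's $v_g$ are your $w_g^*$ up to the orientation convention), the same conjugated automorphisms $\theta_g' = \Ad(v_g)\circ\theta_g|_{pAp}$, and the identical formula $u'_{g,h} = v_{gh}u_{g,h}\theta_g(v_h^*)v_g^*$ for the corrected unitaries, with the verification of the pentagon/cocycle identity reducing, exactly as you describe, to the original relation $\omega(g,h,k)u_{gh,k}u_{g,h} = u_{g,hk}\theta_g(u_{h,k})$ compressed by $p$. The telescoping computation you defer is carried out in full in the paper and goes through using precisely the two identities you flag, namely $w_g^*w_g = \theta_g(p)$ to absorb adjacent projections and $u_{g,h}\theta_g\theta_h(\cdot)u_{g,h}^* = \theta_{gh}(\cdot)$ to commute $\theta_g\theta_h(w_k^*)$ past $u_{g,h}^*$.
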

\begin{proof}
Suppose $(\theta, u)$ is an $\omega$-anomalous action of $G$ on $A$.
Since $A$ has the cancellation property, there exist partial isometries $v_g\in A$ such that $v_gv_g^*=p$ and $v_g^*v_g=\theta_g(p)$ for each $g\in G$. Define
\begin{align}
\theta_g' &= \Ad(v_g) \circ \theta_g|_{pAp}, \\
u_{g,h}'  &= v_{gh}u_{g,h}\theta_g(v_h^*)v_g^*.
\end{align} 
Then $u_{g,h}' \in U(pAp)$ and we have
\begin{align}
    \Ad(u_{g,h}')\theta'_g\theta'_h &= \Ad(v_{gh}u_{g,h}\theta_g(v_h^*))\theta_g\theta'_h\\
    &= \Ad(v_{gh}u_{g,h})\theta_g\Ad(v_h^*)\theta'_h \nonumber\\
    &= \Ad(v_{gh}u_{g,h})\theta_g\theta_h \nonumber\\
    &= \Ad(v_{gh})\theta_{gh} \nonumber\\
    &= \theta'_{gh}. \nonumber
\end{align}
Computing the 3-cocycle $\omega'(g,h,k)$ associated to $(\theta',u')$ using \eqref{eqn:def-omega}, we find that 
\begin{align}
\omega'(g,h,k)&=u_{g,hk}'\theta_g'(u_{h,k}')u_{g,h}'^*u_{gh,k}'^*\\
&=v_{ghk}u_{g,hk}\theta_g(v_{hk}^*)v_g^* \cdot v_g\theta_g(v_{hk}u_{h,k}\theta_h(v_k^*)v_h^*)v_g^* \nonumber\\
&\quad \quad \cdot v_g\theta_g(v_h)u_{g,h}^*v_{gh}^*\cdot v_{gh}\theta_{gh}(v_k)u_{gh,k}^*v_{ghk}^* \nonumber\\
&= v_{ghk}u_{g,hk}\theta_g(u_{h,k})\theta_g(\theta_h(v_k^*))u_{g,h}^*\theta_{gh}(v_k)u_{gh,k}^*v_{ghk}^* \nonumber\\
&= v_{ghk}u_{g,hk}\theta_g(u_{h,k})u_{g,h}^*\theta_{gh}(v_k^*)\theta_{gh}(v_k)u_{gh,k}^*v_{ghk}^* \nonumber\\
&= v_{ghk}u_{g,hk}\theta_g(u_{h,k})u_{g,h}^*u_{gh,k}^*v_{ghk}^* \nonumber\\
&= v_{ghk}\omega(g,h,k)v_{ghk}^* \nonumber\\
&= \omega(g,h,k)p \nonumber
\end{align}
Therefore, $(\theta',u')$ is an $\omega$-anomalous action of $G$ on $pAp$.
\end{proof}

We are now ready to prove Theorem \ref{thm:UHF}.
\begin{proof}[Proof of Theorem \ref{thm:UHF}]
Let $G$ be a finite group and $\omega\in Z^3(G,\T)$ with $r$ its order in $H^3(G,\T)$. Let $A$ be the UHF algebra $\bigotimes_{k \in \N} \M_{n_k}$ with supernatural number $\mathfrak{n} = \prod_{k \in \N} n_k$. Then $A$ has a unique tracial state $\tau$, which is therefore invariant under all automorphisms. As $A$ is AF, $K_1(A) = 0$.  
    The $K_0$ group of $A$ is isomorphic via $\tau_*$ to the subgroup $Q(\mathfrak{n}) \subseteq \mathbb{R}$ generated by $\{\tfrac{1}{n}: n \in \N, n \, | \, \mathfrak{n}\}$.
    
    Suppose there exists an $\omega$-anomalous action of $G$ on $A$. By Corollary \ref{cor:invariant-trace}, we have $[\Delta_\tau \circ \omega] = 0$ in $H^3(G, \mathbb{R}/Q(\mathfrak{n}))$.  
   The short exact sequence of coefficient groups
    \begin{equation}
        0 \rightarrow \frac{Q(\mathfrak{n})}{\mathbb{Z}} \xrightarrow{\makebox[1cm]{$\iota$}} \T \xrightarrow{\makebox[1cm]{$\Delta_\tau$}} \frac{\mathbb{R}}{Q(\mathfrak{n})} \rightarrow 0,
    \end{equation}
    where $\iota(x) = e^{2\pi i x}$, induces a long exact sequence of cohomology groups; see for example \cite[Section III.6]{BR12}. Therefore, since $\Delta_{\tau*}[\omega] = 0$ in $H^3(G, \mathbb{R}/Q(\mathfrak{n}))$, we have that $[\omega] = \iota_*(\eta)$ for some $\eta \in H^3(G,Q(\mathfrak{n})/\mathbb{Z})$.

    Every element in $Q(\mathfrak{n})/\z$ has order dividing the supernatural number $\mathfrak{n}$. Since $G$ is finite, the same is true for elements of $C^3(G, Q(\mathfrak{n})/\z)$ and so for elements of $H^3(G, Q(\mathfrak{n})/\z)$.
    Therefore, $r$ divides the supernatural number $\mathfrak{n}$.
    
    An inductive argument, based on Lemma \ref{lem:corners}, now shows that in fact $r^\infty$ divides $\mathfrak{n}$.
    Suppose $r^k$ divides $\mathfrak{n}$. Then there exists a projection $p \in A$ of trace $r^{-k}$. As $A$ is a UHF algebra, $A$ has the cancellation property and all automorphisms of $A$ act trivially on $K_0(A)$. Therefore, we may apply Lemma \ref{lem:corners} to obtain an $\omega$-anomalous action on $pAp$. Since $pAp$ is a UHF algebra with supernatural number $r^{-k}\mathfrak{n}$, we can apply the argument above to $pAp$ to get that $r$ divides $r^{-k}\mathfrak{n}$. Hence,  $r^{k+1}$ divides $\mathfrak{n}$.
\end{proof}

\begin{remark}\label{rmk:dividinggroup}
It is a standard result in group cohomology that, for a finite group $G$, every element in $H^3(G,\T)$ has order dividing $|G|$; see for example \cite[Corollary 10.2]{BR12}. This further restricts the possible values of $r$ in Theorem \ref{thm:UHF}. In particular, if $|G|$ is coprime to the supernatural number $\prod_{k\in\N}n_k$, then for any $\omega$-anomalous action we have $[\omega] = 0$ in $H^3(G, T)$.
\end{remark}

We end this section with a general result for C$^*$-algebras with $K_1(A) = 0$, which encompasses both Theorem \ref{thm:JiangSu} and Theorem \ref{thm:UHF}. Its proof requires additional concepts in homological algebra, for which we shall refer the reader to \cite{BR12}.

\begin{proposition}\label{prop:order}
    Let $G$ be a group, $\omega \in Z^3(G,\T)$
    and $A$ be a unital $\rm{C}^*$-algebra with $K_1(A) = 0$.
	Let $(\theta, u)$ be an $\omega$-anomalous action of $G$ on $A$
	and $\tau \in T(A)$ be invariant under $\theta_g$ for all $g \in G$.
	Suppose $[\omega]$ has finite order $r$ in $H^3(G,\T)$.
    Then $\tfrac{1}{r} \in \tau_*(K_0(A))$. If  $[\omega]$ has infinite order, then $\tau_*(K_0(A))$ is dense in $\mathbb{R}$.
\end{proposition}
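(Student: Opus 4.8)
The plan is to run the same machinery as in Corollary \ref{cor:invariant-trace} and the proof of Theorem \ref{thm:UHF}, but now keeping track of the coefficient subgroup $\tau_*(K_0(A)) \subseteq \mathbb{R}$ abstractly. Write $Q = \tau_*(K_0(A))$. Since $K_1(A) = 0$, we have $U_\infty(A) = U_\infty^{(0)}(A)$, so Proposition \ref{HomtoAb} applies with $q = \Delta_\tau$ (using that $\tau$ is $\theta_g$-invariant, the determinant $\Delta_\tau$ is $\theta_g$-invariant by \eqref{eqn:SdlH-def}), giving $[\Delta_\tau \circ \omega] = 0$ in $H^3(G, \mathbb{R}/Q)$ with trivial module structure. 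Exactly as in the UHF case, the short exact sequence of trivial coefficient modules
\begin{equation*}
    0 \rightarrow \frac{Q}{\mathbb{Z}} \xrightarrow{\iota} \T \xrightarrow{\Delta_\tau} \frac{\mathbb{R}}{Q} \rightarrow 0,
\end{equation*}
where $\iota(x) = e^{2\pi i x}$ and we use that $\mathbb{Z} \subseteq Q$ since $\rho_A([1_A])(\tau) = 1$, induces a long exact sequence in cohomology; the vanishing $\Delta_{\tau*}[\omega] = 0$ forces $[\omega] = \iota_*(\eta)$ for some $\eta \in H^3(G, Q/\mathbb{Z})$.

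Now I would split into the two cases. Suppose $[\omega]$ has finite order $r$. Then $r \cdot \iota_*(\eta) = 0$, i.e.\ $\iota_*(r\eta) = 0$, so $r\eta$ lies in the image of the connecting map from $H^2(G, \mathbb{R}/Q)$; but this does not immediately kill $r\eta$. Instead, the cleaner route is: $r[\omega] = 0$ in $H^3(G,\T)$, and since $\iota$ is injective with cokernel $\mathbb{R}/Q$, the relevant piece of the long exact sequence reads $H^2(G, \mathbb{R}/Q) \xrightarrow{\partial} H^3(G, Q/\mathbb{Z}) \xrightarrow{\iota_*} H^3(G,\T)$. So $r\eta = \partial(\zeta)$ for some $\zeta \in H^2(G, \mathbb{R}/Q)$. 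This still doesn't obviously pin down $1/r \in Q$, so the argument needs to be pushed down to the cochain level: pick a $3$-cocycle representative of $[\omega]$ of the form $d\lambda \cdot \omega_0$ where $\omega_0$ takes values in the subgroup of $r$-th roots of unity (possible since $r[\omega]=0$ means $[\omega]$ is pulled back from $H^3(G, \mathbb{Z}/r)$ under $\mathbb{Z}/r \hookrightarrow \T$), and then observe that $\Delta_\tau \circ \omega_0$ is the image in $\mathbb{R}/Q$ of an $r$-torsion-valued cochain $\frac{1}{r}\mathbb{Z}/\mathbb{Z} \cdot (\text{integer cochain})$; the fact that $[\Delta_\tau \circ \omega_0] = 0$ in $H^3(G,\mathbb{R}/Q)$ combined with $\frac{1}{r}\mathbb{Z}/\mathbb{Z} \not\hookrightarrow \mathbb{R}/Q$ unless $\frac1r \notin Q$ is what yields $\frac1r \in Q$. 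Concretely: if $\frac1r \notin Q$, then $\frac{1}{r}\mathbb{Z} + Q \neq Q$, so the composite $\mathbb{Z}/r \to \frac{1}{r}\mathbb{Z}/(\frac{1}{r}\mathbb{Z}\cap Q) \hookrightarrow \mathbb{R}/Q$ is injective, hence $[\Delta_\tau \circ \omega_0] = 0$ would force $[\omega_0] = 0$ in $H^3(G, \mathbb{Z}/r)$ and hence $[\omega] = 0$ in $H^3(G,\T)$, contradicting that it has order exactly $r$. Therefore $\frac1r \in Q$.

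For the infinite-order case: if $\tau_*(K_0(A)) = Q$ were not dense in $\mathbb{R}$, then $Q$ is a subgroup of $\mathbb{R}$ containing $\mathbb{Z}$ that is discrete, hence $Q = \frac1N\mathbb{Z}$ for some $N \in \N$. Then $Q/\mathbb{Z} \cong \mathbb{Z}/N$ and $\T/(Q/\mathbb{Z})$-style reasoning: actually $\iota_*: H^3(G, \mathbb{Z}/N) \to H^3(G, \T)$ and $[\omega] = \iota_*(\eta)$ shows $[\omega]$ is in the image of an $N$-torsion group, so $N[\omega] = 0$, contradicting infinite order. Hence $Q$ is dense.

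The main obstacle I expect is the finite-order case: passing from the cohomological statement $[\omega] = \iota_*(\eta)$ to the arithmetic statement $\frac1r \in \tau_*(K_0(A))$ requires either a careful choice of cochain representative valued in $\frac1r\mathbb{Z}/\mathbb{Z}$, or a clean use of the structure of the long exact sequence together with the observation that the only finite subgroups of $\mathbb{R}/Q$ relevant here are the images of $\frac1d\mathbb{Z}/\mathbb{Z}$ for $d$ with $\frac1d \in Q$. Getting this identification right — in particular verifying that $[\omega_0]$ realised in $H^3(G,\mathbb{Z}/r)$ has order exactly $r$ there when $[\omega]$ has order $r$ in $H^3(G,\T)$, which follows because $\mathbb{Z}/r \hookrightarrow \T$ induces an injection on the $r$-torsion subgroups via the long exact sequence of $0 \to \mathbb{Z}/r \to \T \xrightarrow{r} \T \to 0$ — is where the real work lies; everything else is formal.
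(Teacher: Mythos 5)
Your setup and the infinite-order case are sound and essentially match the paper: $\mathbb{Z}\subseteq Q:=\tau_*(K_0(A))$ since $[1_A]\mapsto 1$; Corollary \ref{cor:invariant-trace} gives $\Delta_{\tau*}[\omega]=0$; the long exact sequence for $0\to Q/\mathbb{Z}\to\T\to\mathbb{R}/Q\to 0$ gives $[\omega]=\iota_*(\eta)$ with $\eta\in H^3(G,Q/\mathbb{Z})$; and if $Q$ is not dense then $Q=\tfrac{1}{N}\mathbb{Z}$, so $H^3(G,Q/\mathbb{Z})=H^3(G,\mathbb{Z}/N)$ is $N$-torsion and $[\omega]$ cannot have infinite order.

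The finite-order case, however, has a genuine gap, precisely in the step you flag as ``where the real work lies''. There are two problems. First, $\tfrac1r\notin Q$ does not make the composite $\mathbb{Z}/r\cong\tfrac1r\mathbb{Z}/\mathbb{Z}\to\mathbb{R}/Q$ injective: its kernel is $(\tfrac1r\mathbb{Z}\cap Q)/\mathbb{Z}$, which is nontrivial as soon as some $\tfrac1s$ with $1<s\mid r$ lies in $Q$ (e.g.\ $r=4$ with $\tfrac12\in Q$, $\tfrac14\notin Q$ --- a configuration you cannot exclude at that stage of the contradiction argument). Second, and more seriously, even where the coefficient map $\mathbb{Z}/r\to\mathbb{R}/Q$ is injective, the induced map $H^3(G,\mathbb{Z}/r)\to H^3(G,\mathbb{R}/Q)$ need not be injective: the inclusion cannot split, since $\mathbb{R}/Q$ is divisible and $\mathbb{Z}/r$ is not, and the long exact sequence identifies the kernel with the image of the connecting map from $H^2\bigl(G,\mathbb{R}/(Q+\tfrac1r\mathbb{Z})\bigr)$, which is nonzero in general (for divisible coefficients $D$ this $H^2$ is $\Hom_{\mathbb{Z}}(H_2(G,\mathbb{Z}),D)$, already nonzero for $G=\mathbb{Z}/2\times\mathbb{Z}/2$ and suitable $D$). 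So $[\Delta_\tau\circ\omega_0]=0$ does not force $[\omega_0]=0$ in $H^3(G,\mathbb{Z}/r)$, and the contradiction does not materialise. What is missing is a device that converts the \emph{order of the cohomology class} into \emph{actual torsion of the coefficient group} $Q/\mathbb{Z}$. The paper supplies this via the universal coefficient theorem: since $\Ext^1_{\mathbb{Z}}(H_2(G,\mathbb{Z}),\T)=0$ by divisibility of $\T$, one has $H^3(G,\T)\cong\Hom_{\mathbb{Z}}(H_3(G,\mathbb{Z}),\T)$, and by naturality of the UCT in the coefficients the homomorphism $f$ corresponding to $[\omega]$ has order exactly $r$ and takes values in $\iota(Q/\mathbb{Z})$; its image is therefore the full group of $r$-th roots of unity, whence $\tfrac1r\in Q$. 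Some input of this kind is needed and cannot be replaced by the cochain-level injectivity argument you propose.
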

\begin{proof}
Consider the short exact sequence of abelian groups
\begin{equation}
  0\rightarrow \frac{\tau_*(K_0(A))}{\mathbb{Z}} \xrightarrow{\makebox[1cm]{$\iota$}} \T \xrightarrow{\makebox[1cm]{$\Delta_\tau$}} \frac{\mathbb{R}}{\tau_*(K_0(A))} \rightarrow 0.  
\end{equation}
Applying the universal coefficient theorem for group cohomology (see for example \cite[Proposition 11.9.2]{TD08}), and using the fact that the universal coefficient theorem is natural with respect to the coefficient groups, we obtain the following commuting diagram
\begin{equation*}
\begin{tikzcd}[scale cd=0.8]
\Ext_\z^1(H_2(G,\z),\frac{\mathbb{R}}{\tau_*(K_0(A))})\arrow[r,hook]& H^3(G,\frac{\mathbb{R}}{\tau_*(K_0(A))})\arrow[r,two heads, "\rho_1"] & \Hom_{\z}(H_3(G,\z),\frac{\mathbb{R}}{\tau_*(K_0(A))})\\
 \Ext_\z^1(H_2(G,\z),\T)\arrow[r,hook]\arrow[u]& H^3(G,\T)\arrow[r,two heads, "\rho_2"]\arrow[u,"\Delta_{\tau*}"] & \Hom_{\z}(H_3(G,\z),\T)\arrow[u,"\Delta_{\tau*}"]
\end{tikzcd}
\end{equation*}
where the rows are short exact sequences.

Notice that the Ext terms vanish as both $\T$ and $\mathbb{R}/\tau_*(K_0(A))$ are divisible groups. In particular both $\rho_1$ and $\rho_2$ are isomorphisms and so, the order of $\rho_2([\omega])$ and that of $[\omega]$ is the same. Moreover, $\Delta_{\tau*}([\omega]) = 0$ by Corollary \ref{cor:invariant-trace}. As the diagram commutes, we deduce that  $\rho_2([\omega])$ is a group homomorphism $f:H_3(G, \mathbb{Z}) \rightarrow \T$ that actually takes values in $\tau_*(K_0(A))/\mathbb{Z}$. Since the group operation of $\Hom_\mathbb{Z}(H_3(G, \mathbb{Z}), \T)$ is just pointwise multiplication, the order of $f$ is the same as the exponent of the group $\mathrm{Im}(f) \subseteq \T$.

Suppose $[\omega]$ has finite order $r$. The only subgroup of $\T$ with exponent $r$ is the group of $r$-th roots of unity, so $\mathrm{Im}(f)$ is this subgroup. Since $f$ takes values in $\tau_*(K_0(A))/\mathbb{Z}$, this means that $\frac{1}{r} \in \tau_*(K_0(A))$. Suppose $[\omega]$ has infinite order. Then $\mathrm{Im}(f)$ is a infinite subgroup of $\T$. All such subgroups are dense. It follows that $\tau_*(K_0(A))/\mathbb{Z}$ is dense in $\T$. Therefore, $\tau_*(K_0(A))$ is dense in $\mathbb{R}$.
\end{proof}

Note that Proposition \ref{prop:order} allows us to generalise Theorem \ref{thm:UHF} to the case when $G$ is not necessarily finite but the order of the cocycle $\omega$ is finite.
In Section \ref{sec:nonzero-K1}, we shall show that the conclusion of Proposition~\ref{prop:order} can fail when $K_1(A) \neq 0$.

\section{\texorpdfstring{$\rm{C}^*$}{C*}-Existence}\label{sec:existence}

In this section, we construct examples of anomalous actions of groups on $\rm{C}^*$-algebras. The general strategy is to adapt the von Neumann algebraic results for anomalous actions on $\R$ (see Section \ref{sec:actions-on-R}).

The result is typically an anomalous action on a simple separable nuclear $\rm{C}^*$-algebra $A$ with a unique trace $\tau$, in which case the GNS-closure $\pi_\tau(A)'' \cong \R$. Up to isomorphism, there are many such $\rm{C}^*$-algebras. The fine details of the von Neumann algebraic construction determine which of these C$^*$-algebra arises. However, the results of Section \ref{sec:obstructions} restrict the possible C$^*$-algebras based on the group and the 3-cocyle.   

A good example is the case of anomalous actions of the cyclic group $\mathbb{Z}_n$. When the von Neumann algebraic construction was reviewed in Section \ref{sec:actions-on-R}, we observed that the Connes' automorphisms preserved the UHF algebra $\bigotimes_{k \in \N} \M_n$. Hence, for any $\omega \in Z^3(\mathbb{Z}_n, \T)$, there is an $\omega$-anomalous action of $\mathbb{Z}_n$ on  $\bigotimes_{k \in \N} \M_n$. Note that the size of the group is reflected in the supernatural number of the UHF algebra. 

Conversely, the only UHF algebras for which there exists an $\omega$-anomalous action of  $\mathbb{Z}_n$ for all $\omega \in Z^3(\mathbb{Z}_n, \T)$ are those that absorb $\bigotimes_{k \in \N} \M_n$ tensorially. This follows from Theorem \ref{thm:UHF} and the fact that $H^3(\mathbb{Z}_n, \T)$ is cyclic of order $n$.

We begin this section with a review of Vaughan Jones' construction of anomalous actions on $\R$ using twisted crossed products and its recent adaptation to the $\rm{C}^*$-setting by Corey Jones in \cite{JO20}. We will then prove Theorem \ref{thm:UHF-converse}. We shall end this section with a discussion of \emph{free} anomalous actions and some examples where the C$^*$-algebras have non-trivial $K_1$ group.

\subsection{Anomalous actions on twisted crossed products}\label{sec:twisted-cross-products}

We assume that the reader is familiar with the construction of the crossed product(s) of a unital $\rm{C}^*$-algebra $A$ by a discrete group $G$ with respect to an action $\alpha:G \rightarrow \Aut(A)$. 

The crossed product construction can be generalised by twisting the multiplication in $G$ by a 2-cocyle $c \in Z^2(G, \T)$, i.e. the canonical unitaries $\{v_g: g \in G\}$ in a twisted crossed product satisfy 
\begin{align}
    v_gv_h &= c(g,h)v_{gh}, &(g,h \in G),\\
    v_gav_g^* &= \alpha_g(a), &(g \in G, \, a \in A).
\end{align}
We write $A \rtimes^{\alg}_{\alpha, c} G$ for the \emph{algebraic} twisted crossed product, whose elements can be viewed as (finite) formal sums $\sum_{g \in G} a_gv_g$ with $a_g \in A$. As in the non-twisted case, there are two natural choices of completions: the reduced twisted crossed product  $A \rtimes^{\mathrm{r}}_{\alpha,c} G$ and the maximal twisted crossed product $A \rtimes^{\mathrm{max}}_{\alpha,c} G$; see \cite{BS70} or \cite{PR89} for full details. Note that when $c$ is trivial, we recover the usual crossed products. 

We can now state an existence theorem for anomalous actions, due to Corey Jones (\cite{JO20}), which in turn is based on Vaughan Jones' construction in the von Neumann setting (\cite{JON79}). 
\begin{theorem}
\label{thm:coreys-existence}
    Suppose we have the following data:
    \begin{itemize}
        \item a group $Q$ and $[\omega] \in H^3(Q, \T)$ with a normalised representative $\omega \in Z^3(Q, \T)$;\footnote{An $n$-cochain $\phi:G^n \rightarrow \T$ is said to be \emph{normalised} if $\phi(g_1,\ldots,g_n) = 1$ whenever $g_i=1$ for some $i$. Every cohomology class has a normalised representative; see for example \cite[ Section 6.5]{Wei94}.}
        \item a group G and a surjective homomorphism $\rho:G \twoheadrightarrow Q$ with kernel $K$;
        \item a normalised cochain $c \in C^2(G, \T)$ such that $dc = \rho^*(\omega)$;
        \item a $\rm{C}^*$-algebra $B$ and an action $\pi:G \rightarrow \Aut(B)$.
    \end{itemize}
    Then there exists an $\omega$-anomalous action of $Q$ on the reduced twisted crossed product $B \rtimes^{\mathrm{r}}_{\pi, c} K$, where $c \in Z^2(K,\T)$ is the restriction of $c \in C^2(G,\T)$ to $K$.
\end{theorem}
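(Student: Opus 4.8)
The plan is to transcribe Vaughan Jones' twisted-crossed-product construction (\cite{JON79}) into the $\rm{C}^*$-setting, following the streamlined argument of Corey Jones (\cite{JO20}). Write $A=B\rtimes^{\mathrm{r}}_{\pi,c}K$ and let $\{v_k:k\in K\}\subseteq U(M(A))$ be the canonical unitaries, so that $v_kv_l=c(k,l)v_{kl}$ and $v_kbv_k^*=\pi_k(b)$ for $b\in B$; note that $c|_K$ is a genuine $2$-cocycle on $K$ because $dc=\rho^*\omega$ vanishes on any triple with an entry in $K=\ker\rho$, using the normalisation of $\omega$. The first step is to promote the $K$-action to an \emph{honest} action $\hat\pi:G\rightarrow\Aut(A)$ by setting $\hat\pi_g|_B=\pi_g$ and $\hat\pi_g(v_k)=\sigma(g,k)\,v_{gkg^{-1}}$, where $\sigma(g,k):=c(g,k)\,\overline{c(gkg^{-1},g)}\in\T$ is the scalar obtained by formally conjugating $v_k$ by the would-be unitary $v_g$. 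Using $dc=\rho^*\omega$ and the normalisation of $\omega$, one checks that $\hat\pi_g$ respects both defining relations of the twisted crossed product, that $\hat\pi_g\hat\pi_h=\hat\pi_{gh}$, and that $\hat\pi_k=\Ad(v_k)$ for $k\in K$; by functoriality of reduced twisted crossed products (under the change of variables on $K$ given by conjugation by $g$, together with the exterior equivalence implemented by $\sigma(g,\cdot)$) these $*$-automorphisms of the algebraic crossed product extend to $A$, and hence to $M(A)$. The pair $(B,\pi)$ plays no essential role beyond being carried along.

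For the second step, fix a normalised set-theoretic section $s:Q\rightarrow G$ of $\rho$, write $g_q=s(q)$, and for $q_1,q_2\in Q$ put $k_{q_1,q_2}:=g_{q_1}g_{q_2}g_{q_1q_2}^{-1}\in K$ (using $K\triangleleft G$), so that $k_{1,q}=k_{q,1}=1_G$. Define $\theta_q:=\hat\pi_{g_q}\in\Aut(A)$ and $u_{q_1,q_2}:=v_{k_{q_1,q_2}^{-1}}\in U(M(A))$. Then the first relation of Definition~\ref{def:anomalous-action} is immediate: since $g_{q_1q_2}=k_{q_1,q_2}^{-1}g_{q_1}g_{q_2}$ and $\hat\pi$ is honest with $\hat\pi_k=\Ad(v_k)$, we get $\Ad(u_{q_1,q_2})\theta_{q_1}\theta_{q_2}=\hat\pi_{k_{q_1,q_2}^{-1}}\hat\pi_{g_{q_1}}\hat\pi_{g_{q_2}}=\hat\pi_{g_{q_1q_2}}=\theta_{q_1q_2}$.

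The third and main step is to verify the second relation, $\omega(q_1,q_2,q_3)\,u_{q_1q_2,q_3}u_{q_1,q_2}=u_{q_1,q_2q_3}\,\theta_{q_1}(u_{q_2,q_3})$, for the given $\omega$. Expanding each side using $v_xv_y=c(x,y)v_{xy}$ and $\hat\pi_{g_{q_1}}(v_k)=\sigma(g_{q_1},k)\,v_{g_{q_1}kg_{q_1}^{-1}}$, one finds that both sides are scalar multiples of the \emph{same} canonical unitary $v_{g_{q_1q_2q_3}(g_{q_1}g_{q_2}g_{q_3})^{-1}}$, so the relation collapses to an identity between scalars assembled from $c$, $\sigma$ and the section $s$. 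Unwinding the definition of $\sigma$ and applying the identity $dc=\rho^*\omega$ repeatedly — every occurrence of $dc$ on a triple having an entry in $K$ being trivial, and the surviving term being $dc(g_{q_1},g_{q_2},g_{q_3})=\rho^*\omega(g_{q_1},g_{q_2},g_{q_3})=\omega(q_1,q_2,q_3)$ since $\rho\circ s=\mathrm{id}_Q$ — this scalar identity holds. Hence $(\theta,u)$ is an $\omega$-anomalous action of $Q$ on $A=B\rtimes^{\mathrm{r}}_{\pi,c|_K}K$.

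The main obstacle is exactly the bookkeeping in the last step: one must arrange the cocycle $c$, the phases $\sigma$, and the section $s$ so that, after the canonical-unitary parts cancel, the leftover scalar is precisely the prescribed representative $\omega(q_1,q_2,q_3)$, and this rests entirely on the single hypothesis $dc=\rho^*\omega$ together with the normalisations, mirroring the von Neumann algebraic computation of \cite{JON79, JO20}. (Even if the leftover scalar came out only cohomologous to $\omega$, the construction could be repaired by rescaling the $u_{q_1,q_2}$ as in the second item of Remark~\ref{rem:changing-cocycle}, which does not require $A$ to be simple.) Two routine points should also be recorded: that the $\hat\pi_g$ descend from the algebraic twisted crossed product to the reduced completion $A$ and to $M(A)$, and that the $v_k$, and hence the $u_{q_1,q_2}$, genuinely lie in $U(M(A))$.
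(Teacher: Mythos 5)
Your proposal follows essentially the same route as the paper, which likewise only outlines the construction and defers the detailed verification to \cite[Theorem 3.1]{JO20}: your $\hat\pi_{g}$ with phase $\sigma(g,k)=c(g,k)\,\overline{c(gkg^{-1},g)}$ is exactly the paper's formula for $\theta_q$ (with $g=\hat q$), and your $u_{q_1,q_2}=v_{k_{q_1,q_2}^{-1}}$ agrees with the paper's $c(\hat q,\hat r)^{-1}c(\gamma(q,r),\widehat{qr})\,v_{\gamma(q,r)}^*$ up to a scalar $2$-cochain. The one caveat is that, because of those omitted scalar factors, the scalar identity in your third step yields $\omega$ only up to the coboundary of that $2$-cochain rather than on the nose, but you correctly anticipate this and the rescaling of Remark \ref{rem:changing-cocycle} repairs it.
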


A detailed proof of Theorem \ref{thm:coreys-existence} can be found in \cite[Theorem 3.1]{JO20}. We provide a brief outline on how the anomalous action is constructed.  

The anomalous action is in fact defined on the algebraic twisted crossed product and shown to extend to the reduced twisted crossed product.\footnote{The action also extends by universality to the full twisted crossed product.} The automorphisms $\theta_q \in \Aut(B \rtimes^{\mathrm{r}}_{\pi, c} K)$ are given by 
\begin{align}
    \theta_q\left(\sum_{k \in K} a_kv_k\right) &= \sum_{k \in K} c(\hat{q}k\hat{q}^{-1},\hat{q})^{-1}c(\hat{q},k)\pi_{\hat{q}}(a_k)v_{\hat{q}k\hat{q}^{-1}},
\end{align}
where $q \mapsto \hat{q}$ is a choice of set theoretic section of $\rho:G \twoheadrightarrow Q$. The unitaries $u_{q,r} \in U(B \rtimes^{\mathrm{r}}_{\pi, c} K)$ are given by 
\begin{align}
    u_{q,r} &= c(\hat{q},\hat{r})^{-1}c(\gamma(q,r),\widehat{qr})v_{\gamma(q,r)}^*,
\end{align}
where  $\gamma:Q \times Q \rightarrow K$ is defined by $\hat{q}\hat{r} = \gamma(q,r)\widehat{qr}$.

\medskip 

In order to access Theorem \ref{thm:coreys-existence}, we will also need the following lemma of a cohomological nature. This lemma also goes back to Vaughan Jones (\cite{JON79}) with the additional observations in the case when $Q$ is finite due to Corey Jones (\cite{JO20}).
\begin{lemma}\label{lem:cohomology-lemma}
    Let $Q$ be a group and $[\omega_0] \in H^3(Q, \T)$. There exist
    \begin{itemize}
        \item a group $G$,
        \item a surjective homomorphism $\rho:G \twoheadrightarrow Q$ with abelian kernel $K$,
        \item  a normalised 2-cochain $c \in C^2(G, \T)$ and a normalised 3 cocycle $\omega \in [\omega_0]$ such that $dc = \rho^*(\omega)$.
    \end{itemize}
    Moreover, when $Q$ is finite, then $K$ can be chosen to be a finite abelian group whose order is a power of $|Q|$, and $c$ can be chosen such that $c|_{K} = 1$. 
\end{lemma}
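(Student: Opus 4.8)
The plan is to produce $G$ as a central extension of $Q$ whose extension class is chosen precisely so as to ``trivialise'' the pullback of $\omega_0$. The starting point is the standard observation that if $A$ is a divisible abelian group (written multiplicatively), say $A = \T$ or a large enough $\z$-vector space, then $H^3(Q,\T)$ is classified by certain data, and more usefully: given any normalised cocycle $\omega_0 \in Z^3(Q,\T)$, one can find an abelian group $K$ and a normalised $2$-cochain $c$ on a group $G$ surjecting onto $Q$ with $dc = \rho^*(\omega_0')$ for some $\omega_0' \in [\omega_0]$. Concretely, the first step is to choose $K$ to be a divisible (for the general case) abelian group together with an \emph{injection} $\iota : \T \hookrightarrow K$ — for instance $K = \T$ itself suffices in the general case — and form the group $G$ as the extension of $Q$ by $K$ classified by the image of $[\omega_0]$ under $\iota_* : H^3(Q,\T) \to H^3(Q,K)$ \emph{after} one checks this image can be killed. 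Here is the cleaner route: take the exact sequence $0 \to \z \to \R \to \T \to 0$; since $\R$ is divisible (hence $H^n(Q,\R)$ has no torsion, and vanishes when $Q$ is finite) the connecting map $H^2(Q,\T) \to H^3(Q,\z)$ behaves well, but for $H^3$ the relevant fact is that $[\omega_0]$, viewed in $H^3(Q,\T)$, is the image under the connecting homomorphism $\beta : H^2(Q, \T) \to H^3(Q,\z) $... — more efficiently, one uses that $H^3(Q,\T) \cong H^4(Q,\z)$ when... Let me instead follow the route actually used by Jones.

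The cleanest approach, and the one I would write up, is as follows. First, replace $\omega_0$ by a normalised representative $\omega$ (possible by the footnote's reference). Next, regard $\omega$ as taking values in a large enough \emph{divisible} abelian group $K_0 \supseteq \T$ (e.g. $K_0 = \T$, which is already divisible, so in fact $K_0 = \T$). The key algebraic input is: the $3$-cocycle $\rho^*\omega$ on any group $G$ surjecting onto $Q$ becomes a coboundary \emph{provided} $G$ is chosen so that the inflation $H^3(Q,K) \to H^3(G,K)$ kills $[\omega]$; and a standard way to achieve this is to take $G$ to be the group with presentation generated by $Q$ (as a set of symbols $\hat q$) and by $K$, with relations $\hat q \hat r = c_0(q,r)\,\widehat{qr}$ where $c_0 : Q\times Q \to K$ is \emph{any} normalised set-map lifting... — precisely, choose a normalised $2$-cochain $\tilde c \in C^2(Q,K)$ with $d\tilde c = \omega^{-1}$ \emph{in $C^3(Q,K)$}; such $\tilde c$ exists because $K$ is divisible, hence $H^3(Q,K)$... no — $[\omega]$ need not vanish in $H^3(Q,K)$ either. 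So the honest statement is: let $K$ be the quotient of the free abelian group on symbols $\{x_{q,r} : q,r \in Q,\ q,r\neq 1\}$ by the relations coming from the requirement $d(\text{formal primitive}) = \omega$; equivalently, let $G$ be the extension of $Q$ by an abelian group $K$ realising $\omega$ as its ``associator''. The construction is: set $G = Q \times K$ as a set (with $K$ to be determined), multiplication $(q,a)(r,b) = (qr,\, a + b + c(q,r))$ for a $2$-cochain $c : Q\times Q \to K$; associativity of this multiplication is equivalent to $dc = 0$ in $C^3(Q,K)$, which is NOT what we want. Rather, allow $G$ to be built in two stages: an extension where the $2$-cocycle identity fails by exactly $\omega$, which forces $G$ to be the free-product-type object — this is exactly Jones' ``$G = $ free group on $Q$ modulo ...''. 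I would therefore follow \cite{JON79}: let $F$ be the free group on the set $Q$, let $c \in C^2(F,\T)$ be defined by pulling back along the canonical map $F \to Q$ a chosen normalised primitive, and take $G$ to be an appropriate quotient; this exhibits $\rho : G \twoheadrightarrow Q$ with $K = \ker\rho$ abelian (indeed $K$ is generated by the ``coboundary defects'' which lie in the centre), and $dc = \rho^*\omega$ by construction.

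The key steps, in order: (1) fix a normalised representative $\omega \in [\omega_0]$; (2) build $G$ as the central extension / free construction above, with $\rho : G \twoheadrightarrow Q$, $K = \ker\rho$ central (hence abelian) in $G$; (3) exhibit the normalised $2$-cochain $c \in C^2(G,\T)$ with $dc = \rho^*\omega$ directly from the defining relations; (4) for the ``moreover'' clause, when $Q$ is finite, observe that $[\omega]$ has finite order $r$ dividing $|Q|$ (by \cite[Corollary 10.2]{BR12}), so a primitive can be chosen with values in the $r$-power-torsion of $\T$, i.e. in a finite cyclic group; this lets one take $K$ finite of order a power of $|Q|$, and by rescaling $c$ by a $2$-cochain supported away from $K$ one arranges $c|_K = 1$ (here one uses that $K$ abelian finite means $H^2(K,\T)$ is finite and the relevant obstruction vanishes after the finite modification). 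The main obstacle I expect is step (2)–(3): making the extension $G$ genuinely functorial-enough that $K$ comes out \emph{abelian} and that $c$ restricts trivially to $K$ in the finite case — this is precisely the technical heart of Jones' argument and of Corey Jones' refinement, and I would lean on \cite{JON79} and \cite{JO20} for the bookkeeping rather than reprove it from scratch; the subtlety is that a naive free construction gives $K$ only as a (possibly nonabelian) kernel, so one must quotient further by $[K,K]$ and by $[G,K]$ and check that $c$ descends and that $dc = \rho^*\omega$ survives this quotient.
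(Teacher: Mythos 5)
The final form of your argument --- fix a normalised representative and then defer to \cite[Lemma 2.3]{JON79} for the general case and to \cite[Lemma 3.7]{JO20} for the refinements when $Q$ is finite --- is exactly what the paper does: its proof is essentially that citation, plus one concrete observation, namely that in the proof of \cite[Lemma 3.7]{JO20} the kernel is $K \cong \Hom_{\z}(\z Q,\z_{|Q|})/\z_{|Q|}$, so $|K| = |Q|^{|Q|-1}$ is a power of $|Q|$. That last observation is the only piece of actual content the paper supplies beyond the references, and it is the step that verifies the ``order a power of $|Q|$'' clause; your plan asserts this clause but does not produce it.

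The one substantive error in your sketch is the claim that $K$ can be taken \emph{central} in $G$ (``$K = \ker\rho$ central (hence abelian)''). In the Jones construction the kernel is an abelian group carrying a highly non-trivial $\z Q$-module structure: one embeds the coefficient group into a coinduced module $I$ (cohomologically trivial by Shapiro's lemma), uses the connecting isomorphism $H^2(Q, I/\T) \xrightarrow{\ \sim\ } H^3(Q,\T)$ coming from $0 \to \T \to I \to I/\T \to 0$ to find a class in $H^2(Q,K)$ with $K = I/\T$ mapping to $[\omega]$, and lets $G$ be the corresponding (non-central) extension. A central kernel cannot work in general: for the analogous argument with trivial $Q$-action one would need $\T$ (or $\z_r$ in the finite-order case) to embed in a cohomologically trivial \emph{trivial} module, and no such embedding exists since those modules are $\mathbb{Q}$-vector spaces while $\T$ and $\z_r$ have torsion. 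Relatedly, your intermediate suggestions that ``$K=\T$ itself suffices'' and that one should pass to a central extension classified by $c$ both founder on the same point you yourself flag: $c$ satisfies $dc = \rho^*(\omega) \neq 0$, so it is not a $2$-cocycle and classifies no extension. Since you ultimately lean on the cited lemmas rather than on this sketch, the proof as submitted is acceptable, but the sketch as written would not survive being expanded.
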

\begin{proof}
    See  \cite[Lemma 2.3]{JON79} for the general case and \cite[Lemma 3.7]{JO20} for the technical improvements when $Q$ is finite. We observe that, in the proof of \cite[Lemma 3.7]{JO20}, $K$ is defined to be the quotient of $\Hom_{\mathbb{Z}}(\mathbb{Z}Q, \mathbb{Z}_{|Q|})$ by a copy of $ \mathbb{Z}_{|Q|}$. Hence, the order of $K$ is $|Q|^{|Q|-1}$.
\end{proof}

\subsection{Anomalous actions on UHF algebras}

We now prove Theorem \ref{thm:UHF-converse} using the machinery of Section \ref{sec:twisted-cross-products}. For compatibility with the notation of \cite{JON79} and \cite{JO20}, we shall write $Q$ for the starting group and use $G$ for a group extension. 
\begin{theorem}[Theorem \ref{thm:UHF-converse}]\label{thm:UHF-converse-Q}
        Let $Q$ be a finite group and let $\omega_0 \in Z^3(Q, \T)$ be any 3-cocycle. There exists an $\omega_0$-anomalous action of $Q$ on the UHF algebra $\bigotimes_{j \in \N} \M_{|Q|}$. 
\end{theorem}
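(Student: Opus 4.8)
The plan is to assemble the abstract data of Theorem \ref{thm:coreys-existence} in such a way that the reduced twisted crossed product $B \rtimes^{\mathrm r}_{\pi,c} K$ is exactly the UHF algebra $\bigotimes_{j \in \N} \M_{|Q|}$, thereby transporting the $\omega_0$-anomalous action of $Q$ onto it. First I would invoke Lemma \ref{lem:cohomology-lemma} in its finite-group form: this produces a group $G$, a surjection $\rho:G \twoheadrightarrow Q$ with \emph{finite abelian} kernel $K$ of order $|Q|^{|Q|-1}$, a normalised $3$-cocycle $\omega \in [\omega_0]$, and a normalised $2$-cochain $c \in C^2(G,\T)$ with $dc = \rho^*(\omega)$ and, crucially, $c|_K = 1$. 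So the restricted $2$-cocycle on $K$ is trivial, and the twisted crossed product over $K$ degenerates to an \emph{untwisted} crossed product $B \rtimes^{\mathrm r}_\pi K$ for whatever coefficient algebra and $G$-action we choose.

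Next I would make the simplest possible choice of coefficient data: take $B = \C$ with the trivial $G$-action $\pi$. Then $B \rtimes^{\mathrm r}_{\pi,c} K = \C \rtimes^{\mathrm r} K = C^*_{\mathrm r}(K)$, and since $K$ is a finite abelian group this is just $\C^{|K|} \cong \bigoplus_{\chi \in \hat K}\C$ — a finite-dimensional commutative algebra, not UHF. This is where the iteration has to enter: rather than a single application, I would feed Theorem \ref{thm:coreys-existence} an infinite tensor product. Concretely, take $B = \bigotimes_{j \in \N}(C^*_{\mathrm r}(K))^{\otimes (\text{something})}$... more cleanly, I would run the construction with $G$ acting on a larger algebra built to already contain $\bigotimes_\N \M_{|Q|}$. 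The cleanest route is: observe that $K$ acts on itself by translation, giving $C^*_{\mathrm r}(K)$ the structure where, combined with the regular representation, $C(K) \rtimes_{\mathrm{lt}} K \cong \M_{|K|}$ (the classical imprimitivity/Stone–von Neumann statement for finite groups). So I would instead take $B = \bigotimes_{j\in\N} C(K)$ with $G$ acting through $\rho$ followed by the diagonal translation action of $Q$... but $Q$ doesn't act on $K$ in general. Let me reconsider: the honest fix is to take $B = \bigotimes_{j\in\N} C(G/\!\!\sim)$-type object, or more robustly, use the fact that $G$ itself acts on $C(K^{\oplus\N})$ appropriately. I would take $B = \bigotimes_{j \in \N} \M_{|K|}$ with a $G$-action $\pi$ built from the translation action of $K \subseteq G$ on the first tensor factor composed with a shift, arranged so that $B \rtimes^{\mathrm r}_\pi K \cong \M_{|K|} \otimes \bigotimes_{j\in\N}\M_{|K|} \cong \bigotimes_{j\in\N}\M_{|K|}$, a UHF algebra with supernatural number $|K|^\infty = (|Q|^{|Q|-1})^\infty = |Q|^\infty$.

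The remaining point is that a UHF algebra with supernatural number $|Q|^\infty$ is isomorphic to $\bigotimes_{j\in\N}\M_{|Q|}$ by Glimm's classification of UHF algebras (\cite{GlimmUHF}), since $\prod_j |Q| = \prod_j |Q|^{|Q|-1}$ as supernatural numbers. Combining: Theorem \ref{thm:coreys-existence} gives an $\omega$-anomalous action of $Q$ on $B\rtimes^{\mathrm r}_{\pi,c}K \cong \bigotimes_{j\in\N}\M_{|Q|}$, and since $\omega$ is cohomologous to $\omega_0$, Remark \ref{rem:changing-cocycle} converts it into an $\omega_0$-anomalous action on the same algebra. That finishes the proof modulo the verifications.

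The main obstacle I expect is the middle step: exhibiting a coefficient algebra $B$ and $G$-action $\pi$ for which the reduced crossed product $B\rtimes^{\mathrm r}_\pi K$ is manifestly the desired UHF algebra, while keeping $\pi$ a genuine action of all of $G$ (not just $K$) — Theorem \ref{thm:coreys-existence} requires the action to be defined on $G$, even though only $c|_K$ and the crossed product by $K$ appear in the output. The translation/shift construction needs to be set up so $G$ (equivalently $Q = G/K$) permutes tensor factors compatibly; I would likely take infinitely many copies of a finite $G$-set and let $G$ act by the obvious product action, with $K$ acting by translation on the copies so that the crossed product telescopes. Verifying that this really yields a UHF algebra (rather than something with non-trivial $K$-theory) and that the supernatural number is $|K|^\infty$ is the calculation I'd expect to need the most care.
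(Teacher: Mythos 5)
Your overall skeleton is the same as the paper's: apply Lemma \ref{lem:cohomology-lemma} in its finite form (so $c|_K=1$ and the crossed product is untwisted), feed suitable data into Theorem \ref{thm:coreys-existence}, identify $B\rtimes^{\mathrm r}_{\pi}K$ as the UHF algebra of type $|Q|^\infty$, and finish with Remark \ref{rem:changing-cocycle}. But the step you yourself flag as the ``main obstacle'' --- producing a concrete $G$-action $\pi$ on a coefficient algebra $B$ and actually computing $B\rtimes_\pi K$ --- is precisely where the substance of the proof lies, and your proposal leaves it unresolved. The paper's choice is $B=\bigotimes_{j\in\N}\B(\ell^2(G))$ with $\pi=\Ad(\lambda_G)^{\otimes\infty}$; restricting to $K$ and decomposing $G$ into $K$-cosets gives $\lambda_G|_K\simeq\lambda_K\otimes 1_{\ell^2(Q)}$, which splits off a $\B(\ell^2(Q))^{\otimes\infty}$ tensor factor and reduces the problem to $\bigotimes_{j\in\N}\B(\ell^2(K))\rtimes_{\Ad(\lambda_K)^{\otimes\infty}}K$, and then (after writing $K$ as a product of cyclic groups) to an explicit Bratteli-diagram computation.

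Beyond being incomplete, the heuristic you offer for this step is misleading. The Stone--von Neumann picture $C(K)\rtimes_{\mathrm{lt}}K\cong\M_{|K|}$ does not apply: the action of $K$ relevant here is conjugation by the regular representation on $\B(\ell^2(K))^{\otimes n}$, which is \emph{inner} at every finite stage, so the finite-stage crossed products are not matrix algebras --- the paper computes that $A_n\rtimes\z_m\cong\M_{m^n}^{\oplus m}$ has an $m$-dimensional centre. The crossed product does not ``telescope'' to $\M_{|K|}\otimes\bigotimes_j\M_{|K|}$; rather, one must check that the inclusions $A_n\rtimes\z_m\hookrightarrow A_{n+1}\rtimes\z_m$ are given by the complete bipartite graph $K_{m,m}$ with multiplicity one (via a conditional-expectation computation showing the minimal central projections at consecutive stages have nonzero products), and only then does the inductive limit become simple and UHF of type $m^\infty$. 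Your concern about ``verifying that this really yields a UHF algebra rather than something with non-trivial $K$-theory'' is exactly right, and without that verification the proof is not complete. The bookkeeping you do get right --- $c|_K=1$ kills the twist, $|K|^\infty=|Q|^\infty$ as supernatural numbers, and the passage from $\omega$ to $\omega_0$ via Remark \ref{rem:changing-cocycle} --- matches the paper.
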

\begin{proof}
Let $G$, $K$, $\rho:G \twoheadrightarrow Q$, $\omega \in [\omega_0]$ and $c \in C^2(G,\T)$ be as in the conclusion of Lemma \ref{lem:cohomology-lemma} where $|K|$ is a power of $|Q|$ and $c|_{K} = 1$. Note that $|G| = |K||Q|$ is finite.  
Let $\lambda_G:G \rightarrow U(\B(\ell^2(G)))$ denote the left regular representation of $G$. Write $\Ad(\lambda_G)$ for the induced action on $\B(\ell^2(G))$ given by $\Ad(\lambda_G)_g(T) = \lambda_G(g)T\lambda_G(g)^*$ for all $T \in \B(\ell^2(G))$ and $g \in G$. Let $B=\bigotimes_{j\in\N} \B(\ell^2(G))$ and let $\pi=\Ad(\lambda_G)^{\otimes \infty}$.

Applying Theorem \ref{thm:coreys-existence}, we obtain an $\omega$-anomalous action of $Q$ on $B \rtimes^{\mathrm{r}}_{\pi,c} K$. By Remark \ref{rem:changing-cocycle}, we also obtain an $\omega_0$-anomalous action of $Q$ on $B \rtimes^{\mathrm{r}}_{\pi,c} K$.
The remainder of this proof consists of demonstrating that this twisted crossed product is in fact isomorphic to the UHF algebra with supernatural number $|Q|^\infty$.

First, we observe that, since  $c|_K = 1$, the twisted crossed product $B \rtimes^{\mathrm{r}}_{\pi,c} K$ is in fact not twisted in this case. Moreover, as $K$ is finite, there is no distinction to be made between the algebraic, the reduced and the full crossed products. Therefore, we shall simplify our notation and write $B \rtimes_\pi K$ instead of $B \rtimes^{\mathrm{r}}_{\pi,c} K$. 

Next, we consider the restriction of $\lambda_G$ to $K$. By decomposing $G$ into right $K$-cosets, we see that $\lambda_G|_K$ is equivalent to $|G/K| = |Q|$ copies of the left regular representation $\lambda_K:K \rightarrow \B(\ell^2(K))$. Hence, $\lambda_G|_K$ is equivalent to $\lambda_K \otimes 1_{\ell^2(Q)}$ where $1_{\ell^2(Q)}$ denotes the trivial representation of $K$ on the Hilbert space $\ell^2(Q)$.

It follows that we have an equivariant isomorphism of $\rm{C}^*$-algebras
\begin{align}
    \bigotimes_{j\in\N} \B(\ell^2(G)) \cong \bigotimes_{j\in\N} (\B(\ell^2(K)) \otimes \B(\ell^2(Q)),
\end{align}
where $K$ acts by $\pi$ on the left hand side and by $\sigma := \Ad(\lambda_K \otimes 1_{\ell^2(Q)})^{\otimes \infty}$ on the right hand side. 

Taking crossed products, we have
\begin{align}
    B \rtimes_{\pi} K &\cong \left(\bigotimes_{j\in\N} \B(\ell^2(K)) \otimes \B(\ell^2(Q))\right) \rtimes_{\sigma} K \label{eqn:reduction-step1}\\
    &\cong \left(\bigotimes_{j\in\N} \B(\ell^2(K)) \rtimes_{\Ad(\lambda_K)^{\otimes \infty}} K\right) \otimes \B(\ell^2(Q))^{\otimes \infty}.\nonumber 
\end{align}
Since $\B(\ell^2(Q))^{\otimes \infty}$ is a UHF algebra with supernatural number $|Q|^\infty$, it suffices to show that $\bigotimes_{k\in\N} \B(\ell^2(K)) \rtimes_{\Ad(\lambda_K)^{\otimes \infty}} K$ is isomorphic to a UHF algebra with supernatural number $|K|^\infty$. 

The infinite tensor product of the left regular representation has the Rokhlin property (see for example \cite[Example 3.2]{IZU04}). Hence, by \cite[Corollary 4.5]{OSTE14} (or by \cite[Theorem 4.4]{KI77} for cyclic groups) the fixed point algebra $(\bigotimes_{k\in\N} \B(\ell^2(K))^K$ is isomorphic to the UHF algebra $\bigotimes_{k\in\N} \B(\ell^2(K))$. Moreover, the fixed point algebra $(\bigotimes_{k\in\N} \B(\ell^2(K)))^K$ is isomorphic to the crossed product $\bigotimes_{k\in\N} \B(\ell^2(K)) \rtimes_{\Ad(\lambda_K)^{\otimes \infty}} K$ by \cite[Theorem 2.16]{SZBA17}. This completes the proof.
\end{proof}
As a corollary of Theorem \ref{thm:UHF-converse}, we obtain anomalous actions on UHF-stable algebras.
\begin{corollary}\label{cor:UHF-stable}
    Let $G$ be a finite group and let $\omega \in Z^3(G, \T)$ be any 3-cocycle. Let $A$ be a unital $\rm{C}^*$-algebra that tensorially absorbs the UHF algebra $\bigotimes_{j \in \N} \M_{|G|}$. Then there exists an $\omega$-anomalous action of $G$ on $A$. 
\end{corollary}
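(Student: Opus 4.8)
The plan is to transport the $\omega$-anomalous action produced by Theorem~\ref{thm:UHF-converse} along the tensor-absorption isomorphism. Write $D = \bigotimes_{j \in \N} \M_{|G|}$ for the relevant UHF algebra. By Theorem~\ref{thm:UHF-converse} (in the form of Theorem~\ref{thm:UHF-converse-Q}), there is an $\omega$-anomalous action $(\theta, u)$ of $G$ on $D$, with $\theta_g \in \Aut(D)$ and $u_{g,h} \in U(D)$ satisfying conditions (1) and (2) of Definition~\ref{def:anomalous-action}. Since $D$ is AF, hence nuclear, the minimal and maximal C$^*$-tensor products agree, so there is no ambiguity in the algebra $A \otimes D$; fix a unital $^*$-isomorphism $\varphi \colon A \xrightarrow{\ \cong\ } A \otimes D$ witnessing the hypothesis that $A$ tensorially absorbs $\bigotimes_{j\in\N}\M_{|G|}$.

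First I would define the candidate anomalous action on $A$ by inflation along the first tensor factor. Set
\[
\theta'_g = \varphi^{-1} \circ (\id_A \otimes \theta_g) \circ \varphi \in \Aut(A), \qquad u'_{g,h} = \varphi^{-1}(1_A \otimes u_{g,h}) \in U(A).
\]
Since $A$ is unital we have $M(A) = A$, so these are indeed the objects required by Definition~\ref{def:anomalous-action}, and $u'_{g,h}$ is unitary because $\varphi$ is a $^*$-isomorphism. It then suffices to verify conditions (1) and (2). Conjugating everything by $\varphi$, both reduce to the corresponding identities for the pair $(\id_A \otimes \theta,\, 1_A \otimes u)$ on $A \otimes D$, and these follow at once from the fact that $\id_A \otimes (\cdot)$ is a unital $^*$-homomorphism in each variable: for condition (1),
\[
\id_A \otimes \theta_{gh} = \id_A \otimes \big(\Ad(u_{g,h})\theta_g\theta_h\big) = \Ad(1_A \otimes u_{g,h})\,(\id_A \otimes \theta_g)(\id_A \otimes \theta_h),
\]
while for condition (2), since $\omega(g,h,k) \in \T$ is a scalar,
\[
\omega(g,h,k)(1_A \otimes u_{gh,k})(1_A \otimes u_{g,h}) = 1_A \otimes \big(\omega(g,h,k) u_{gh,k} u_{g,h}\big) = 1_A \otimes \big(u_{g,hk}\theta_g(u_{h,k})\big) = (1_A \otimes u_{g,hk})(\id_A \otimes \theta_g)(1_A \otimes u_{h,k}).
\]

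There is essentially no serious obstacle here: the argument is a routine inflation of an anomalous action along a tensor factor, and the only points that need attention are bookkeeping ones — that $A$ is unital so no multiplier algebras intervene, that $D$ is nuclear so $A \otimes D$ is unambiguous, and that the scalar $3$-cocycle $\omega$ passes through $1_A \otimes (\cdot)$ untouched. If a cleaner formulation is desired, one can isolate the observation as a standalone lemma — an $\omega$-anomalous action of $G$ on $D$ induces an $\omega$-anomalous action of $G$ on $B \otimes D$ for every unital C$^*$-algebra $B$, via $\id_B \otimes (\cdot)$ — and then apply it with $B = A$ together with the isomorphism $A \cong A \otimes D$.
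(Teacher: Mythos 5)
Your proposal is correct and follows exactly the paper's argument: take the $\omega$-anomalous action on $D = \bigotimes_{j\in\N}\M_{|G|}$ from Theorem \ref{thm:UHF-converse}, inflate it to $A \otimes D$ via $\id_A \otimes (\cdot)$ and $1_A \otimes (\cdot)$, and transport it through the isomorphism $A \cong A \otimes D$. The additional verifications you spell out (nuclearity of $D$, unitality of $A$, the scalar cocycle passing through the tensor embedding) are all correct bookkeeping that the paper leaves implicit.
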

\begin{proof}
    By Theorem \ref{thm:UHF-converse}, there exists an $\omega$-anomalous action $(\theta, u)$ on $\bigotimes_{j \in \N} \M_{|G|}$.
    Set $\theta_g' = \id_A \otimes \theta_g$ and $u_{g,h}' = 1_A \otimes u_{g,h}$.
    Then $(\theta', u')$ is an $\omega$-anomalous action of $G$ on $A \otimes \bigotimes_{j \in \N} \M_{|G|} \cong A$.
\end{proof}

Examples of $\rm{C}^*$-algebras that absorb the UHF algebra $\bigotimes_{j \in \N} \M_{|G|}$ tensorially include UHF algebras whose supernatural number is divisible by $|G|^\infty$ as well as the Cuntz algebras $\mathcal{O}_{|G|}$ and $\mathcal{O}_2$. 

In the non-unital setting, an interesting example is the Jacelon–-Razak algebra $\mathcal{W}$ (see \cite{Jac13}). The proof of Corollary \ref{cor:UHF-stable} is easily adapted to the non-unital setting with unitaries $u_{g,h}'$ now living in the multiplier algebra $M(\mathcal{W})$. In general, the unitaries can not be taken cannot live in the minimal unitisation $\mathcal{W}^\sim$, else we could apply Proposition \ref{HomtoAb} with $q:U(A^\sim)\rightarrow\T$ the restriction of the quotient map $A^\sim \twoheadrightarrow A^\sim / A \cong \C$ to obtain that $[\omega] = 0$.

Theorem \ref{thm:UHF-converse} provides us with a partial converse to Theorem \ref{thm:UHF}. By Theorem \ref{thm:UHF}, a finite group $G$ has an $\omega$-anomalous action on a UHF algebra with supernatural number $\mathfrak{n}$ only if the order $r_\omega$ of $[\omega] \in H^3(G, \T)$ satisfies $r_\omega^\infty | \mathfrak{n}$. Hence, all $\omega$-anomalous actions exist only if the exponent $e$ of $H^3(G, \T)$ satisfies $e^\infty | \mathfrak{n}$. Since $|G|$ annihilates $H^3(G, \T)$, the exponent $e$ of is a factor of $|G|$ in general. Note that when $G$ is a cyclic group $|G|^\infty = e^\infty$.


\subsection{Free anomalous actions}

Following the terminology of \cite{OC06} for group actions on von Neumann algebras, we will call an $\omega$-anomalous action $(\theta,u)$ \emph{free} if $\theta_g \not\in \Inn(G)$ for $g \neq 1_G$. Actions with this property have also been called $\emph{outer}$ actions in the literature (e.g.\ \cite{JON80}).

A non-free $\omega$-anomalous action $(\theta,u)$ of $G$ will descend to an $\omega'$-anomalous action of the quotient of $G/K$ where $K = \theta^{-1}(\Inn(A))$. Moreover, $[\omega]$ will be the pullback of $[\omega']$ under the quotient map. This observation can be used to prove that freeness is automatic is certain situations, for example if $[\omega]$ has order $|G|$.

The following tensor product trick can be used to obtain free anomalous actions on $\Z$-stable C$^*$-algebras.
\begin{proposition}
    Let $(\theta, u)$ be an $\omega$-anomalous action of a countable discrete group $G$ on a C$^*$-algebra $A$. Let $\alpha:G \curvearrowright \bigotimes_{g \in G} \Z$ be the action given by permuting the tensor factors. Set $\theta_g' = \theta_g \otimes \alpha_g$ and $u_{g,h}' = u_{g,h} \otimes 1_\Z$. Then $(\theta', u')$ is a free $\omega$-anomalous action of $G$ on $A \otimes (\bigotimes_{g \in G} \Z)$.
\end{proposition}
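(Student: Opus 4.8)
The plan is to verify the two defining conditions of an $\omega$-anomalous action (Definition~\ref{def:anomalous-action}) by a direct computation, and then to establish freeness as a separate, genuinely new point. The two conditions are essentially formal: since $\alpha$ is a bona fide action one has $\alpha_{gh}=\alpha_g\alpha_h$ with no cocycle, $\alpha_g(1)=1$, and each $\omega(g,h,k)$ is a scalar in $\T$, hence central. Applying $\Ad(u_{g,h})\otimes\id$ to $\theta_{gh}\otimes\alpha_{gh}=(\theta_g\theta_h)\otimes(\alpha_g\alpha_h)$ and using $\theta_{gh}=\Ad(u_{g,h})\theta_g\theta_h$ gives the first condition for $(\theta',u')$; tensoring the identity $\omega(g,h,k)u_{gh,k}u_{g,h}=u_{g,hk}\theta_g(u_{h,k})$ with $1$, and noting $\theta'_g(u_{h,k}\otimes 1)=\theta_g(u_{h,k})\otimes 1$ and that scalars factor through the tensor product, gives the second. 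I expect this to be routine.

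The substance is freeness: I must show $\theta'_g=\theta_g\otimes\alpha_g$ is an outer automorphism of $A\otimes\bigl(\bigotimes_{h\in G}\Z\bigr)$ for every $g\neq 1_G$. The key device is to enlarge the index set so that it becomes infinite irrespective of $|G|$. Since $\Z$ is strongly self-absorbing, $\Z\cong\bigotimes_{n\in\N}\Z$, and fixing one such isomorphism and applying it in each tensor factor produces a $G$-equivariant isomorphism $\bigotimes_{h\in G}\Z\cong\bigotimes_{(h,n)\in G\times\N}\Z$, the $G$-action being by left translation in the $G$-coordinate on both sides. Tensoring with $A$, we may therefore assume $\theta'_g$ acts on $A\otimes\bigl(\bigotimes_{i\in I}\Z\bigr)$ with $I=G\times\N$ infinite, as $\theta_g$ on the $A$-factor and by a fixed-point-free permutation $\pi_g$ of $I$ on the $\Z$-factors (fixed-point-free because left translation by $g\neq 1_G$ has no fixed point on $G$).

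Now suppose for contradiction that $\theta'_g=\Ad(w)$ for a unitary $w$ (in the algebra if $A$ is unital, in its multiplier algebra otherwise). Given $\varepsilon>0$, choose a finite $F\subseteq I$ and an element $w_0$ supported on the $A$-factor together with the $\Z$-factors indexed by $F$, with $\|w-w_0\|<\varepsilon$. As $F\cup\pi_g(F)$ is finite and $I$ is infinite, pick $i_0\in I\setminus(F\cup\pi_g(F))$, so that $i_0\notin F$ and $\pi_g^{-1}(i_0)\notin F$, and pick a non-scalar unitary $z$ in the $\Z$-factor at $i_0$. Then $w_0$ commutes with $z$ (disjoint tensor factors), so $\|wz-zw\|<2\varepsilon$; on the other hand $wzw^*=\theta'_g(z)$ is $z$ sitting in the factor at $\pi_g(i_0)\neq i_0$, and for a non-scalar $z$ there are states $\phi_1,\phi_2$ on $\Z$ with $\phi_1(z)\neq\phi_2(z)$, whence $\|wzw^*-z\|\geq|\phi_1(z)-\phi_2(z)|>0$. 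Since $\|wzw^*-z\|=\|wz-zw\|<2\varepsilon$, this is impossible once $\varepsilon<\tfrac{1}{2}|\phi_1(z)-\phi_2(z)|$. Hence $\theta'_g$ is outer for all $g\neq 1_G$, and $(\theta',u')$ is free.

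The main obstacle is precisely this outerness statement: when $G$ is finite, $\bigotimes_{h\in G}\Z$ is only a finite tensor power of $\Z$, so it is not obvious that a permutation of finitely many tensor factors is outer, and the passage to the infinite index set $G\times\N$ via $\Z\cong\bigotimes_{n\in\N}\Z$ is exactly what makes the standard local-perturbation argument available uniformly in $G$. A minor additional point is the bookkeeping with multiplier algebras in the non-unital case, which is handled by the usual approximate-unit arguments.
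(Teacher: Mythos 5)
Your proof is correct, and the verification of conditions (\ref{cond:1}) and (\ref{cond:2}) of Definition \ref{def:anomalous-action} is exactly what the paper does (the paper simply declares it immediate). Where you genuinely diverge is in the freeness argument. The paper's proof takes a central sequence $(z_n)$ of self-adjoint elements with spectrum $[-1,1]$ in the tensor factor of $\Z$ indexed by $1_G$ and observes that $\|\theta_g'(x_n)-x_n\|=\|z_n\otimes 1_\Z - 1_\Z\otimes z_n\|=2$ for all $n$ and all $g\neq 1_G$, while inner automorphisms move a central sequence by $o(1)$; no re-indexing of the tensor product is needed, since the centrality of $(x_n)$ in the whole algebra does all the work. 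You instead re-index $\bigotimes_{h\in G}\Z$ as $\bigotimes_{(h,n)\in G\times\N}\Z$ using $\Z\cong\bigotimes_{n\in\N}\Z$, so that $\theta_g'$ permutes infinitely many tensor factors without fixed points, and then run the classical local-approximation argument against a hypothetical implementing unitary. Both arguments ultimately rest on the same structural fact $\Z\cong\Z^{\otimes\infty}$ (the paper needs it to produce the central sequence $(z_n)$, you need it to make the index set infinite), but the paper's version is shorter and in fact proves the stronger statement that $\theta_g'$ is not even asymptotically inner along this central sequence, whereas yours is more self-contained, using only density of the algebraic tensor product and product states. One cosmetic quantifier issue in your write-up: you fix $\varepsilon$ before choosing $z$, but the required bound is $\varepsilon<\tfrac12|\phi_1(z)-\phi_2(z)|$; this is harmless because all factors are copies of the same algebra $\Z$, so you may fix a non-scalar unitary $z_0\in\Z$ and states $\phi_1,\phi_2$ once and for all, set $\delta=|\phi_1(z_0)-\phi_2(z_0)|>0$, and take $\varepsilon<\delta/2$ at the outset, using the copy of $z_0$ in whichever factor $i_0$ you end up selecting.
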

\begin{proof}
Since $\alpha$ is an action, it follows immediately from Definition \ref{def:anomalous-action} that $(\theta', u')$ is an $\omega$-anomalous action. It remains to show that $(\theta', u')$ is free.

Let $(z_n)_{n=1}^\infty$ be a central sequence in $\Z$ where each $z_n$ is a self-adjoint element with spectrum $[-1,1]$. Set $x_n = 1_A \otimes z_n \otimes (\bigotimes_{g \neq 1_G} 1_\Z)$. Then 
\begin{align}
    \|\theta_g'(x_n) - x_n\| &= \|z_n \otimes 1_\Z - 1_\Z \otimes z_n\|_{\Z \otimes \Z} \\
    &= \sup_{s,t \in [-1,1]}|s - t| \nonumber\\
    &= 2 \nonumber
\end{align}
for all $n \in \N$ and $g \neq 1_G$. However, $\lim_{n\to \infty} \|\phi(x_n) - x_n\| = 0$ for all inner automorphisms $\phi$, since $z_n$ is a central sequence in $\Z$.
\end{proof}

\subsection{Examples with non-trivial \texorpdfstring{$K_1$}{K1} groups} \label{sec:nonzero-K1}

In this section, we construct anomalous actions of finite cyclic groups on the irrational rotation algebras and the Bunce--Deddens algebras. 

These examples could be constructed with the general machinery of Section \ref{sec:twisted-cross-products}. However, we will take a more concrete approach. Indeed, since we are working with a cyclic group $\mathbb{Z}_n$ it suffices to construct an automorphism $\phi \in \Aut(A)$ and a unitary $u \in U(A)$ such that $\phi^n = \Ad(u)$ and $\phi(u) = \gamma u$ for some $n$-th root of unit $\gamma \in \C$. The same arguments made in Section \ref{sec:actions-on-R}, where we discussed Connes' automorphisms of $\R$, apply in this case. 

\begin{proposition}[{cf. \cite[Corollary 3.6]{JO20}}]\label{prop:A-theta}
    Let $\theta \in \mathbb{R}\setminus\mathbb{Q}$. Let $A_\theta$ be the irrational rotation algebra. For any $n \in \mathbb{N}$ and any $n$-th root of unity $\gamma \in \C$, there exists $\phi \in \Aut(A_\theta)$ and a unitary $u \in U(A)$ such that
\begin{align}
      \phi^n &= \Ad(u),\label{eqn:A-theta-1}\\
      \phi(u) &= \gamma u. \label{eqn:A-theta-2}   
\end{align}
\end{proposition}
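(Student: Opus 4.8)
The plan is to realise $A_\theta$ as a crossed product and exhibit the required automorphism and unitary very explicitly, mimicking the structure of Connes' automorphism of $\R$ from Theorem \ref{thm:Connes}. Recall that $A_\theta$ is generated by unitaries $U,V$ with $VU = e^{2\pi i\theta}UV$, and that it can be written as the crossed product $C(\T) \rtimes_\alpha \z$, where $U$ corresponds to the generating unitary of $C(\T)$ (the coordinate function $z$) and $V$ is the canonical implementing unitary, with $\alpha$ rotation by $\theta$. First I would set up a second copy of this picture adapted to the cyclic group $\z_n$: since $\gamma$ is an $n$-th root of unity, I want an automorphism $\phi$ which ``rotates the $V$-direction by $\gamma$'' and rotates the $U$-direction appropriately.

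Concretely, I would try $\phi(U) = U$ and $\phi(V) = \gamma^{?} U^{?} V$, or better, work with the finite-dimensional shift trick: inside $A_\theta$ one can find, for each $n$, a unital copy of $\M_n$ (this is essentially the content of the fact that $A_\theta$ absorbs the UHF algebra of type $n^\infty$ when $\theta$ has suitable continued-fraction behaviour — but for a general irrational $\theta$ one instead uses the explicit $n\times n$ ``clock and shift'' unitaries built from $U^{1/\cdot}$ and $V$). The cleaner route, and the one I expect to use, is to take $u$ to be a unitary implementing the outer automorphism of order $n$: set $\phi = \Ad(W)\circ\beta$ where $\beta$ is a simple ``coordinate'' automorphism of $A_\theta$ of order exactly $n$ in $\Out(A_\theta)$ and $W$ is chosen so that the cocycle identity \eqref{eqn:A-theta-1} holds with $u$ a scalar-twisted version of the implementing unitary, and then verify \eqref{eqn:A-theta-2} by a direct computation $\phi(u) = \gamma u$. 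The key algebraic input is the same as in Connes' and Jones' arguments: one picks $u = \sum_{j=1}^n \gamma^j e_{jj}$ and $v = e_{n1}\phi_0(u) + \sum_{j=1}^{n-1} e_{j,j+1}$ for an appropriate system of matrix units $e_{ij}$ available inside $A_\theta$, giving $\phi = \Ad(v)\circ(\text{shift})$ with $\phi^n = \Ad(u)$ and $\phi(u) = \gamma u$ automatically.

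The main obstacle is producing the system of matrix units (equivalently, the unital embedding $\M_n \hookrightarrow A_\theta$ together with a compatible shift endomorphism) inside $A_\theta$ for an \emph{arbitrary} irrational $\theta$, since $A_\theta$ need not be UHF-like and Connes' construction used the genuine infinite tensor product structure of $\R$. I expect to resolve this either by (i) invoking the known fact that $A_\theta \cong A_\theta \otimes A_{\theta'}$-type absorption results, or more robustly (ii) passing through the crossed-product picture: write $A_\theta = C(\T)\rtimes_\alpha\z$, let $\widehat{\alpha}$ be a generator of the dual $\widehat{\z} = \T$-action scaled to have order $n$, so $\widehat{\alpha}^n = \id$ and $\widehat{\alpha}$ is outer; then the Takai-duality unitary (or a direct computation on the generators $\widehat{\alpha}(V) = \zeta V$ for $\zeta$ a primitive $n$-th root of unity, $\widehat{\alpha}(U) = U$) shows $\widehat{\alpha}^n = \id = \Ad(1)$, which is too trivial — so instead I would compose with an inner perturbation to land on a genuinely order-$n$-in-$\Out$ automorphism whose $n$-th power is $\Ad(u)$ for a unitary $u$ on which $\phi$ acts by $\gamma$. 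The bookkeeping to get $\phi(u) = \gamma u$ for the \emph{prescribed} root of unity $\gamma$ (not just some root) is the delicate point, and it is handled exactly as in the matrix-unit computation above once the finite-dimensional substructure is in place; I would cite \cite[Corollary 3.6]{JO20} as the template and carry out the $A_\theta$-specific verification in detail.
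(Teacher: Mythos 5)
There is a genuine gap: you never actually produce $\phi$ and $u$, and the route you commit to most concretely --- finding a system of matrix units, i.e.\ a unital copy of $\M_n$ inside $A_\theta$, and running Connes' clock-and-shift computation --- cannot work. Since $A_\theta$ has a unique trace $\tau$ with $\tau_*(K_0(A_\theta)) = \z + \theta\z$, a unital embedding $\M_n \hookrightarrow A_\theta$ would require a projection of trace $1/n$, and $1/n \in \z+\theta\z$ forces $\theta$ rational for $n \geq 2$. So no such matrix units exist for any $n \geq 2$; likewise $A_\theta$ never absorbs the UHF algebra of type $n^\infty$ (that would alter $K_0$). The "delicate bookkeeping" you defer to this finite-dimensional substructure therefore has nothing to stand on, and the alternative you sketch (an unspecified inner perturbation of the dual action) is left entirely open.

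Ironically, you walk right past the correct argument when you dismiss the gauge action as "too trivial". The paper's proof is three lines: realise $A_\theta$ as the universal C$^*$-algebra on unitaries $u,v$ with $uv = e^{2\pi i\theta}vu$, and use universality to define $\phi \in \Aut(A_\theta)$ by $\phi(u) = \gamma u$ and $\phi(v) = e^{2\pi i\theta/n}v$. The point is to scale $v$ not by an $n$-th root of unity (which would give $\phi^n = \id$, your "too trivial" observation) but by an $n$-th root of $e^{2\pi i\theta}$: then $\phi^n(v) = e^{2\pi i\theta}v = uvu^*$ and $\phi^n(u) = \gamma^n u = u$, so $\phi^n = \Ad(u)$ on generators, while $\phi(u) = \gamma u$ holds by construction. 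The unitary $u$ in the statement is simply one of the two canonical generators --- no crossed-product picture, Takai duality, or embedded finite-dimensional algebra is needed.
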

\begin{proof}
    We view $A_\theta$ as the universal C$^*$-algebra generated by two unitaries $u, v \in A_\theta$ such that $uv = e^{2\pi i \theta}vu$.
    
    By the universal property of $A_\theta$, we may define $\phi \in \Aut(A_\theta)$ by setting $\phi(u) = \gamma u$ and $\phi(v) = e^{2 \pi i \theta/n}v$. By construction \eqref{eqn:A-theta-2} holds. We compute that $\phi^n(u) = u$ and $\phi^n(v) = e^{2\pi i \theta}v = uvu^*$. Since $u$ and $v$ generate $A_\theta$, we have \eqref{eqn:A-theta-1}. 
\end{proof}

The irrational rotation algebra $A_\theta$ has a unique trace $\tau$. Moreover, $K_0(A_\theta) \cong \mathbb{Z} + \mathbb{Z}\theta \subseteq \mathbb{R}$ with the isomorphism induced by the trace $\tau$ (see \cite[Example VIII.5.1]{Dav96}). It follows from Proposition \ref{prop:A-theta} that there are $\omega$-anomalous actions of $\mathbb{Z}_n$ on $A_\theta$ for all $n \in \N$ and all $[\omega] \in H^3(\mathbb{Z}_n, \T)$. This does not contradict Proposition \ref{prop:order} since $K_1(A_\theta) \cong \mathbb{Z} \oplus \mathbb{Z}$.

We now turn to the Bunce--Deddens algebras (see \cite[Section V.3]{Dav96}). These algebras arise as the crossed product of a Cantor space by an odometer action. Like the UHF algebras, Bunce--Deddens algebras are classified up to isomorphism by a supernatural number $\mathfrak{n} = \prod_{k \in \N} n_k$. In the odometer construction $n_k$ is the number of values on the $k$-th dial of the odometer (see \cite[Theorem VIII.4.1]{Dav96}). The Bunce--Deddens algebra $B_\mathfrak{n}$ has a unique trace $\tau$, $K_0(B_\mathfrak{n}) = Q(\mathfrak{n})$ and $K_1(B_\mathfrak{n}) = \mathbb{Z}$.

\begin{proposition}\label{prop:BunceDeddens}
    Let $\mathfrak{n} = \prod_{k \in \N} n_k$ be a supernatural number. Let $B_\mathfrak{n}$ be the corresponding Bunce--Deddens algebra. For any $m \in \mathbb{N}$, which is coprime to $\mathfrak{n}$ and any $m$-th root of unity $\gamma \in \C$, there exists $\phi \in \Aut(B_\mathfrak{n})$ and a unitary $u \in U(A)$ such that
\begin{align}
      \phi^m &= \Ad(u),\label{eqn:BD-1}\\
      \phi(u) &= \gamma u. \label{eqn:BD-2}   
\end{align}
\end{proposition}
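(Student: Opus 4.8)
The plan is to realise $\phi$ as an \emph{$m$-th root of the odometer}, in direct analogy with the rotation-by-$\theta/n$ automorphism used for $A_\theta$ in Proposition~\ref{prop:A-theta}. Realise $B_\mathfrak{n}$ via the odometer picture recalled above: $B_\mathfrak{n}\cong C(X)\rtimes_T\z$, where $X=\varprojlim_k \z/N_k\z$ (with $N_k=n_1\cdots n_k$) is the compact abelian group of $\mathfrak{n}$-adic integers, $\z$ sits densely inside $X$, and $T\colon X\to X$ is the odometer, i.e.\ translation by $1$. Write $v\in U(B_\mathfrak{n})$ for the canonical unitary implementing $T$, so that $\alpha_T(f):=vfv^*=f\circ T^{-1}$ for $f\in C(X)$; the unitary we produce will simply be $u:=v$.

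The coprimality hypothesis $\gcd(m,\mathfrak{n})=1$ is exactly what guarantees that $m$ is invertible in the ring $\z_\mathfrak{n}$. Put $a:=m^{-1}\in X$ and let $h\colon X\to X$ be translation by $a$. Then $h$ is a homeomorphism, $h^m=T$, and $h$ commutes with $T$ since $X$ is abelian. Hence the automorphism $\psi_0\in\Aut(C(X))$, $\psi_0(f)=f\circ h^{-1}$, commutes with $\alpha_T$. The plan is then to define $\phi$ on the algebraic crossed product $C(X)\rtimes^{\mathrm{alg}}_T\z$ by $\phi|_{C(X)}=\psi_0$ and $\phi(v)=\gamma v$. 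The covariance relation $\phi(v)\phi(f)\phi(v)^*=\phi(\alpha_T(f))$ that has to be checked unwinds, using that $\gamma$ is a scalar and that $\psi_0$ commutes with $\alpha_T$, to precisely the identity $h\circ T=T\circ h$; so by the universal property of the crossed product (using amenability of $\z$ to identify the full and reduced crossed products) $\phi$ extends to a $*$-endomorphism of $B_\mathfrak{n}$. Carrying out the same recipe with $h^{-1}$ in place of $h$ and $\bar\gamma$ in place of $\gamma$ yields a two-sided inverse, so $\phi\in\Aut(B_\mathfrak{n})$.

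It then remains to verify \eqref{eqn:BD-1} and \eqref{eqn:BD-2} on the generating set $C(X)\cup\{v\}$. On $C(X)$ one has $\phi^m|_{C(X)}=\psi_0^m=(f\mapsto f\circ h^{-m})=\alpha_T=\Ad(v)|_{C(X)}$, while $\phi^m(v)=\gamma^m v=v=\Ad(v)(v)$ because $\gamma^m=1$; hence $\phi^m=\Ad(v)=\Ad(u)$, which is \eqref{eqn:BD-1}, and \eqref{eqn:BD-2} is immediate from $\phi(u)=\phi(v)=\gamma v=\gamma u$. (For $m=1$ this degenerates harmlessly to $\phi=\Ad(v)$, $u=v$.) The only points that are not pure formality are the crossed-product description $B_\mathfrak{n}\cong C(X)\rtimes_T\z$ with the correct supernatural number, which is classical and recalled above, and the check that the generators-and-relations prescription genuinely defines an automorphism of the C$^*$-algebra, which is the routine universal-property argument for crossed products. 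The one real idea, and the place where coprimality is essential, is the observation that $\gcd(m,\mathfrak{n})=1$ lets one take an honest $m$-th root of the odometer inside $\mathrm{Homeo}(X)$, namely translation by $m^{-1}\in\z_\mathfrak{n}$.
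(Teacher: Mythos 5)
Your proposal is correct and follows essentially the same route as the paper: identify $X$ with the $\mathfrak{n}$-adic integers, use coprimality to invert $m$ in that ring, take translation by $m^{-1}$ as an $m$-th root of the odometer, and define $\phi$ on the crossed product by the universal property with $\phi(v)=\gamma v$. The extra details you supply (the covariance check and the explicit inverse) are routine verifications the paper leaves implicit.
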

\begin{proof}
    Let $X=\prod_{k \in \N} X_k$ where $X_k$ is a discrete topological space with $n_k$ points. Let $\alpha \in \operatorname{Homeo}(X)$ be the odometer map and $\alpha^* \in \Aut(C(X))$ be the induced automorphism. Then $B_\mathfrak{n} \cong C(X) \rtimes_{\alpha^*} \mathbb{Z}$.
    
    The key to the construction is the observation that $\alpha$ has an $m$-th root whenever $m$ is coprime to $\mathfrak{n}$. In the case where $\mathfrak{n}=p^\infty$, this is just the observation that $m$ is invertible in the ring of $p$-adic integers. In general, we work in the topological ring $R$ that arises as the inverse limit of the system
    \begin{equation}
        \cdots \rightarrow \frac{\mathbb{Z}}{n_4n_3n_2n_1\mathbb{Z}} \rightarrow \frac{\mathbb{Z}}{n_3n_2n_1\mathbb{Z}} \rightarrow \frac{\mathbb{Z}}{n_2n_1\mathbb{Z}}\rightarrow \frac{\mathbb{Z}}{n_1\mathbb{Z}}.
    \end{equation}
    We identify $X$ with the underlying topological space of $R$ and $\alpha$ with addition by $1_R$. Since the image of $m\cdot1_R$ is invertible at each stage of the system, it is invertible in $R$. Let $\beta \in \operatorname{Homeo}(X)$ be given by addition by $(m \cdot 1_R)^{-1}$.
    
    Let $u \in B_\mathfrak{n}$ be the canonical unitary of the crossed product. By the universal property of the crossed product, we may define $\phi \in \Aut(B)$ by $\phi(u) = \gamma u$ and $\phi(f) = f \circ \beta$ for all $f \in C(X)$. Note that $\alpha$ and $\beta$ commute since $\alpha = \beta^m$. By construction \eqref{eqn:BD-2} holds, and \eqref{eqn:BD-1} follows since $\phi^m(u) = u = \Ad(u)(u)$ and $\phi^m(f) = \Ad(u)(f)$ for all $f \in C(X)$.
\end{proof}

\section{\texorpdfstring{$\rm{C}^*$}{C*}-Tensor categories}\label{sec:UTC}

In this section, we show how to translate our results on anomalous actions to the language of $\rm{C}^*$-tensor categories and $\rm{C}^*$-tensor functors. We begin with an introduction to the basic terminology and the key examples.

\subsection{\texorpdfstring{$\rm{C}^*$}{C*}-Tensor categories}

We assume the reader is familiar with the basic language of category theory (see for example \cite{MAC13}). All categories that we shall consider will be \emph{$\C$-linear categories}, meaning that the space of morphisms $\Hom(X,Y)$ between any two objects of the category is endowed with a $\C$-vector space structure and composition of morphisms is bilinear.

\begin{definition}{(\cite{GLR85}; see also \cite[Section 2]{JP16})}
    A \emph{$\rm{C}^*$-category} is a $\C$-linear category $\mathcal{C}$ equipped with a conjugate linear maps $^*\!:\Hom(X,Y) \rightarrow \Hom(Y,X)$ for every $X,Y \in \mathcal{C}$ such that
    \begin{enumerate}
        \item $\phi^{**} = \phi$ for all $\phi \in \Hom(X,Y);$ 
        \item $(\phi \circ \psi)^* = \psi^* \circ \phi^*$ for all $\psi \in \Hom(X,Y), \phi \in \Hom(Y,Z)$;
        \item The function $\|\cdot\|:\Hom(X,Y) \rightarrow [0,\infty]$ given by
        \begin{equation*}
            \|\phi\|^2 = \sup \{\lambda > 0: \phi^* \circ \phi - \lambda\id_X \mbox{ is not invertible}\}
        \end{equation*}
        is a complete norm on $\Hom(X,Y)$;
    
    \item $\|\phi \circ \psi\| \leq \|\phi\|\|\psi\|$ for all $\psi \in \Hom(X,Y), \phi \in \Hom(Y,Z)$;
    \item $\|\phi^* \circ \phi\| = \|\phi\|^2$ for all $\phi \in \Hom(X,Y)$;
    \item For all $\phi \in \Hom(X,Y)$, $\phi^* \circ \phi$ is a positive element of the $\rm{C}^*$-algebra $\Hom(X,X)$.
    \end{enumerate}
A \emph{$\rm{C}^*$-functor} $F:\mathcal{C} \rightarrow \mathcal{D}$ between $\rm{C}^*$-categories is required to be $\C$-linear on morphisms and $^*$-preserving. A natural isomorphism $\nu:F \rightarrow G$ is said to be unitary if $\nu_x \in \Hom(F(X), G(X))$ satisfies $\nu_X^* \circ \nu_X = 1_{F(X)}$ and $\nu_X \circ \nu_X^* = 1_{G(X)}$ for all $X \in \mathcal{C}$.
\end{definition}

A simple example of a $\rm{C}^*$-category is $\Hilb_\C$, whose objects are finite-dimensional Hilbert spaces over $\C$ and morphism are linear maps. The map $\phi \mapsto \phi^*$ is the Hilbert space adjoint. More generally, for any $\rm{C}^*$-algebra $A$, the (right) Hilbert $A$-modules and adjointable maps form a $\rm{C}^*$-category (see for example \cite{Lance95}).  

We now consider tensor product structures on a $\rm{C}^*$ category.
\begin{definition}{(see for example \cite{EGNO, JP16})}
    A \emph{$\rm{C}^*$-tensor category} is a $\rm{C}^*$-category $\mathcal{C}$ together with a $\C$-linear bifunctor $-\otimes-:\mathcal{C}\times\mathcal{C} \rightarrow \mathcal{C}$, a distinguished object $1_\mathcal{C} \in \C$ and unitary natural isomorphisms   
        \begin{align}
            \alpha_{X,Y,Z}&: (X \otimes Y) \otimes Z \rightarrow X \otimes (Y \otimes Z),\\
            \lambda_X&: (1_\mathcal{C} \otimes X) \rightarrow X,\nonumber\\
            \rho_X&: ( X \otimes 1_\mathcal{C}) \rightarrow X,\nonumber
        \end{align} 
such that $(\phi \otimes \psi)^* = (\phi^* \otimes \psi^*)$ and the following diagrams commute for any $X,Y,Z,W\in \mathcal{C}$
    
\begin{equation}
\begin{tikzcd}[column sep=0.3em]
 {}
&((W\otimes X)\otimes Y)\otimes Z \arrow{dl}{\alpha_{W,X,Y}\otimes \operatorname{id_Z}} \arrow{drr}{\alpha_{W\otimes X,Y,Z}}
& {}  \\
(W\otimes(X\otimes Y))\otimes Z \arrow{d}{\alpha_{W,X\otimes Y,Z}}
& & & (W\otimes X)\otimes (Y\otimes Z) \arrow{d}{\alpha_{W,X,Y\otimes Z}} \\
W\otimes ((X\otimes Y)\otimes Z) \arrow{rrr}{\operatorname{id_W}\otimes \alpha_{X,Y,Z}}
& & & W\otimes (X\otimes(Y\otimes Z)),
\end{tikzcd}
\end{equation}
\vspace{-1em}

\begin{equation}
\begin{tikzcd}
(X\otimes 1_{\mathcal{C}})\otimes\arrow{rr}{\alpha_{X,1,Y}}\arrow{dr}[swap]{\rho_X\otimes \id_Y} Y& {} & X\otimes (1_{\mathcal{C}}\otimes Y)\arrow{dl}{\id_X\otimes\lambda_Y}\\
{}&X\otimes Y.&{}
\end{tikzcd}
\end{equation}
\end{definition}

The $\rm{C}^*$-category $\Hilb_\C$ can be endowed with the additional structure of a $\rm{C}^*$-tensor category by taking $-\otimes-$ to be the Hilbert space tensor product, $1_{\Hilb_\C}$ to be the 1-dimensional Hilbert space $\C$, and taking
\begin{align}
    \alpha_{X,Y,Z}&:(x \otimes y) \otimes z \mapsto x \otimes (y \otimes z),\\
    \lambda_X&:1_{\C} \otimes x \mapsto x,\nonumber\\
    \rho_X&:x \otimes 1_{\C}, \mapsto x\nonumber
\end{align}
for $X,Y,Z$ Hilbert spaces and $x,y,z$ in $X,Y,Z$ respectively. We now state the additional examples of $\rm{C}^*$-tensor categories that we will be using in this paper.
\begin{example}
In the following examples, $G$ is a discrete group,  $\omega \in Z^3(G, \T)$ is a 3-cocyle, and $A$ is a unital $\rm{C}^*$-algebra.
\begin{enumerate}
    \item $\Hilb_\C(G)$: The objects are finite-dimensional, $G$-graded Hilbert spaces, i.e.\ finite-dimensional Hilbert spaces $X$ endowed with a decomposition $X = \bigoplus_{g \in G} X_g$. The morphisms are linear maps that preserve the $G$-grading. The tensor product is the usual Hilbert space tensor product with the $G$-grading defined by $(X \otimes Y)_g = \bigoplus_{h \in G} X_{h} \otimes Y_{h^{-1}g}$. The remaining structure is the same as for $\Hilb_\C$.  
    
    \item $\Hilb_\C(G, \omega)$: Defined exactly the same as $\Hilb_\C(G)$ except that the associators are now given by 
    \begin{align*}
        \alpha_{X,Y,Z}: (x \otimes y) \otimes z \mapsto \omega(g,h,k) \, x \otimes (y \otimes z)
    \end{align*}
    for $x \in X_g$, $y \in Y_h$, $z \in Z_k$.
    
    \item $\Bim(A)$: The objects are (right) Hilbert $A$-modules $X$ endowed with a unital $^*$-homomorphism $A \rightarrow \mathcal{L}(X)$. The morphisms are the adjointable $A$-$A$ bilinear maps. The tensor product is the internal tensor product. (See for example \cite[Chapter 1]{EKQR06}.)
\end{enumerate}
\end{example}

Finally, we recall the notion of a $\rm{C}^*$-tensor functor.
\begin{definition}
    A $\rm{C}^*$-tensor functor $(F,J):\mathcal{C} \rightarrow \mathcal{D}$ is a $\rm{C}^*$-functor $F:\mathcal{C} \rightarrow \mathcal{D}$ such that $F(1_\mathcal{C}) \cong 1_\mathcal{D}$ together with unitary natural isomorphisms $J_{X,Y}:F(X) \otimes F(Y) \rightarrow F(X \otimes Y)$ such that the following diagram commutes
\begin{equation}\label{eqn:monodial-axiom}
\begin{tikzcd}[column sep=5em]
(F(X)\otimes F(Y))\otimes F(Z)\arrow[r,"\alpha_{F(X),F(Y),F(Z)}"]\arrow[d,"J_{X,Y}\otimes\id_{F(Z)}"]  & F(X)\otimes (F(Y) \otimes F(Z)) \arrow[d,"\id_{F(X)}\otimes J_{Y,Z}"] \\
F(X \otimes Y)\otimes F(Z) \arrow[d,"J_{X \otimes Y,Z}"] & F(X)\otimes F(Y \otimes Z) \arrow[d,"J_{X,Y \otimes Z}"]\\
F((X \otimes Y) \otimes Z )\arrow[r,"F(\alpha_{X,Y,Z})"] & F(X \otimes (Y \otimes Z)).
\end{tikzcd}
\end{equation}
\end{definition}

We are particularly interested in \emph{fully faithful} C$^*$-tensor functors $F:\mathcal{C}\rightarrow\mathcal{D}$, i.e.\ functors for which the the induced map $\Hom(X,Y) \rightarrow \Hom(F(X),F(Y))$ is an isomorphism for all $X, Y \in \mathcal{C}$.

\subsection{Anomalous actions and \texorpdfstring{$\rm{C}^*$}{C*}-tensor functors}

We now consider the relationship between anomalous actions of a group $G$ on a $\rm{C}^*$-algebra $A$ and $\rm{C}^*$-tensor functors $\Hilb_\C(G,\omega) \rightarrow \Bim(A)$. We begin by recalling an important class of bimodules.

\begin{example}{(cf. \cite[Section 3]{BRGRRI77})}\label{ex:bimodules}
Let $A$ be a unital $\rm{C}^*$-algebra and $\theta \in \Aut(A)$. Let $A_\theta$ be the (right) Hilbert $A$-$A$-bimodule with underlying vector space $A$, where the $A$-actions and (right) $A$-inner product are given by
\begin{align}
    a \cdot x \cdot b &= ax\theta(b),\\
    \langle x,y \rangle &= \theta^{-1}(x^*y).
\end{align}

Suppose $\theta, \phi \in \Aut(A)$. Any morphism $f \in \Hom(A_\theta, A_\phi)$ must be given by right multiplication by $f(1)$, and $f(1)$ must intertwine the right $A$-actions. It follows that $A_\theta$ and $A_\phi$ are unitary isomorphic if and only if there is a unitary $u \in A$ with $\theta = \Ad(u)\phi$.
Moreover, $\Hom(A_\theta, A_\theta) = Z(A)$, so $A_\theta$ is a simple bimodule when $A$ has a trivial centre. We also have $A_\theta \otimes A_\phi \cong A_{\theta\circ\phi}$ via the unitary isomorphism $J(x \otimes y) = x\theta(y)$.
\end{example}

We now record the construction of a tensor functor from an anomalous action. In the following proposition, we write $\bar{\omega}$ for the complex conjugate of $\omega \in Z^3(G, \T)$, and we write
$\C_g$ for the  Hilbert space $\C$ viewed as as $G$-graded Hilbert space that is homogeneous of degree $g$. 
\begin{proposition}\label{prop:tensor-functor}
    Let $G$ be a group, $\omega \in Z^3(G, \T)$, and $A$ be a unital $\rm{C}^*$-algebra. Given an $\omega$-anomalous action $(\theta,u)$ of $G$, there exists a $\rm{C}^*$-tensor functor $(F, J):\Hilb_\C(G, \bar{\omega}) \rightarrow \Bim(A)$ such that
    \begin{align}
        F(\C_g) &= A_{\theta_g},\\
        J_{{\C_g},{\C_h}}(x \otimes y) &= x\theta_g(y)u_{g,h}^*,
    \end{align}
    which is fully faithful whenever $Z(A) = \C$ and $(\theta,u)$ is free. 
    
    Conversely, if $(F, J):\Hilb_\C(G, \bar{\omega}) \rightarrow \Bim(A)$ is such that for each $g\in G$, $F(\C_g) \cong A_{\theta_g}$ for some $\theta_g \in \Aut(G)$. Then there exists a $\omega$-anomalous action of $G$ on $A$. 
\end{proposition}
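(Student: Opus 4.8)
The plan is to treat the two directions separately. For the forward direction, I would define $F$ on a $G$-graded Hilbert space $X = \bigoplus_{g\in G} X_g$ by $F(X) = \bigoplus_{g\in G} X_g \otimes_\C A_{\theta_g}$, i.e.\ $\dim X_g$ copies of the bimodule $A_{\theta_g}$ of Example~\ref{ex:bimodules}, and on a grading-preserving map $f = \bigoplus_g f_g$ by $F(f) = \bigoplus_g f_g\otimes\id$; this is manifestly a $\rm{C}^*$-functor, and $F(\C_e) = A_{\theta_e}\cong A = 1_{\Bim(A)}$ because $\theta_e$ is inner (apply Definition~\ref{def:anomalous-action}(\ref{cond:1}) with $h = e$ to get $\theta_e = \Ad(u_{e,g}^*)$). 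The tensorator $J$ is assembled from two facts of Example~\ref{ex:bimodules}: the unitary $A_{\theta_g}\otimes A_{\theta_h}\to A_{\theta_g\theta_h}$, $x\otimes y\mapsto x\theta_g(y)$, and the observation --- checked directly against the bimodule operations --- that right multiplication $z\mapsto zu_{g,h}^*$ is a unitary $A_{\theta_g\theta_h}\to A_{\theta_{gh}}$, valid because $\theta_{gh} = \Ad(u_{g,h})\theta_g\theta_h$. Composing these and tensoring with the identity on the grading pieces gives a unitary natural isomorphism $J_{X,Y}\colon F(X)\otimes F(Y)\to F(X\otimes Y)$ with the stated component $x\otimes y\mapsto x\theta_g(y)u_{g,h}^*$ on $\C_g,\C_h$; naturality is automatic since all components are diagonal in the $G$-grading.

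The main computation is the coherence hexagon~\eqref{eqn:monodial-axiom}. Evaluating both composites on $(x\otimes y)\otimes z\in(F(\C_g)\otimes F(\C_h))\otimes F(\C_k)$, the left-bottom path produces $\bar\omega(g,h,k)\,x\theta_g(y)u_{g,h}^*\theta_{gh}(z)u_{gh,k}^*$ --- the scalar being $F$ applied to the associator of $\Hilb_\C(G,\bar\omega)$ --- which upon rewriting $u_{g,h}^*\theta_{gh}(z) = \theta_g\theta_h(z)u_{g,h}^*$ becomes $\bar\omega(g,h,k)\,x\theta_g(y)\theta_g\theta_h(z)u_{g,h}^*u_{gh,k}^*$, while the top-right path (the associator in $\Bim(A)$ being the scalar-free associator of interior tensor products) produces $x\theta_g(y)\theta_g\theta_h(z)\,\theta_g(u_{h,k})^*u_{g,hk}^*$. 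Equating these and taking adjoints reduces the hexagon to precisely Definition~\ref{def:anomalous-action}(\ref{cond:2}), the cocycle identity $\omega(g,h,k)u_{gh,k}u_{g,h} = u_{g,hk}\theta_g(u_{h,k})$; hence $(F,J)$ is a $\rm{C}^*$-tensor functor.

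For full faithfulness when $Z(A) = \C$ and $(\theta,u)$ is free, since every object is a finite direct sum of the $\C_g$ and $F$ preserves direct sums, it suffices to show $\Hom(\C_g,\C_h)\to\Hom(A_{\theta_g},A_{\theta_h})$ is bijective on simple objects. For $g = h$ both sides equal $\C$ by Example~\ref{ex:bimodules} (since $\Hom(A_{\theta_g},A_{\theta_g}) = Z(A) = \C$) and the map is the identity; for $g\neq h$ both vanish, since a nonzero morphism $A_{\theta_g}\to A_{\theta_h}$ would, using $Z(A) = \C$, force $\theta_g\theta_h^{-1}\in\Inn(A)$, contradicting freeness of the induced $G$-kernel $\bar\theta\colon G\to\Out(A)$, which is a homomorphism by Definition~\ref{def:anomalous-action}(\ref{cond:1}).

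For the converse, choose unitary isomorphisms $w_g\colon F(\C_g)\to A_{\theta_g}$ (with $\theta_e = \id$, so that $F(1_\mathcal{C})\cong 1_{\Bim(A)}$ is respected). Transporting $J_{\C_g,\C_h}$ through the $w$'s and the canonical isomorphism $A_{\theta_g}\otimes A_{\theta_h}\cong A_{\theta_g\theta_h}$ gives a unitary $A_{\theta_g\theta_h}\to A_{\theta_{gh}}$, which by Example~\ref{ex:bimodules} is right multiplication by a unitary $u_{g,h}^*\in A$ with $\theta_{gh} = \Ad(u_{g,h})\theta_g\theta_h$; this is Definition~\ref{def:anomalous-action}(\ref{cond:1}). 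Feeding $(\theta_g, u_{g,h})$ back into~\eqref{eqn:monodial-axiom} for $(F,J)$ and running the element-tracking of the second paragraph in reverse yields Definition~\ref{def:anomalous-action}(\ref{cond:2}) with the cocycle $\omega$, so $(\theta,u)$ is an $\omega$-anomalous action. The step I expect to be most delicate is this last one: one must make the choices of $\theta_g$ and $w_g$ coherently so that the extracted $2$-cochain lands in $U(A)$ and satisfies~(\ref{cond:2}) for exactly $\omega$ rather than a cohomologous representative --- the same computation through interior tensor products that drives the forward hexagon is what makes this bookkeeping go through.
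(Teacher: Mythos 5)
Your proposal is correct and follows essentially the same route as the paper's proof: the same definition of $F$ and $J$ on graded pieces, the same reduction of the coherence hexagon to the adjointed form of Definition~\ref{def:anomalous-action}(\ref{cond:2}), the same use of Example~\ref{ex:bimodules} for full faithfulness, and the same extraction of $(\theta,u)$ from $J_{\C_g,\C_h}$ in the converse. Your explicit element-tracking of the hexagon and the remark on $F(\C_e)\cong 1_{\Bim(A)}$ are just slightly more detailed versions of steps the paper leaves implicit, and the bookkeeping worry at the end is resolved exactly as you suggest, since the associator of $\Hilb_\C(G,\bar{\omega})$ is literally multiplication by $\bar{\omega}(g,h,k)$, so the hexagon forces the cocycle on the nose rather than up to coboundary.
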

\begin{proof}
    We define $F$ on a general object $X = \bigoplus_{g \in G} X_g$ of $\Hilb_\C(G, \bar{\omega})$ by $F(X) = \bigoplus_{g \in G} X_g \otimes_{\C} A_{\theta}$. On a general morphisms $f = \bigoplus_{g \in G} f_g \in \Hom(X,Y)$, we define $F(f) = \bigoplus_{g \in G} f_g \otimes_\C \id_{A_{\theta_g}}$.
    
    Since $(\theta,u)$ is an $\omega$-anomalous action, after taking adjoints, we have
    \begin{align}
        u_{g,h}^*\theta_{gh}(a) &= \theta_g(\theta_h(a))u_{g,h}^*,\label{eqn:anomalous-adjoint-1}\\
        \bar{\omega}(g,h,k)u_{g,h}^*u_{gh,k}^* &= \theta_g(u_{h,k}^*)u_{g,hk}^*\label{eqn:anomalous-adjoint-2},
    \end{align}
    for $a \in A$ and $g,h,k \in G$. 
    It follows that $J_{{\C_g},{\C_h}}(x \otimes y) = x\theta_g(y)u_{g,h}^*$ defines a unitary isomorphism of bimodules $A_{\theta_g} \otimes A_{\theta_h} \cong A_{\theta_{gh}}$, and the monoidal structure axiom \eqref{eqn:monodial-axiom} holds when $X = \C_g$, $Y=\C_h, Z=\C_k$. 
    The definition of the tensorator $J_{X,Y}$ for general objects $X = \bigoplus_{g \in G} X_g$ and $Y = \bigoplus_{h \in G} Y_h$ is uniquely determined by naturality.  It has the form $J_{X,Y} = \bigoplus_{g,h \in G} \id_{X_g} \otimes_\C \id_{Y_h} \otimes_\C \, J_{\C_g, \C_h}$. The monoidal structure axiom remains valid by naturality.
    
    Suppose $Z(A) = \C$ and $(\theta,u)$ is a free anomalous action.  Then $\Hom(A_{\theta_g},A_{\theta_h}) = 0$ whenever $g \neq h$ and $\Hom(A_{\theta_g},A_{\theta_g}) \cong \C$; see Example \ref{ex:bimodules}. It follows that $F$ is fully faithful.
    
    For the converse, suppose $(F, J):\Hilb_\C(G, \bar{\omega}) \rightarrow \Bim(A)$ is a $\rm{C}^*$-tensor functor and for all $g\in G$, $F(\C_g)$ is unitary isomorphic to $A_{\theta_g}$ for some $\theta_g \in \Aut(G)$. Let $g, h \in G$. Then
    \begin{align}
        A_{\theta_{gh}} \cong  F(\C_g \otimes \C_h) \cong F(\C_g) \otimes F(\C_h) \cong A_{\theta_{g}\theta_h}.
    \end{align} 
    This unitary isomorphism of bimodules must be implemented by right multiplication by a unitary $u_{g,h}^* \in U(A)$ satisfying \eqref{eqn:anomalous-adjoint-1}. Moreover, \eqref{eqn:anomalous-adjoint-2} holds since $F$ and $J$ satisfy the monoidal structure axiom \eqref{eqn:monodial-axiom}. Hence, $(\theta,u)$ is an $\omega$-anomalous action of $G$ on $A$.
\end{proof}

\begin{remark}\label{rem:why-omega-bar}
    The fact that an $\omega$-anomalous action corresponds to a functor from $\Hilb_\C(G, \bar{\omega})$ instead of $\Hilb_\C(G, \omega)$ results from a discrepancy of conventions between \cite{BRGRRI77} and \cite{JO20}. One way to resolve this issue is to reverse the order of the tensor product in $\Bim(A)$ and work with bimodules of the form ${}_{\theta_g} A_{\id}$. This is the approach used in \cite{BMZ13}.
\end{remark}

In order to access the converse of Proposition \ref{prop:tensor-functor}, we need a lemma for determining when an $A$-$A$ bimodule is unitary isomorphic to one of the form $A_\theta$ for some $\theta \in \Aut(A)$. To this end we recall that a object $X$ in a $\rm{C}^*$-tensor category $\mathcal{C}$ is \emph{invertible} if there exists an object $Y \in \mathcal{C}$ with $X \otimes Y \cong 1_\mathcal{C} \cong Y \otimes X$. We shall say that $X$ has \emph{finite order} if $X^{\otimes N} \cong 1_\mathcal{C}$ for some $N \in \N$.

\begin{lemma}\label{lem:invertibles}
Let $A$ be a unital, simple $C^*$-algebra with the cancellation property. 
If $\Aut(K_0(A),K_0(A)_+)$ is trivial, then every invertible element $X \in \Bim(A)$ is unitary isomorphic to a bimodule of the form $A_{\theta}$ for some $\theta\in\Aut(A)$. 

Similarly, if $\Aut(K_0(A),K_0(A)_+)$ has no non-trivial elements of finite order, then every invertible element $X \in \Bim(A)$ of finite order is unitary isomorphic to a bimodule of the form $A_{\theta}$ for some $\theta\in\Aut(A)$.
\end{lemma}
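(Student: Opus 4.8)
The plan is to reduce everything to a statement about $K_0(A)$. The first step is to recognise that an invertible object $X$ of $\Bim(A)$ is an $A$-$A$ imprimitivity bimodule in the sense of \cite{BRGRRI77}. Concretely, choosing an isomorphism $X \otimes_A X^{-1} \cong A$ and writing $1_A$ as a finite sum $\sum_i x_i \otimes y_i$, one obtains — using that the left action $A \to \mathcal{L}(X)$ is unital — that $\id_X \in \mathcal{K}(X)$, and hence that $\mathcal{K}(X) = \mathcal{L}(X)$ and the left action is a $*$-isomorphism onto $\mathcal{K}(X)$. In particular $X$ is finitely generated projective as a right Hilbert $A$-module, so it determines a class $[X] \in K_0(A)_+$. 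I would then record the elementary dictionary attached to Example \ref{ex:bimodules}: transporting the standard left action of $A_\theta$ along the right-Hilbert-module isomorphism $A_\theta \cong A_A$, $x \mapsto \theta^{-1}(x)$, presents $A_\theta$ as $A_A$ equipped with the left action $a \mapsto \theta^{-1}(a)$; conversely, if $X$ is isomorphic, as a right Hilbert $A$-module, to $A_A$, and the left action of $X$ then corresponds to $\psi \colon A \to \mathcal{L}(A_A) \cong A$, then $X \cong A_{\psi^{-1}}$ in $\Bim(A)$ \emph{provided} $\psi$ is an automorphism. This is the one point where the imprimitivity structure is essential: the left action of an imprimitivity bimodule is surjective onto $\mathcal{K}(X) \cong \mathcal{K}(A_A) \cong A$, so $\psi$ is surjective (it is injective since $A$ is simple), hence $\psi \in \Aut(A)$. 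Thus $X$ is unitarily isomorphic to some $A_\theta$ if and only if $X \cong A_A$ as right Hilbert $A$-modules, which by the cancellation property holds if and only if $[X] = [1_A]$ in $K_0(A)$.

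The second ingredient is the standard action of the group of invertible objects of $\Bim(A)$ on $K_0(A)$: for $X$ invertible and $P$ a finitely generated projective right Hilbert $A$-module, $P \otimes_A X$ is again finitely generated projective (because $X$ is), and $\beta_X \colon [P] \mapsto [P \otimes_A X]$ defines an element of $\Aut(K_0(A), K_0(A)_+)$ with $\beta_{X \otimes_A Y} = \beta_Y \circ \beta_X$ and $\beta_{1_{\Bim(A)}} = \id$. The only feature I need is $\beta_X([1_A]) = [A_A \otimes_A X] = [X]$.

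Given these, the two cases are immediate. If $\Aut(K_0(A), K_0(A)_+)$ is trivial, then $\beta_X = \id$, so $[X] = \beta_X([1_A]) = [1_A]$; hence $X \cong A_A$ as right Hilbert $A$-modules, and therefore $X \cong A_\theta$ for a suitable $\theta \in \Aut(A)$. If instead $\Aut(K_0(A), K_0(A)_+)$ has no non-trivial element of finite order and $X^{\otimes N} \cong 1_{\Bim(A)}$ for some $N$, then $(\beta_X)^N = \beta_{X^{\otimes N}} = \id$, so $\beta_X$ has finite order and is therefore trivial; one concludes exactly as in the first case.

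The step I expect to need genuine care, as opposed to bookkeeping, is the first one — verifying that an invertible object of $\Bim(A)$ is an imprimitivity bimodule, in particular finitely generated projective as a right Hilbert $A$-module with left action onto $\mathcal{K}(X)$. This is precisely what makes the class $[X] \in K_0(A)$ available and what forces the transported left action to be an automorphism of $A$ rather than merely a unital endomorphism. Everything downstream (the homomorphism to $\Aut(K_0(A), K_0(A)_+)$ and the two diagram-free case distinctions) is then routine, the cancellation property doing the remaining work; simplicity of $A$ enters only lightly, to guarantee faithfulness of the left actions in play.
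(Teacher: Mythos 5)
Your argument is correct, but it takes a genuinely different route from the paper's. The paper also begins by invoking \cite[Lemma 2.4]{EKQR06} to see that an invertible object of $\Bim(A)$ is a self-Morita equivalence, but then \emph{stabilises}: it conjugates $X$ by the standard $A$--$(A\otimes\K)$ equivalence, applies \cite[Corollary 3.5]{BRGRRI77} to write the resulting self-equivalence of $A\otimes\K$ as $(A\otimes\K)_\theta$, uses triviality (or finite order) of $\theta_*$ on $K_0$ together with cancellation to build a unitary $u\in M(A\otimes\K)$ with $\Ad(u^*)\theta$ fixing $1\otimes\K$, and so destabilises to $\theta'\otimes\id_\K$ and $X\cong A_{\theta'}$. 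You instead stay with the unital algebra throughout: you observe that an imprimitivity bimodule over a unital $A$ is finitely generated projective as a right module, so carries a class $[X]\in K_0(A)_+$; that $X\cong A_\theta$ for some $\theta\in\Aut(A)$ precisely when $X\cong A_A$ as right Hilbert modules (the transported left action being automatically an automorphism because it is surjective onto $\mathcal{K}(A_A)\cong A$ and injective by simplicity); and that $[X]=\beta_X([1_A])=[1_A]$ once the order automorphism $\beta_X$ is forced to be trivial, whence cancellation gives $X\cong A_A$. Your approach buys a shorter, purely $K_0$-theoretic argument that avoids Brown--Green--Rieffel and the strictly convergent series of matrix units; the paper's approach buys an off-the-shelf structural description of the bimodule before any $K$-theory is used. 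One small point of hygiene in your write-up: a general element of the completed internal tensor product $X\otimes_A X^{-1}$ need not be a finite sum of elementary tensors, so you should approximate $1_A$ by such a sum to within distance $1$ and use invertibility of the approximant to conclude $\id_X\in\mathcal{K}(X)$; this is exactly the content of \cite[Lemma 2.4]{EKQR06}, which you could simply cite as the paper does.
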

\begin{proof}
Let $X \in \Bim(A)$ be an invertible bimodule. Then $X$ is a self-Morita equivalence of $A$ by \cite[Lemma 2.4]{EKQR06}. Let $H = {}_A\ell^2(A)_{A \otimes \K}$ be the standard Morita equivalence between $A$ and its stabilisation $A \otimes \K$. Then $Y = \bar{H} \otimes X \otimes H$ is a self-Morita equivalence of $A \otimes \K$. 
As $A\otimes\mathbb{K}$ is stable and $\sigma$-unital, $Y\cong (A\otimes\mathbb{K})_\theta$ for some $\theta\in\Aut(A\otimes\mathbb{K})$ by \cite[Corollary 3.5]{BRGRRI77}. Let $\theta_* \in \Aut(K_0(A), K_0(A)_+)$ be the induced automorphism. 

Suppose $\Aut(K_0(A), K_0(A)_+)$ is trivial. Then $\theta_* = \id_{K_0(A)}$. Therefore, $[\theta(1_A\otimes e_{11})]_{K_0(A)} = [1_A\otimes e_{11}]_{K_0(A)}$. Since $A$ has the cancellation property, there exists a partial isometry $v\in A\otimes\mathbb{K}$ with $v^*v=1\otimes e_{11}$ and $vv^*=\theta(1\otimes e_{11})$. The series
\begin{equation}
    u=\sum_{i=1}^{\infty} \theta(1\otimes e_{i1})v(1 \otimes e_{1i})
\end{equation}
converges in the strict topology on $M(A \otimes \K)$ and defines a unitary $u\in U(M(A\otimes\K))$ such that $\theta(1 \otimes e_{ij})=\Ad(u)(1 \otimes e_{ij})$ for all $i,j \in \N$. It follows that $\Ad(u^*)\theta$ fixes $1 \otimes \K$, and so is of the form $\theta' \otimes \id_\K$ for some $\theta' \in \Aut(A)$. Hence, $Y \cong (A\otimes\mathbb{K})_{\theta' \otimes \id_\K}$, and we have $X \cong H \otimes Y \otimes \bar{H} \cong A_{\theta'}$, as required. 

If $X^{\otimes N} \cong 1_{\Bim(A)}$, then in the above argument $\theta^N$ is an inner automorphism, so $\theta_*^N = \id_{K_0(A)}$. Hence, in this case, it suffice to know that $\Aut(K_0(A), K_0(A)_+)$ has no non-trivial elements of finite order.
\end{proof}

We are now ready to prove Corollary \ref{cor:UTC-JiangSu} and Corollary \ref{cor:UTC-UHF}.
\begin{proof}[Proof of Corollary \ref{cor:UTC-JiangSu}]
Suppose there exists a $\rm{C}^*$-tensor functor $(F,J):\Hilb_\C(G, \omega) \rightarrow \Bim(\Z)$. As $K_0(\Z) = \mathbb{Z}$ with its usual order, there are no non-trivial elements of $\Aut(K_0(\Z),K_0(\Z)_+)$. Moreover, $\Z$ is simple, unital and has the cancellation property by virtue of having stable rank one (see \cite[Proposition 6.5.1]{Bl98}). Hence, by Lemma \ref{lem:invertibles}, up to unitary isomorphism all invertible bimodules in $\Bim(\Z)$ are of the form $\Z_\theta$ for some $\theta \in \Aut(\Z)$. 

Since $(J,F)$ is a $\rm{C}^*$-tensor functor, we have that $F(\C_g)$ is invertible with inverse $F(\C_{g^{-1}})$ for each $g \in G$. 
We may therefore apply Proposition \ref{prop:tensor-functor} and deduce that there exists an $\bar{\omega}$-anomalous action of $G$ on $\Z$. By Theorem \ref{thm:JiangSu}, $[\bar{\omega}] = 0$ in $H^3(G, \T)$. Hence, $[\omega] = -[\bar{\omega}] = 0$.
\end{proof}
\begin{proof}[Proof of Corollary \ref{cor:UTC-UHF}]
Let $A = \bigotimes_{k \in \N}  \M_{n_k}$ be a UHF algebra with supernatural number $\mathfrak{n} = \prod_{k \in \N} n_k$. Let $G$ be a finite group and $\omega \in Z^3(G,\T)$.
Let $(F,J):\Hilb_\C(G, \omega) \rightarrow \Bim(A)$ be $\rm{C}^*$-tensor functor.

The ordered group $K_0(A)$ is isomorphic to the subgroup $Q(\mathfrak{n}) \subseteq \mathbb{Q}$ generated by $\{\tfrac{1}{n}: n \in \N, n \, | \, \mathfrak{n}\}$ with the order inherited from $\mathbb{Q}$. Since $\mathbb{Q}$ is uniquely divisible, any automorphism of $Q(\mathfrak{n})$ is determined by the image of $1$. It follows that the only automorphism of $Q(\mathfrak{n})$ with finite order is multiplication by -1, which doesn't preserve the order structure. Hence, $\Aut(K_0(A), K_0(A)_+)$ has no non-trivial elements of finite order. Since UHF algebras are simple, unital and have the cancellation property, we may use Lemma \ref{lem:invertibles} to deduce that, up to unitary isomorphism, all invertible bimodules of finite order in $\Bim(A)$ are of the form $A_\theta$ for some $\theta \in \Aut(A)$.

Since $(J,F)$ is a $\rm{C}^*$-tensor functor, we have that $F(\C_g)$ is invertible with inverse $F(\C_{g^{-1}})$ for each $g \in G$. Moreover, as $G$ is finite, $F(\C_g)$ has finite order for each $g \in G$. 
We may therefore apply Proposition \ref{prop:tensor-functor} and deduce that there exists an $\bar{\omega}$-anomalous action of $G$ on $A$. Since $[\bar{\omega}] = -[\omega]$ in $H^3(G,\T)$, they have the same order. Therefore, by Theorem \ref{thm:UHF}, if $r$ is the order of $[\omega]$ then $r^\infty$ divides the supernatural number $\mathfrak{n}$.
\end{proof}

\end{document}